\documentclass[11pt,a4paper]{amsart}

\usepackage[textsize=tiny,textwidth=0.5in]{todonotes}
\usepackage{xcolor}
\usepackage{amsmath}
\usepackage{mathtools}
\usepackage{amsthm}
\usepackage{amscd}
\usepackage{stmaryrd}
\usepackage{amssymb}
\usepackage{enumerate}
\usepackage{color}
\usepackage{amssymb}
\usepackage{mathrsfs}
\usepackage[colorlinks={true},linkcolor={black},citecolor={black}, urlcolor={violet} ]{hyperref}
\usepackage[capitalize,nameinlink,noabbrev]{cleveref}
\usepackage{tikz-cd}

\numberwithin{equation}{section}

\newtheorem{theorem}[equation]{Theorem}
\newtheorem{proposition}[equation]{Proposition}
\newtheorem{lemma}[equation]{Lemma}
\newtheorem{corollary}[equation]{Corollary}

\newtheorem{sublemma}[equation]{Sublemma}

\theoremstyle{definition}
\newtheorem{definition}[equation]{Definition}
\newtheorem{remark}[equation]{Remark}
\newtheorem*{remark*}{Remark}
\newtheorem{example}[equation]{Example}

\newcommand{\p}{\mathfrak{p}}
\newcommand{\calO}{\mathcal{O}}
\DeclareMathOperator{\Spec}{Spec}
\DeclareMathOperator{\Sp}{Sp}
\DeclareMathOperator{\Hom}{Hom}
\DeclareMathOperator{\rad}{rad}

\newcommand{\bD}{\widetilde{D}}
\newcommand{\wA}[1]{{S(#1)}}

  

\title[Scheme Theory for commutative semirings]{Scheme Theory for commutative semirings}


\author[R. Gualdi]{Roberto Gualdi}
\address{\rm Roberto Gualdi, Departament de Matem\`atiques, Universitat Polit\`ecnica de Catalunya, 08034 Barcelona, Spain}
\email{roberto.gualdi@upc.edu}

\author[A. Kuhrs]{Arne Kuhrs}
\address{\rm Arne Kuhrs, Institut f\"ur Mathematik, Universit\"at Paderborn, 33098 Paderborn, Germany}
\email{kuhrs@math.uni-paderborn.de}

\author[M. Mayo]{Mayo Mayo Garcia}
\address{\rm Mayo Mayo Garcia, University of Warwick, Coventry CV4 7AL, United Kingdom }
\email{mayo.mayo-garcia@warwick.ac.uk}

\author[X. Xarles]{Xavier Xarles}
\address{\rm Xavier Xarles, Departament de Matem\`atiques\\Universitat Aut\`onoma de
		Barcelona\\08193 Bellaterra, Barcelona, Catalonia}
\email{xavier.xarles@uab.cat}


\begin{document}

\begin{abstract}
In this survey, we describe two different approaches to constructing affine schemes for commutative semirings: one based on prime ideals, and another based on prime kernels (also called subtractive ideals).

We then explain how these two approaches are related through the theory of universal valuations.
\end{abstract}

\maketitle

\setcounter{tocdepth}{1}
\tableofcontents

\section*{Introduction}

The main idea in these pages is to discuss possible definitions for the affine scheme associated to a commutative semiring, in analogy with the usual functorial construction from algebraic geometry, see for example ~\cite{Hartshorne} or \cite{stacks-project}.
In practice, one would like to associate to a semiring $A$ a semiringed space~$(X,\mathcal{O}_X)$, which should be considered as its geometric realization.

However, unlike the case of rings, the construction presents several difficulties.
In fact, already at the level of the underlying topological space~$X$, one is faced with a choice: it could be defined as the set of all prime ideals of $A$ or instead one could restrict to the collection of its kernels (see \cref{def: substractive ideal}).
The resulting topological spaces are denoted by $\Spec A$ and $\Sp A$, respectively, and they can be substantially different; see, for instance, \cref{ex: Sp and Spec of N,ex: Sp and Spec of B[x]} and the related \cref{rem: dimension of semiring spectrum}.

Having made a choice for~$X$, the problem of defining a sheaf of semirings $\mathcal{O}_X$ on it is also nontrivial.
In fact, while the construction works as expected for~$X=\Spec A$ and its local sections are easily computable on a basis for the topology, in the case of $X=\Sp A$ some new obstructions appear. 

Relevantly, the approach using all prime ideals yields a category of affine semischemes that is equivalent to the opposite category of commutative semirings (\cref{prop: category Spec is equivalent}).
On the other hand, we do not know whether the category of affine schemes obtained only using kernels is equivalent to some meaningful subcategory of commutative semirings.
We suggest this could be the category of the so-called \emph{global semirings} studied in \cref{subs: Global}.


\vspace{\baselineskip}

The notes are organized as follows.
In \cref{sec: basic definitions} we recall several basic definitions and properties about semirings and their ideals.
We study the two possibilities for the construction of the topological space $X$ associated to a semiring $A$ in \cref{sec: spectrum constructions}.

Then, we move to the definition of the structure sheaf.
In \cref{sec: structure sheaf for Spec} we give three different definitions for $\mathcal{O}_X$ on~$X=\Spec A$ and show that they are equivalent.
In \cref{sec: structure sheaf for Sp} we study the analogous problem for~$X=\Sp A$.

Finally, in \cref{sec:universalGval}, we study $G$-valuations, which highly generalize usual valuations to the context of semirings and give an unexpected link between the two scheme theories.

\vspace{\baselineskip}

\textbf{Acknowledgments.}
These notes are a transcription and expansion of a minicourse delivered by the fourth author in occasion of the conference \textit{Geometry over Semirings} held at the Universitat Autònoma de Barcelona in July 2025.
We thank the organizers of the conference, Marc Masdeu and Joaquim Roé, for creating a friendly and inspiring atmosphere for work and exchange. 

The last author also thanks the participants for numerous comments and suggestions, and specially Oliver Lorscheid for some references, and Stefano Mereta for pointing out the paper \cite{BarilBoudreau_Garay} and for answering some questions.

Roberto Gualdi was partially supported by the MICINN research project PID2023-147642NB-I00 and the AGAUR research project 2021-SGR-00603.

Arne Kuhrs has received funding from the Deutsche Forschungsgemeinschaft (DFG, German Research Foundation) TRR 358 \emph{Integral Structures in Geometry and Representation Theory}, project number 491392403, as well as from the Deutsche Forschungsgemeinschaft (DFG, German Research Foundation) Sachbeihilfe \emph{From Riemann surfaces to tropical curves (and back again)}, project number 456557832 and the DFG Sachbeihilfe \emph{Rethinking tropical linear algebra: Buildings, bimatroids, and applications}, project number 539867663, within the
SPP 2458 \emph{Combinatorial Synergies}.

Mayo Mayo García is supported by the Warwick Mathematics Institute Centre for Doctoral Training, and gratefully acknowledges funding from the University of Warwick.

Xavier Xarles is partially supported by the Spanish State Research Agency MICIU/ AEI / 10.13039/501100011033 (Projects PID2020-116542GB-I00 and PID2024-159095NB-I00).

\section{Semirings and their ideals}\label{sec: basic definitions}

In this section, we recall the basics of the objects playing a role in the article.
In particular, we revise the definition of a semiring and its ideals.
We then focus on a specific class of ideals in a semiring, namely kernels, and discuss their features.
Finally, we compare the properties of invertibility and semi-invertibility in a semiring and introduce the notion of hardness. More details on the theory of semirings and semimodules can be found in \cite{Golan1999}.

\subsection{Semirings and localizations}

A semiring is nothing else than a ring without the requirement for additive inverses, the precise definition being as follows.

\begin{definition}
A \emph{(commutative) semiring} is a set $A$ with two binary operations $+$ and $\cdot$ such that both $(A,+)$ and $(A,\cdot)$ are commutative monoids with identity ($0_A$ and $1_A$ respectively), satisfying the distributivity of the product with respect to addition, and the property $a\cdot 0_A=0_A$ for all~$a\in A$.

A semiring $A$ is called \emph{idempotent} if $a+a=a$ for all $a\in A$.
\end{definition}

Notice that, in virtue of the distributive property, a semiring is idempotent if and only if~$1_A+1_A=1_A$.
Moreover, if $A$ is an idempotent semiring, it naturally carries a partial order defined by $a\preceq b$ whenever~$a+b=b$.

\begin{example}\label{ex: semirings}
Many algebraic structures from different parts of mathematics are actually semirings.
For instance:
\begin{enumerate}
    \item Every commutative ring with unity is a semiring. It is idempotent if and only if it is the trivial ring.
    \item The set of natural numbers $\mathbb{N}=\{0,1,\ldots\}$ with the usual addition and multiplication is a semiring; it is not idempotent.
    \item The logic triple $\mathbb{B}=(\{0,1\},\lor,\land)$ or more generally any \emph{boolean algebra}, see for instance \cite[Section~4.26]{Davey_Priestley} for the precise definition, is an idempotent semiring.
    \item The \emph{tropical semiring}
    \[
    \mathbb{T}=(\mathbb{R}\cup\{+\infty\},\min,+).
    \]
    is an idempotent semiring, which is the main motivation for these notes and at the heart of current trends in algebraic geometry. We also refer the reader to \cite{Lorscheid_this_volume} and \cite{Giansiracusa_this_volume} for different approaches to tropical geometry.
    \item If $A$ is a semiring, the set $A[x_1,\ldots,x_n]$ of polynomials in $n$ variables with coefficients in $A$ is naturally equipped with a semiring structure. The polynomial semiring $A[x_1,\ldots,x_n]$ is idempotent if and only if $A$ is.
\end{enumerate}

\end{example}

Basic notions from ring theory are naturally translated to the setting of semirings. For example, the theory of modules is essentially the same (except for some results concerning the quotient by submodules; see for example \cref{sec: subtractive} for the case of submodules of the semiring). 

Given two semirings $A$ and~$B$, a \emph{homomorphism} $f\colon A\to B$ is a map preserving addition and multiplication, and such that $f(0_A)=0_B$ and~$f(1_A)=1_B$. An \emph{isomorphism} of semirings is a semiring homomorphism that is both injective and surjective. However, this is not equivalent to the morphism being surjective and having a kernel equal to 0 (consider, for example, $f\colon(\{0, 0.5,1\}, \max, \min)\rightarrow (\{0,1\}, \max, \min)$ with $f(0)=0$ and $f(1)=f(0.5)=1$).

Moreover, an element $a$ of a semiring $A$ is called \emph{invertible} if it admits a multiplicative inverse.
In case it exists, this element is unique and is denoted by~$a^{-1}$.
A semiring in which all nonzero elements are invertible is called a \emph{semifield}. Note that the subset of invertible elements $A^{\times}$ is a multiplicative subgroup of~$A$. 

\begin{example}
The boolean domain $\mathbb{B}$ and, more interestingly, the tropical semiring $\mathbb{T}$ in \cref{ex: semirings} are idempotent semifields.
\end{example}

There is a well-defined process of \emph{localization} for semirings.
If $A$ is a semiring and $S\subseteq A$ is a multiplicative submonoid, then we set
\[
S^{-1}A=\Big\{\frac{a}{s}\Bigm| a\in A,s\in S\Big\},
\]
where two fractions $a_1/s_1$ and $a_2/s_2$ are identified if and only if there exists $u\in S$ such that~$a_1s_2u=a_2s_1u$.
Analogously to the case of rings, $S^{-1}A$ inherits from $A$ the structure of a semiring, and it comes with a natural homomorphism $\varphi_S\colon A\to S^{-1}A$ that fulfills the usual universal property. 

Finally, a multiplicative submonoid $S\subseteq A$ is said to be \emph{saturated} if it verifies that for any $a,b$ in $A$ with~$ab\in S$, then $a \in S$ and~$b\in S$. 
For an arbitrary multiplicative submonoid~$S\subseteq A$, we call
\[
S^{\mathrm{sat}}=\{b\in A \mid \exists c\in A \text{ such that } b c\in S\}
\]
the \emph{saturation} of~$S$.
Its name is justified by the following elementary lemma.

\begin{lemma}\label{lem: Saturation}
Let $S\subseteq A$ be a multiplicative submonoid.
Then $S^{\mathrm{sat}}$ is the smallest saturated multiplicative submonoid of $A$ that contains~$S$.
Moreover,
\[S^{\mathrm{sat}} = \{b\in A \mid \varphi_S(b) \in (S^{-1}A)^{\times} \}\] 
and the natural morphism $(S^{\mathrm{sat}})^{-1}A \to S^{-1}A$ obtained from the universal property of the localization at $S^{\mathrm{sat}}$ is an isomorphism.
\end{lemma}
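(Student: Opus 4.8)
The plan is to break the statement into three parts and handle them in order: (1) $S^{\mathrm{sat}}$ is a saturated multiplicative submonoid containing $S$; (2) it is the smallest such; (3) the two descriptions of $S^{\mathrm{sat}}$ coincide and the induced localization map is an isomorphism.

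For (1), I would first check that $S^{\mathrm{sat}}$ is a multiplicative submonoid: it contains $1_A$ (take $c=1_A$, since $1_A\cdot 1_A=1_A\in S$ as $S$ is a submonoid), it contains $S$ (for $b\in S$ take $c=1_A$), and it is closed under products — if $b_1c_1\in S$ and $b_2c_2\in S$ then $(b_1b_2)(c_1c_2)=(b_1c_1)(b_2c_2)\in S$ by multiplicativity of $S$. For saturation, suppose $ab\in S^{\mathrm{sat}}$, so there is $c\in A$ with $(ab)c\in S$; then $a\cdot(bc)\in S$ shows $a\in S^{\mathrm{sat}}$, and symmetrically $b\in S^{\mathrm{sat}}$. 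For (2), let $T$ be any saturated multiplicative submonoid with $S\subseteq T$; if $b\in S^{\mathrm{sat}}$, pick $c$ with $bc\in S\subseteq T$, and saturation of $T$ forces $b\in T$. Hence $S^{\mathrm{sat}}\subseteq T$.

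For the middle identity $S^{\mathrm{sat}}=\{b\in A\mid\varphi_S(b)\in(S^{-1}A)^\times\}$, I would argue both inclusions. If $b\in S^{\mathrm{sat}}$ with $bc\in S$, then $\varphi_S(b)$ has inverse $c/(bc)$ in $S^{-1}A$, since $\varphi_S(b)\cdot\frac{c}{bc}=\frac{bc}{bc}=\frac{1}{1}$. Conversely, if $\varphi_S(b)$ is invertible, its inverse is some $a/s$ with $a\in A$, $s\in S$; then $\frac{ba}{s}=\frac{1}{1}$ in $S^{-1}A$ means there is $u\in S$ with $bau\cdot 1=s\cdot u$, i.e. $b\cdot(au)=su\in S$, so $b\in S^{\mathrm{sat}}$ witnessed by $c=au$. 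This step is where I expect the only mild friction, because one must unwind the equivalence relation defining $S^{-1}A$ (the extra factor $u\in S$) carefully; everything else is essentially formal monoid bookkeeping.

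Finally, for the isomorphism $(S^{\mathrm{sat}})^{-1}A\to S^{-1}A$, I would invoke the universal property of localization twice to produce mutually inverse maps. The natural map $\varphi_S\colon A\to S^{-1}A$ sends every element of $S^{\mathrm{sat}}$ to a unit by the identity just proved, so by the universal property of localization at $S^{\mathrm{sat}}$ it factors uniquely through a homomorphism $\psi\colon(S^{\mathrm{sat}})^{-1}A\to S^{-1}A$. In the other direction, $\varphi_{S^{\mathrm{sat}}}\colon A\to(S^{\mathrm{sat}})^{-1}A$ sends elements of $S\subseteq S^{\mathrm{sat}}$ to units, so it factors through $\chi\colon S^{-1}A\to(S^{\mathrm{sat}})^{-1}A$. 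The composites $\psi\circ\chi$ and $\chi\circ\psi$ agree with the respective identity maps after precomposition with the structure maps from $A$, hence by the uniqueness clauses in the universal properties they are the identities. Thus $\psi$ is an isomorphism, completing the proof.
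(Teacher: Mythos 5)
Your proposal is correct and follows essentially the same route as the paper: the same monoid bookkeeping for saturation and minimality, and the same unwinding of the localization equivalence relation (the extra factor $u\in S$) for the unit criterion. The only cosmetic difference is in the last step, where you obtain the inverse of $(S^{\mathrm{sat}})^{-1}A\to S^{-1}A$ abstractly from two applications of the universal property and its uniqueness clause, while the paper writes both maps down explicitly ($a/b\mapsto ac/bc$ with $bc\in S$, and $a/s\mapsto a/s$) and checks directly that they are mutually inverse; both verifications are equally routine.
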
  

\begin{proof}
We first show that $S^{\mathrm{sat}}$ is a multiplicative submonoid of~$A$.
Since $1_A\in S$ we have $1_A\in S^{\mathrm{sat}}$.
Moreover, if $b_1,b_2\in S^{\mathrm{sat}}$, choose $c_1,c_2\in A$ such that $b_1c_1,b_2c_2\in S$. Then $(b_1b_2)(c_1c_2)= (b_1c_1)(b_2c_2)\in S$. 

Further, $S^{\mathrm{sat}}$ is saturated. Indeed, suppose $ab\in S^{\mathrm{sat}}$.  Then there exists $c\in A$ such that $(ab)c\in S$. 
If we set $c'=bc$, then $a c'\in S$, hence $a\in S^{\mathrm{sat}}$. Similarly, $b\in S^{\mathrm{sat}}$. 

To see minimality, let $T\subseteq A$ be any saturated multiplicative submonoid containing~$S$.
Let $b\in S^{\mathrm{sat}}$, then there exists $c \in A$ with $bc\in S\subseteq T$. Since $T$ is saturated it follows that~$b\in T$.
Hence~$S^{\mathrm{sat}}\subseteq T$. 

Now, if $b\in S^{\mathrm{sat}}$, there exists $c\in A$ such that $bc = s$ for some $s \in S$, so
\[
\varphi_S(b)\varphi_S(c)=\frac{bc}{1_A}=\frac{s}{1_A}
\]
which is invertible in $S^{-1}A$. Hence $\varphi_S(b)\in (S^{-1}A)^{\times}$. 
Conversely, suppose $b \in A$ with $\varphi_S(b)\in (S^{-1}A)^{\times}$. Then there exists $a/s\in S^{-1}A$ such that
\[
\frac{1_A}{1_A}=\varphi_S(b)\frac{a}{s}=\frac{ba}{s}.
\]
By definition, there exists a $u\in S$ with $u(ba)=us$. 
Since $b(ua) = us\in S$ we see that~$b\in S^{\mathrm{sat}}$. 

Finally, the natural morphism $(S^{\mathrm{sat}})^{-1}A \to S^{-1}A$ given by the universal property of localization is the one mapping each element $a/b$ to
\[\varphi_S(a)\varphi_S(b)^{-1}=\frac{ac}{bc},\]
where $c$ is any element in $A$ such that~$bc\in S$.
It is immediately verified that $S^{-1}A\to(S^{\mathrm{sat}})^{-1}A$ defined by $a/s\mapsto a/s$ is an inverse morphism.
\end{proof}

\subsection{Ideals and prime ideals}

Let $A$ denote a semiring.
As in the case of rings, it is clear how to define a semimodule over~$A$.

\begin{definition}
Let $A$ be a semiring.  
An \emph{$A$‑semimodule} is a commutative monoid $(M,+,0_M)$ together with a scalar multiplication
\[
A\times M \longrightarrow M, \qquad (a,m)\longmapsto a\cdot m,
\]
such that for all $a,b\in A$ and $m,n\in M$,
\begin{align*}
  & a\cdot (m+n)=a\cdot m+a\cdot n,\\
  & (a+b)\cdot m=a\cdot m+b\cdot m,\\
  & (a b)\cdot m=a\cdot(b\cdot m),\\
  & 1_A\cdot m=m,\qquad 0_A\cdot m=0_M,\qquad a\cdot0_M=0_M.
\end{align*}
A \emph{morphism of $A$‑semimodules} $f\colon M \to N$ is a  morphism of the underlying additive monoids
$f\colon M\to N$ satisfying $f(a\cdot m)=a\cdot f(m)$ for all $a\in A$, $m\in M$.
\end{definition}

\begin{definition}\label{def:subsemimodule}
An \emph{$A$-subsemimodule} of an $A$‑semimodule $M$ is a subset $N\subseteq M$ that contains $0_M$ and is closed under addition and scalar multiplication by~$A$.
\end{definition}

A particular instance of this will be at the center of our treatment.

\begin{definition}
An \emph{ideal} $I$ of $A$ is an $A$-subsemimodule of~$A$.
More explicitly, $I$ is a subset of $A$ containing $0_A$ and satisfying $I + I \subseteq I$ and~$A\cdot I\subseteq I$.

An ideal $I$ of $A$ is said to be \emph{prime} if $I \neq A$ and whenever $a,b\in A$ are such that~$a b\in I$, then $a\in I$ or~$b\in I$.
Equivalently, $I$ is prime if and only if $A\setminus I$ is a nonempty (saturated) multiplicative submonoid.  
\end{definition}

Obviously, these notions agree with the usual ones when the semiring $A$ is actually a ring.
On the contrary, in the general case we can have some more surprising examples for algebraic geometers.

\begin{example}\label{ex: a prime ideal of N}
    Let $\mathbb{N}$ be the semiring of natural numbers and consider
    \[
    I=\mathbb{N}\setminus\{1\}=\{0,2,3,\ldots\}.
    \]
    The set $I$ is closed under addition, it contains~$0$, and any natural multiple of an element of $I$ still belongs to~$I$.
    Moreover, if a natural number is different from $1$ then it has at least one nontrivial factor.
    It follows from all these observations that $I$ is a prime ideal of the semiring~$\mathbb{N}$. 
\end{example}

Given a finite number of elements $a_1,\ldots,a_r\in A$, the set
\[
\langle a_1,\ldots,a_r\rangle=a_1A+\ldots+a_rA=\{a_1b_1+\ldots+a_rb_r \mid b_1,\ldots,b_r\in A\}
\]
is an ideal in~$A$, called the \emph{ideal generated by~$a_1,\ldots,a_r$}.
It is the smallest ideal of $A$ containing~$a_1,\ldots,a_r$.

\begin{remark}\label{rem: ideal generated by invertibles}
    An element $a\in A$ is invertible if and only if~$aA=A$.
    Indeed, if $a$ is invertible, then for all $b\in A$ we have $b=a\cdot a^{-1}\cdot b$ and so $b$ belongs to~$aA$.
    Conversely, if $aA=A$ then $1_A\in aA$ and so there is a multiplicative inverse of $a$ in~$A$.
\end{remark}

\begin{remark}\label{rem: ideals of semifields}
    If $F$ is a semifield, then the unique ideals of $F$ are $\{0_F\}$ and~$F$.
    Indeed, let $I$ be a non-zero ideal of~$F$, and let $a\neq 0_F$ be an element of~$I$.
    It is invertible, and hence by \cref{rem: ideal generated by invertibles} the minimal ideal of $F$ containing it is $F$ itself, forcing~$I=F$.
    Notice also that in this case $\{0_F\}$ is the only prime ideal of~$F$.
\end{remark}

It is customary to denote prime ideals of a semiring $A$ with gothic letters.
Notice that for a prime ideal $\p$ of~$A$, since $A\setminus \p$ is a saturated multiplicative submonoid, we can localize at~$A\setminus \p$.
We denote the corresponding localization by~$A_{\p}=(A\setminus \p)^{-1}A$.  

The following statement, whose proof is immediate, reflects the usual stability of ideals and primality under taking preimages.

\begin{lemma}\label{lem: preimage of ideals}
    Let $f\colon A \to B$ be a morphism of semirings, and let $I$ be an ideal of~$B$.
    Then:
    \begin{enumerate}
        \item $f^{-1}(I)$ is an ideal of~$A$;
        \item if $I$ is prime, then $f^{-1}(I)$ is prime.
    \end{enumerate}
\end{lemma}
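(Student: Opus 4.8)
The plan is to verify the defining conditions directly from the definitions, exploiting only that $f$ preserves both operations as well as the two neutral elements. For part~(1), I would first note that $f(0_A)=0_B\in I$, so $0_A\in f^{-1}(I)$. Next, given $a_1,a_2\in f^{-1}(I)$, the relation $f(a_1+a_2)=f(a_1)+f(a_2)$ together with $I+I\subseteq I$ forces $a_1+a_2\in f^{-1}(I)$. Finally, given $a\in f^{-1}(I)$ and an arbitrary $c\in A$, the relation $f(ca)=f(c)f(a)\in B\cdot I\subseteq I$ shows $ca\in f^{-1}(I)$. This is exactly the statement that $f^{-1}(I)$ is an $A$-subsemimodule of $A$, i.e.\ an ideal.

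For part~(2), I would first check that $f^{-1}(I)\neq A$. Since $I$ is prime we have $I\neq B$; if $1_B$ belonged to $I$, then every $b\in B$ would lie in $I$ because $b=b\cdot 1_B\in B\cdot I\subseteq I$, contradicting $I\neq B$. Hence $1_B\notin I$, and as $f(1_A)=1_B$ we conclude $1_A\notin f^{-1}(I)$. It then remains to verify the primality condition: if $a,b\in A$ satisfy $ab\in f^{-1}(I)$, then $f(a)f(b)=f(ab)\in I$, and primality of $I$ gives $f(a)\in I$ or $f(b)\in I$, that is, $a\in f^{-1}(I)$ or $b\in f^{-1}(I)$. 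Alternatively, one can argue via complements: the identity $A\setminus f^{-1}(I)=f^{-1}(B\setminus I)$ makes it evident that the complement is a nonempty multiplicative submonoid (it contains $1_A$ and is closed under products since $B\setminus I$ is), which by the characterization of prime ideals as complements of nonempty saturated multiplicative submonoids is equivalent to $f^{-1}(I)$ being prime.

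There is no genuine obstacle here: the statement is a formal consequence of the definitions, as the authors already indicate. The one point worth a moment's attention is the non-triviality $f^{-1}(I)\neq A$ in part~(2), where one essentially uses the convention that a homomorphism of semirings sends $1_A$ to $1_B$; everything else is bookkeeping with the compatibility of $f$ with $+$ and $\cdot$.
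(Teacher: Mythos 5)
Your proof is correct, and since the paper declares the lemma ``immediate'' without writing out an argument, the direct definitional check you give is precisely what the authors have in mind. The only cosmetic remark is that in your alternative argument via complements you verify that $A\setminus f^{-1}(I)$ is a nonempty multiplicative submonoid but not that it is saturated; this is harmless because once part~(1) establishes that $f^{-1}(I)$ is an ideal, saturation of its complement is automatic (if $ab\notin f^{-1}(I)$ and $a\in f^{-1}(I)$, then $ab\in A\cdot f^{-1}(I)\subseteq f^{-1}(I)$, a contradiction), which is why the paper's definition parenthesizes ``(saturated).''
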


Finally, we will need the semiring version of Krull's theorem, which equates the radical of an ideal with the intersection of all prime ideals containing it. 

\begin{theorem}\cite[Proposition 7.28]{Golan1999}\label{thm: radical of ideals}
    Let $I$ be an ideal in a semiring~$A$.
    Then the radical of $I$ defined as 
    \[\rad(I)=\{a\in A \mid \exists n\ge 0 \text{ such that } a^n\in I\}\]
    is an ideal and\[\rad(I) =\bigcap_{\substack{I \subseteq \p \\\p\ \mathrm{prime\ ideal}}} \p.\]
\end{theorem}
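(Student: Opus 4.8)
The plan is to prove the statement in two stages: first that $\rad(I)$ is an ideal, and then the equality $\rad(I)=\bigcap_{I\subseteq\p}\p$ by establishing the two inclusions separately. The technical workhorse throughout is the ordinary binomial expansion
\[
(a+b)^{N}=\sum_{k=0}^{N}\binom{N}{k}a^{k}b^{N-k},
\]
which holds verbatim in any commutative semiring: it uses only commutativity and distributivity, and the coefficients $\binom{N}{k}\in\mathbb{N}$ act on $A$ by iterated addition, so no additive inverses intervene.

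For the first stage: $0_A\in\rad(I)$ trivially; if $a^{n}\in I$ then $(ca)^{n}=c^{n}a^{n}\in I$ for every $c\in A$, so $\rad(I)$ is closed under multiplication by $A$; and if $a^{n}\in I$ and $b^{m}\in I$, then in the expansion of $(a+b)^{n+m-1}$ each summand $\binom{n+m-1}{k}a^{k}b^{n+m-1-k}$ lies in $I$, since either $k\ge n$ or $n+m-1-k\ge m$, whence $a+b\in\rad(I)$. The inclusion $\rad(I)\subseteq\bigcap_{I\subseteq\p}\p$ is then immediate: if $a^{n}\in I\subseteq\p$ with $\p$ prime, an induction on $n$ using the defining property of a prime ideal forces $a\in\p$.

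The substantive part is the reverse inclusion, for which I would run a Zorn's lemma argument mirroring the classical one. Assume $a\notin\rad(I)$ and set $S=\{a^{k}\mid k\ge 0\}$, a multiplicative submonoid with $S\cap I=\emptyset$. Among the ideals $J$ of $A$ with $I\subseteq J$ and $J\cap S=\emptyset$ — a nonempty family (it contains $I$) closed under unions of chains — pick a maximal element $\p$; it is proper since $1_A\in S$. To see that $\p$ is prime, suppose $xy\in\p$ but $x,y\notin\p$. Then the ideals $\p+xA$ and $\p+yA$ strictly contain $\p$, hence meet $S$ by maximality; write $a^{i}=p_{1}+xb_{1}$ and $a^{j}=p_{2}+yb_{2}$ with $p_{1},p_{2}\in\p$ and $b_{1},b_{2}\in A$. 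Expanding $a^{i+j}=(p_{1}+xb_{1})(p_{2}+yb_{2})$ exhibits $a^{i+j}$ as a sum of four terms, each lying in $\p$ (the last one because $xy\in\p$), contradicting $S\cap\p=\emptyset$. Hence $\p$ is a prime ideal containing $I$ with $a\notin\p$, so $a\notin\bigcap_{I\subseteq\p}\p$.

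I expect the only real subtlety to be bookkeeping around the absence of subtraction: one must check that $\p+xA=\{p+xb\mid p\in\p,\ b\in A\}$ is genuinely an ideal, that it contains $x$ and properly contains $\p$ when $x\notin\p$ (all of which uses only that $\p$ is an ideal and that $A$ has a unit), and that the binomial step goes through with natural-number coefficients. None of these is difficult, but they are exactly the places where the ring-theoretic proof might tacitly invoke additive inverses, so they deserve to be spelled out.
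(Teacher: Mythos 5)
Your proof is correct, and it is the standard Krull-type argument (maximal ideal disjoint from the multiplicative set $\{a^k\}$ via Zorn's lemma, plus the binomial expansion with natural-number coefficients for closure of $\rad(I)$ under addition); the paper itself gives no proof and simply cites Golan, whose argument this essentially reproduces. The points you flag as needing care in the absence of subtraction are exactly the right ones, and they all go through as you describe.
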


\subsection{Subtractive ideals and kernels} \label{sec: subtractive}

Let us fix the choice of a semiring $A$ and spend a few words on the relation between ideals of $A$ and morphisms with source~$A$.

On the one hand, if $f\colon A\to B$ is a morphism of semirings, the subset $\ker(f)=f^{-1}(0_B)$ is an ideal in~$A$ because of \cref{lem: preimage of ideals}.
We call it the \emph{kernel} of~$f$.

On the other hand, any ideal $I$ of a semiring $A$ gives rise to a morphism of semirings.
To do so, we define a congruence relation $\sim_I$ on $A$ by declaring, for all~$a,b\in A$,
\[
a \sim_I b \quad\iff\quad \exists\ i,j \in I \text{ such that }a+i = b+ j.
\]
Then $A/{\sim_I}$ inherits from $A$ a semiring structure, and the \emph{quotient map} $\pi_I\colon A \to A/{\sim_I}$ is a semiring homomorphism. 

\begin{remark}\label{rem: relation between kernel and quotient}
In the above notation, it always holds that~$I\subseteq\ker(\pi_I)$, as $i\sim_I0_A$ for all~$i\in I$.
However, it can happen that the containment is strict.
For example, consider the ideal $I=\mathbb{N}\setminus\{1\}$ of the semiring~$\mathbb{N}$, as in \cref{ex: a prime ideal of N}.
It satisfies
\[
\ker(\pi_I)
=
\{n\in\mathbb{N}\mid n\sim_I0\}
=
\{n\in\mathbb{N}\mid n+m=\ell\text{ for some }m,\ell\in\mathbb{N}\setminus\{1\}\}
\]
and so in this case~$I\subsetneq\ker(\pi_I)=\mathbb{N}$.
\end{remark}

To avoid the unpleasant situation of the example mentioned in \cref{rem: relation between kernel and quotient} we introduce the following special class of semiring ideals.

\begin{definition}\label{def: substractive ideal}
An ideal $I$ in a semiring $A$ is called a \emph{subtractive ideal} (or a \emph{$k$-ideal} or also a \emph{kernel of $A$}) if for every $a\in A$ such that there exist $b,c\in I$ with $a+b=c$ then $a\in I$. 
\end{definition}

The different terminology in \cref{def: substractive ideal} is justified by the following equivalence. 

\begin{proposition}\cite[Proposition 10.11]{Golan1999}\label{prop: k-ideals}
Let $I$ be a subset of a semiring~$A$.
Then, the following are equivalent:
\begin{enumerate}[(i)]
\item $I$ is a subtractive ideal of $A$,
\item $I$ is the kernel of the quotient map~$\pi_I \colon A\to A/{\sim_I}$,
\item there exists a morphism of semirings $f\colon A\to B$ such that~$I=\ker(f)$.
\end{enumerate}
\end{proposition}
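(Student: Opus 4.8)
The plan is to prove the cyclic chain of implications (i) $\Rightarrow$ (ii) $\Rightarrow$ (iii) $\Rightarrow$ (i). Each step is essentially a direct unwinding of definitions, so the proof will be short; the only implication that genuinely uses the subtractivity hypothesis is the first one.

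For (i) $\Rightarrow$ (ii), I would recall from \cref{rem: relation between kernel and quotient} that the inclusion $I\subseteq\ker(\pi_I)$ holds for every ideal $I$, so only the reverse inclusion requires an argument. Let $a\in\ker(\pi_I)$, i.e. $a\sim_I 0_A$. By definition of the congruence $\sim_I$ this means there exist $i,j\in I$ with $a+i=0_A+j=j$. Since $I$ is a subtractive ideal, the relation $a+i=j$ with $i,j\in I$ forces $a\in I$. Hence $\ker(\pi_I)\subseteq I$, and so $I=\ker(\pi_I)$, which is condition (ii).

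The implication (ii) $\Rightarrow$ (iii) is immediate: take $B=A/{\sim_I}$ and $f=\pi_I$, which is a semiring homomorphism by construction and satisfies $I=\ker(f)$ by hypothesis. For (iii) $\Rightarrow$ (i), suppose $I=\ker(f)$ for some semiring homomorphism $f\colon A\to B$. First, $I=f^{-1}(0_B)$ is an ideal of $A$ by \cref{lem: preimage of ideals}(1), so it remains only to verify the subtractivity condition in \cref{def: substractive ideal}. Let $a\in A$ and assume there are $b,c\in I$ with $a+b=c$. Applying $f$ and using that it preserves addition, we get $f(a)+f(b)=f(c)$, that is $f(a)+0_B=0_B$, whence $f(a)=0_B$ and therefore $a\in\ker(f)=I$. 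This is exactly the defining property of a subtractive ideal, completing the cycle.

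I do not expect a real obstacle here. The single point that uses the hypothesis in an essential way is the inclusion $\ker(\pi_I)\subseteq I$ in the step (i) $\Rightarrow$ (ii); everything else is formal. The only thing to be mildly careful about is that "subtractive ideal" presupposes being an ideal, which is why the reference to \cref{lem: preimage of ideals} is needed in the last implication, and that the congruence $\sim_I$ and the quotient semiring structure on $A/{\sim_I}$ have already been set up before the statement, so they may be invoked freely.
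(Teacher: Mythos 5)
Your proof is correct and follows exactly the same route as the paper: the cycle (i) $\Rightarrow$ (ii) $\Rightarrow$ (iii) $\Rightarrow$ (i), using \cref{rem: relation between kernel and quotient} for the easy inclusion, subtractivity for the reverse, and \cref{lem: preimage of ideals} plus applying $f$ to $a+b=c$ for the last step. No differences worth noting.
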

\begin{proof}
Let us start by supposing that (i) holds.
The kernel of $\pi_I\colon A \to A/{\sim_I}$ contains~$I$, as noticed in \cref{rem: relation between kernel and quotient}.
Conversely, let~$a \in \ker(\pi_I)$.
Then $a\sim_I0_A$ and so there are $i,j \in I$ such that~$a + i = j$.
Since $I$ is subtractive, it follows that~$a \in I$.

The fact that (ii) implies (iii) is obvious.
Hence, assume now that (iii) holds and let $I = f^{-1}(0_B)$ for a morphism of semirings~$f\colon A\to B$.
Then $I$ is an ideal of $A$ by \cref{lem: preimage of ideals}.
Moreover, let $a \in A$ and $b,c \in I$ such that $a + b = c$.
Applying $f$ to both sides of the equality we conclude that~$f(a)=0_B$, and so~$a\in I$.
\end{proof}

Apart from \cref{def: substractive ideal}, the previous result also justifies the following terminology.

\begin{definition}\label{def: subtractive closure}
    Let $I$ be an ideal in a semiring $A$ and $\pi_I\colon A\to A/{\sim_I}$ the corresponding quotient map.
    Then, the ideal $\overline{I}=\ker(\pi_I)$ is called the \emph{subtractive closure of~$I$} (or also, the \emph{kernel of~$I$}).
\end{definition}

Indeed, for any ideal $I$ of~$A$, the set $\overline{I}$ is the smallest subtractive ideal of $A$ that contains~$I$.
The fact that it is a subtractive ideal follows from \cref{prop: k-ideals}.
To show minimality, assume that $I$ is contained in a subtractive ideal $J$ of $A$ and let~$a\in\ker(\pi_I)$.
Then, by definition $a\sim_I 0_A$ and so there exist $b,c\in I\subseteq J$ with $a+b=c$.
Since $J$ is subtractive it must happen that~$a\in J$.
So, any subtractive ideal $J$ containing $I$ has to contain~$\overline{I}=\ker(\pi_I)$.

\begin{remark}\label{rem: subtractive ideals in rings}
When $A$ is actually a ring, it is obvious from \cref{def: substractive ideal} and the existence of additive inverses that all ideals of $A$ are subtractive.
\end{remark}

\begin{remark}\label{rmk: Kernels for idempotent semirings}  
When the semiring $A$ is idempotent, \cref{def: substractive ideal} can be rephrased in the following slightly simpler way: an ideal $I$ of $A$ is subtractive if and only if for every $a\in A$ such that there exists $b\in I$ with $a+b=b$ then~$a\in I$.
It is obvious that such a condition for $a$ implies the one in \cref{def: substractive ideal}.
Conversely, if there exist $b,c\in I$ with $a+b=c$ we can add $b+c$ to both sides of the equality to obtain
\[
a+(b+c)=b+c,
\]
thanks to the fact that $A$ is idempotent.

Recalling that an idempotent semiring $A$ is a partially ordered set with respect to the order defined by $a\preceq b$ whenever $a+b=b$, we obtain that an ideal $I$ of $A$ is subtractive if and only if it is down-closed: if $a\preceq b$ and~$b\in I$, then~$a\in I$.  
\end{remark}

The following property of subtractive ideals will be used later.

\begin{lemma}\label{lem: preimage of subtractive ideal}
Let $f\colon A \to B$ be a morphism of semirings. If $I \subseteq B$ is a subtractive ideal of~$B$, then $f^{-1}(I)$ is a subtractive ideal of~$A$. 
\end{lemma}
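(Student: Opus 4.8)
The plan is to use the characterization from \cref{prop: k-ideals}: a subset of a semiring is a subtractive ideal if and only if it arises as the kernel of some semiring homomorphism. Since $I \subseteq B$ is a subtractive ideal, there exists a morphism of semirings $g\colon B \to C$ with $I = \ker(g) = g^{-1}(0_C)$. Then $g\circ f\colon A \to C$ is a morphism of semirings, and
\[
(g\circ f)^{-1}(0_C) = f^{-1}\bigl(g^{-1}(0_C)\bigr) = f^{-1}(I),
\]
so $f^{-1}(I)$ is the kernel of $g\circ f$ and hence, again by \cref{prop: k-ideals}, a subtractive ideal of $A$. This is the cleanest route and I expect no real obstacle; the only thing to check is that the composition of semiring homomorphisms is again a semiring homomorphism, which is routine.

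Alternatively, one can argue directly from \cref{def: substractive ideal} without invoking \cref{prop: k-ideals}. First, $f^{-1}(I)$ is an ideal of $A$ by \cref{lem: preimage of ideals}(1). Next, suppose $a\in A$ and $b,c\in f^{-1}(I)$ satisfy $a+b=c$. Applying $f$ gives $f(a)+f(b)=f(c)$ in $B$, with $f(b),f(c)\in I$; since $I$ is subtractive, $f(a)\in I$, i.e. $a\in f^{-1}(I)$. This shows $f^{-1}(I)$ is subtractive. Either argument works; the direct one has the advantage of not relying on the existence statement in \cref{prop: k-ideals}, but both are short. I would present the first one for brevity, or simply note that the claim follows by applying $f$ to the defining relation, exactly as in the proof that (iii) implies (i) in \cref{prop: k-ideals}.

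There is essentially no hard part here: the statement is a formal consequence of the definitions, paralleling the fact that preimages of ideals are ideals. The only conceptual point worth flagging is that this is the subtractive-ideal analogue of \cref{lem: preimage of ideals}(2) for prime ideals, and it is what will make the assignment $A \mapsto \Sp A$ functorial later in \cref{sec: structure sheaf for Sp}.
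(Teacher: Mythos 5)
Your proposal is correct and matches the paper's own proof, which likewise gives both arguments: the direct verification by applying $f$ to the relation $a+b=c$, and the observation that $f^{-1}(I)=\ker(\pi_I\circ f)$ via \cref{prop: k-ideals} (the paper uses the quotient map $\pi_I$ where you use an arbitrary $g$ with $I=\ker(g)$, which is the same idea). Nothing is missing.
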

\begin{proof}
    Let $a\in A$ and suppose there are $b,c\in f^{-1}(I)$ with~$a+b=c$.
    This implies that~$f(a)+f(b)=f(c)$, and so that~$f(a)\in I$, by definition.
    
    Alternatively, the claim follows from \cref{prop: k-ideals} and the observation that $f^{-1}(I)=\ker(\pi_I\circ f)$, where $\pi_I \colon B\to B/{\sim_I}$ is the quotient map. 
\end{proof}

\subsection{Hard semirings}

We conclude our algebraic discussion by considering a special class of semirings, for which the notion of invertibility is equivalent to a much weaker one.

To do so, recall from \cref{rem: ideal generated by invertibles} that an element $a$ of a semiring $A$ is invertible if and only if~$aA=A$.
This property, together with \cref{def: subtractive closure}, suggests the following relaxed notion.

\begin{definition}
    Let $A$ be a semiring.
    An element $a\in A$ is said to be \emph{semi-invertible} if~$\overline{aA}=A$.
\end{definition}

Since the subtractive closure of $aA$ is an ideal of~$A$, saying that $a$ is semi-invertible is equivalent to asserting that~$1_A\in\overline{aA}$, or more explicitly that there exist $b,c\in A$ with~$1_A+ab=ac$.

\begin{remark}\label{rem: Semi-invertible for idempotent semirings} 
    Using the partial order on an idempotent semiring $A$, we can describe its semi-invertible elements as those $a\in A$ such that there exists $b\in A$ with~$1_A\preceq ab$.

    This is because if for $a,b,c\in A$ we have~$1_A+ac=ab$, then $1_A+ab=1_A+(1_A+ac)=1_A+ac=ab$, hence $1_A\preceq ab$. 
\end{remark}

\begin{remark}\label{rem: subtractive ideals with semi-invertble elements are full semiring}
 If a subtractive ideal $I$ of $A$ contains a semi-invertible element, then~$I=A$.
 Indeed, if $a\in I$ is semi-invertible, then $A=\overline{aA}\subseteq\overline{I}=I$.
\end{remark}

It is obvious from the definition that any invertible element of $A$ is also semi-invertible, and that when $A$ is a ring the two notions agree.
In a general semiring, however, there might be many more semi-invertible elements than invertible ones.

\begin{example}\label{ex: non-hard semiring}
    Consider the semiring $A=\mathbb{B}[x]$ of univariate polynomials with coefficients in the boolean semiring $\mathbb{B}$ from \cref{ex: semirings}.
    Any polynomial in this semiring is a finite sum of monomials with coefficient equal to~$1$, and the multiplication between two such polynomials can be expressed combinatorially as follows: if $S_1,S_2$ are two finite subsets of~$\mathbb{N}$, then
    \begin{equation}\label{eq: multiplication in boolean polynomial semiring}
    \bigg(\sum_{s\in S_1}x^s\bigg)\cdot\bigg(\sum_{s\in S_2}x^s\bigg)
    =
    \sum_{s\in S_1+S_2}x^s.            
    \end{equation}
    It follows that the only invertible element of $A$ is the constant polynomial~$1$.
    
    Instead, an element of $A$ is semi-invertible if and only if it is of the form $1 + f$ with~$f\in A$.
    Indeed, $A$ is idempotent and then
    \[
    1+(1+f)=1+f,
    \]
    showing that any polynomial of the form $1+f$ is semi-invertible.
    Conversely, if $a\in A$ is semi-invertible there should exist $b,c\in A$ with~$1+ab=ac$.
    Since the left-hand side of this equality is a polynomial with a nonzero free term, the same must happen for the right-hand side.
    In view of \eqref{eq: multiplication in boolean polynomial semiring} both $a$ and $c$ must have a nonzero free term.
\end{example}

Generalizing \cref{ex: non-hard semiring}, we have the following description of semi-invertible polynomials over idempotent semifields. 

\begin{lemma}\label{lem: semi-invertibles in polynomial ring}
    Let $F$ be an idempotent semifield.
    Then, an element $f\in F[x_1,\ldots,x_n]$ is semi-invertible if and only if~$f(0_F) \neq 0_F$.
\end{lemma}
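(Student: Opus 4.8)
The strategy is to reduce the general case to the combinatorial analysis already carried out for $\mathbb{B}[x]$ in \cref{ex: non-hard semiring}. The easy direction is clear: if $f(0_F)\neq 0_F$, write $f = f(0_F) + g$ where $g$ has zero constant term, i.e.\ $g\in\langle x_1,\ldots,x_n\rangle$. Since $F$ is idempotent and $f(0_F)$ is invertible in $F$, we have $f(0_F)^{-1}f = 1_F + f(0_F)^{-1}g$, and the idempotency computation $1_F + (1_F + f(0_F)^{-1}g) = 1_F + f(0_F)^{-1}g$ shows $1_F \preceq f(0_F)^{-1}f$; multiplying back by the invertible element $f(0_F)$ and using \cref{rem: Semi-invertible for idempotent semirings} gives that $f$ is semi-invertible. (One can also phrase this directly: $1_F + f(0_F)^{-1}\cdot f = f(0_F)^{-1}\cdot f$ exhibits the required relation $1_F + af = bf$ with $a=b=f(0_F)^{-1}$, after the idempotent cancellation.)

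For the converse, suppose $f$ is semi-invertible, so there exist $b,c\in F[x_1,\ldots,x_n]$ with $1_F + fb = fc$. Evaluating the semiring homomorphism $F[x_1,\ldots,x_n]\to F$ given by $x_i\mapsto 0_F$ at both sides yields $1_F + f(0_F)b(0_F) = f(0_F)c(0_F)$ in $F$. The point is now that this equation cannot hold in $F$ if $f(0_F) = 0_F$: the left-hand side would be $1_F$ while the right-hand side would be $0_F$, contradicting $1_F \neq 0_F$ (the zero semiring is not a semifield, or at any rate $F$ has $1_F\neq 0_F$ since it has a nonzero invertible element). Hence $f(0_F)\neq 0_F$.

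I would present both directions compactly, since each is essentially a one-line evaluation argument; there is no real obstacle here beyond making the evaluation homomorphism $F[x_1,\ldots,x_n]\to F$, $x_i\mapsto 0_F$, explicit and noting it is a semiring homomorphism. The only subtlety worth a remark is that for the converse one does \emph{not} need $F$ to be idempotent — the evaluation argument works over any semifield — whereas the forward direction as I stated it uses idempotency through $1_F\preceq f(0_F)^{-1}f$; if one wants to avoid idempotency one should instead argue that $f(0_F)^{-1}f$ has invertible constant term and is therefore semi-invertible by a direct computation, but since the hypothesis is that $F$ is an idempotent semifield there is no need to chase this. A clean writeup first records that the evaluation-at-$0$ map is a homomorphism, then runs the contrapositive for the converse and the idempotency cancellation for the forward implication.
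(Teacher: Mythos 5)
Your proof is correct and follows essentially the same route as the paper: the forward direction is the identical idempotency computation $1_F+\lambda^{-1}f=\lambda^{-1}f$ with $\lambda=f(0_F)$, and the converse is the same evaluation-at-$0_F$ argument, merely phrased via the raw definition $1_F+fb=fc$ rather than via the idempotent characterization of \cref{rem: Semi-invertible for idempotent semirings} used in the paper. The only (harmless) extra care you take is noting $1_F\neq 0_F$, which is implicit in the paper's convention that semifields are nontrivial.
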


\begin{proof}
    First, suppose that we have $f\in F[x_1,\ldots,x_n]$ with $\lambda=f(0_F) \neq 0_F$. Then $\lambda^{-1}f = 1_F + g$ for $g \in F[x_1,\ldots,x_n]$ with $g(0_F)=0_F$, so
    \[1_F+\lambda^{-1}f=1_F+(1_F+g)=1_F+g=\lambda^{-1}f.\]
    In particular, $f$ is semi-invertible. 

    Conversely, if~$f(0_F)=0_F$, then for all $g\in F[x_1,\ldots,x_n]$ we have $(fg)(0_F)=0_F$, while~$(1_F+fg)(0_F)=1_F$.
    Therefore $1_F+fg \ne fg$,  and hence $f$ is not semi-invertible by \cref{rem: Semi-invertible for idempotent semirings}. 
\end{proof}

We reserve a special name for the semirings in which there is no difference between the notion of invertibility and of semi-invertibility.

\begin{definition}
    A semiring $A$ is called \emph{hard} if the set of its invertible elements coincides with the set of semi-invertible ones. 
\end{definition}

\begin{example}\label{ex: hard and non-hard semirings}
    Here are some instances of hard and non-hard semirings.
    \begin{enumerate}
        \item Any ring is hard as a semiring.
        This is a direct consequence of \cref{rem: subtractive ideals in rings}.
        \item The semiring of natural numbers $\mathbb{N}$ is hard, since its only semi-invertible element is~$1$.
        To see this, let $n\in\mathbb{N}$ be semi-invertible.
        Then there exist two natural numbers $m,\ell$ for which
        \[
        1+nm=n\ell.
        \]
        The left-hand side of this equality is a natural number congruent to $1$ modulo~$n$, while the right-hand side is a multiple of~$n$.
        This is absurd unless~$n=1$.
        \item Any semifield is a hard semiring. Indeed a semi-invertible element in a nontrivial semiring is necessarily distinct from zero and so by definition of semifield it is invertible. In particular, the boolean domain $\mathbb{B}$ and the tropical semiring $\mathbb{T}$ are hard.
        \item The semiring $\mathbb{B}[x]$ is not hard because of \cref{ex: non-hard semiring}.
    \end{enumerate} 
\end{example}

The importance of hard semirings in this article is related to the following result, which characterizes ideals whose subtractive closure equals the full semiring.

\begin{proposition}\label{prop: ideals whose kernel is total}
    Let $A$ be a hard and idempotent semiring.
    Then, if $I$ is an ideal of $A$ with~$\overline{I}=A$, we have~$I=A$.
\end{proposition}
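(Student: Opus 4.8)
The plan is to unwind the hypothesis $\overline{I} = A$ using the explicit description of the subtractive closure, and then exploit hardness to collapse the resulting semi-invertibility witness into a statement about a genuine invertible element. First I would note that $\overline{I} = A$ means in particular $1_A \in \overline{I} = \ker(\pi_I)$, so $1_A \sim_I 0_A$; by definition of $\sim_I$ there exist $b, c \in I$ with $1_A + b = c$. This says precisely that $1_A \in \overline{b A}$ — indeed $1_A + b = c \in I \subseteq \overline{bA}$ forces $1_A \in \overline{bA}$ since $\overline{bA}$ is subtractive and contains $b$ — so $b$ is a semi-invertible element lying in $I$.

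Now hardness enters: since $A$ is hard, the semi-invertible element $b$ is actually invertible, i.e. $b \in A^{\times}$. But $b \in I$ and $I$ is an ideal, so by \cref{rem: ideal generated by invertibles} we get $A = bA \subseteq I$, whence $I = A$. (Alternatively, once $b$ is known to be semi-invertible and to lie in the subtractive ideal $\overline{I}$, one could invoke \cref{rem: subtractive ideals with semi-invertble elements are full semiring}, but in fact the cleanest route is the one just given, and it even shows the idempotence hypothesis is not strictly needed here — only the implicit use via $\overline{I}$ being subtractive and $b \in \overline{bA}$, which holds in any semiring.)

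I expect the only point requiring a moment's care is the intermediate claim that $b$ (not merely some more complicated combination) is semi-invertible: one must check that from $1_A + b = c$ with $b, c \in I$ we may conclude $1_A \in \overline{bA}$ rather than just $1_A \in \overline{I}$. This follows because $c \in I$, hence $c \in bA + (\text{stuff})$ is not automatic — so instead I would argue directly that $\overline{bA}$ is a subtractive ideal containing $b$, and the relation $1_A + b = c$ with $b \in \overline{bA}$ shows $1_A \sim_{bA} 0_A$ only if $c \in bA$, which we do not know. The correct fix is simpler: $1_A + b \cdot 1_A = c$, and rather than insisting $c \in bA$ we observe that we only need $c \in I$ together with $b \in I$; so in truth the element witnessing semi-invertibility should be read off as follows — $1_A \in \overline{I}$ directly gives, via \cref{rem: subtractive ideals with semi-invertble elements are full semiring} run in reverse, nothing — so the honest argument is: $1_A \sim_I 0_A$ yields $b,c \in I$ with $1_A + b = c$; since $b \in I$ we cannot yet say $b$ is semi-invertible, but we \emph{can} say $1_A \preceq c$ fails in general. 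The genuinely clean path, which I would adopt in the writeup, is: take $b, c \in I$ with $1_A + b = c$; then $c = 1_A + b$ is semi-invertible in the sense that $1_A + c = 1_A + 1_A + b = 1_A + b = c$ using idempotence (here idempotence is essential), so $c$ is a semi-invertible element of $I$. By hardness $c \in A^{\times}$, and then $A = cA \subseteq I$ by \cref{rem: ideal generated by invertibles}, giving $I = A$. The main obstacle is thus identifying the right semi-invertible witness inside $I$; the resolution is to use $c = 1_A + b$ and the idempotence of $A$ to see directly that $1_A + c = c$.
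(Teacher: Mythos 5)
Your final argument --- taking $b,c\in I$ with $1_A+b=c$ and observing that $c=1_A+b$ satisfies $1_A+c=c$ by idempotence, hence is a semi-invertible element of $I$, hence invertible by hardness, hence $I=A$ --- is correct and is exactly the paper's proof (with $i=b$, $j=c$). You were right to retract the first route claiming $b$ itself is semi-invertible and that idempotence is dispensable: idempotence is genuinely needed, as the example $I=\mathbb{N}\setminus\{1\}$ in the hard, non-idempotent semiring $\mathbb{N}$ (given right after the proposition) shows.
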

\begin{proof}
    By assumption on~$I$, we have that~$1_A\in \overline{I}$.
    Otherwise said, there exist $i,j\in I$ with~$1_A+i=j$.
    Since $A$ is idempotent, we can write
    \[
    j=1_A+i=1_A+1_A+i=1_A+j.
    \]
    In particular, $j$ is semi-invertible and then, by hardness of~$A$, it is invertible.
    It follows that $I$ contains an invertible element of~$A$, and then~$I=A$.
\end{proof}

\begin{example}
    The implication in \cref{prop: ideals whose kernel is total} is not true for general hard semirings.
    For example, consider the semiring~$\mathbb{N}$, which is hard by \cref{ex: hard and non-hard semirings}.
    Its ideal $I=\mathbb{N} \setminus\{1\}$ from \cref{ex: a prime ideal of N} is proper, but it verifies that~$\overline{I}=\mathbb{N}$, as $1\in \overline{I}$ since $1+2=3$ and~$2,3\in I$. 
\end{example}

Given an arbitrary semiring~$A$, there is a canonical way of constructing a hard semiring associated to~$A$.
To do so, denote by $\wA A$ the set of semi-invertible elements of~$A$.

\begin{lemma}\label{lem: semiinvertibles are saturated multiplicative monoid}
    The subset $\wA A$ of semi-invertible elements of~$A$ is a saturated multiplicative submonoid. 
\end{lemma}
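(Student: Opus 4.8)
The plan is to verify the three defining properties of a saturated multiplicative submonoid directly from the definition of semi-invertibility, namely that $a \in \wA A$ if and only if $1_A \in \overline{aA}$, equivalently if and only if there exist $b,c \in A$ with $1_A + ab = ac$. First I would note that $1_A \in \wA A$ trivially, since $1_A$ is invertible, hence semi-invertible; this handles the identity. For closure under multiplication, suppose $a_1, a_2 \in \wA A$, so $\overline{a_1 A} = A$ and $\overline{a_2 A} = A$. The cleanest route is to observe that $a_1 a_2 A \supseteq a_1 (a_2 A)$, and to argue that $\overline{a_1 a_2 A} = A$ by combining the two semi-invertibility witnesses: pick $b_1, c_1$ with $1_A + a_1 b_1 = a_1 c_1$ and $b_2, c_2$ with $1_A + a_2 b_2 = a_2 c_2$, and then manipulate these identities to produce $b, c$ with $1_A + a_1 a_2 b = a_1 a_2 c$. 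Concretely, multiplying the first relation by $a_2 c_2$ and using the second to rewrite $a_2 c_2$ should, after a short computation, exhibit $1_A$ in $\overline{a_1 a_2 A}$; I would present this as a single displayed chain of equalities.

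Next, for saturation, suppose $ab \in \wA A$; I must show $a \in \wA A$ and $b \in \wA A$. From $ab$ semi-invertible there are $s, t \in A$ with $1_A + abs = abt$. Reading this as $1_A + a(bs) = a(bt)$ immediately shows $1_A \in \overline{aA}$, so $a \in \wA A$; by the symmetric grouping $1_A + b(as) = b(at)$ we get $b \in \wA A$ as well. This step is in fact the easiest, since the witnessing identity for $ab$ factors through both $a$ and $b$ with no further work.

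The only genuine point requiring care is the multiplicativity step, and even there the obstacle is merely bookkeeping: finding the right product of the two witnessing relations so that the left-hand side collapses to $1_A$ plus a multiple of $a_1 a_2$. One should double-check that every term on the right genuinely lies in $a_1 a_2 A$; using $a_1 c_1 (a_2 c_2) = (1_A + a_1 b_1)(1_A + a_2 b_2)$ and expanding the right-hand side, one sees $1_A$ appears with the remaining terms all divisible by $a_1$ or $a_2$ — and after grouping, by $a_1 a_2$ — which yields the desired relation. Alternatively, and perhaps more transparently, one can phrase the whole argument via \cref{lem: Saturation} applied to the localization map, since \cref{rem: subtractive ideals with semi-invertble elements are full semiring} and the universal property identify $\wA A$ with the set of elements inverted by the appropriate localization; but the direct elementary verification above is self-contained and short, so that is the route I would take.
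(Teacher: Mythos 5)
Your treatment of the identity element and of saturation is correct and coincides with the paper's proof. The gap is in the multiplicativity step --- exactly the one place where the absence of subtraction bites, and which you defer as ``bookkeeping''. The justification you sketch, namely to expand $a_1c_1\,a_2c_2=(1_A+a_1b_1)(1_A+a_2b_2)$ and observe that ``the remaining terms are all divisible by $a_1$ or $a_2$ --- and after grouping, by $a_1a_2$'', does not work: the expansion reads
\[
a_1a_2c_1c_2=1_A+a_1b_1+a_2b_2+a_1a_2b_1b_2,
\]
and the terms $a_1b_1$ and $a_2b_2$ are multiples of $a_1$, respectively of $a_2$, but not of $a_1a_2$; no regrouping places $a_1b_1+a_2b_2$ inside $a_1a_2A$, and without additive inverses you cannot move these terms to the other side. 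Your first suggestion (multiply $1_A+a_1b_1=a_1c_1$ by $a_2c_2$ and substitute) has the same defect: it yields $1_A+a_2b_2+a_1a_2b_1c_2=a_1a_2c_1c_2$, with the stray summand $a_2b_2\notin a_1a_2A$.

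The identity that actually works, and that the paper writes down, uses the \emph{cross} terms:
\[
1_A+a_1a_2(b_1c_2+c_1b_2)=a_1a_2(c_1c_2+b_1b_2).
\]
To verify it, substitute $a_2c_2=1_A+a_2b_2$ into $a_1a_2b_1c_2=(a_1b_1)(a_2c_2)$ and $a_1c_1=1_A+a_1b_1$ into $a_1a_2c_1b_2=(a_1c_1)(a_2b_2)$, and expand the right-hand side as $(a_1c_1)(a_2c_2)+a_1a_2b_1b_2$; both sides become $1_A+a_1b_1+a_2b_2+a_1a_2b_1b_2+a_1a_2b_1b_2$. Finally, the alternative route you mention via \cref{lem: Saturation} and ``the appropriate localization'' is circular as stated: the localization that inverts exactly $\wA{A}$ is the hardening $(\wA{A})^{-1}A$ of \eqref{eq: definition of hardening}, whose very construction presupposes the present lemma.
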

\begin{proof}
    It is clear that $1_A\in \wA A$. Moreover, if $a$ and $b$ are semi-invertible, there exist $c,d,x,y\in A$ such that $1_A+ac=ad$ and~$1_A+bx=by$.
    Then, $1_A+ab(cy+dx)=ab(dy+cx)$ and therefore $ab$ is also semi-invertible. 

    Conversely, if $ab$ is semi-invertible, so $1_A+abc=abd$ for some~$c,d\in A$.
    Then clearly $a$ and $b$ are semi-invertible too, as~$1_A+a(bc)=a(bd)$, and analogously for~$b$.  
    \end{proof}

We also refer the reader to \cref{lem: submonoid of open set for kernels} for a generalization of \cref{lem: semiinvertibles are saturated multiplicative monoid} (with an independent proof).
In view of the latter, it makes sense to consider the localization
\begin{equation}\label{eq: definition of hardening}
    A^{\diamondsuit}={(\wA A)}^{-1}A,
\end{equation}
which is called the \emph{hardening} of~$A$.
The terminology is justified by the following result.

\begin{proposition}\label{prop: hardening is hard}
    The semiring $A^{\diamondsuit}$ is hard.
    Moreover, the localization homomorphism $\chi_A\colon A\to A^{\diamondsuit}$ verifies the following universal property: for any homomorphism $f\colon A\to B$ with $B$ hard, there exists a unique homomorphism $f^{\diamondsuit}\colon A^{\diamondsuit}\to B$ such that $f=f^{\diamondsuit}\circ \chi_A$.
    
    In particular, for any morphism $f\colon A\to B$ between two arbitrary semirings, there exists a unique homomorphism $f^{\diamondsuit}\colon A^{\diamondsuit}\to B^{\diamondsuit}$ such that $ f^{\diamondsuit}\circ \chi_A=\chi_B\circ f$.
\end{proposition}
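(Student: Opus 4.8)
The plan is to treat the two assertions of the proposition separately, deducing the final ``in particular'' clause from them.

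To prove that $A^{\diamondsuit}$ is hard, I would pick an arbitrary semi-invertible element $a/s$ of $A^{\diamondsuit}$, with $a\in A$ and $s\in\wA A$, and show it is invertible. Spelling out semi-invertibility produces elements $b/t, c/u\in A^{\diamondsuit}$ with $\tfrac{1_A}{1_A}+\tfrac as\cdot\tfrac bt=\tfrac as\cdot\tfrac cu$; clearing denominators and applying the defining relation of the localization at $\wA A$ then yields an element $\sigma\in\wA A$ together with $\beta,\gamma\in A$ such that $\sigma+a\beta=a\gamma$ in $A$. The crucial remark is that this identity says precisely that $\sigma\in\overline{aA}$, because $a\beta$ and $a\gamma$ belong to $aA$. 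Since $\wA A$ is multiplicatively closed by \cref{lem: semiinvertibles are saturated multiplicative monoid}, the element $\sigma$ is itself semi-invertible, so the subtractive ideal $\overline{aA}$ contains a semi-invertible element and hence equals $A$ by \cref{rem: subtractive ideals with semi-invertble elements are full semiring}. Thus $a$ is semi-invertible in $A$, i.e.\ $a\in\wA A$, so $\chi_A(a)=a/1$ is invertible in $A^{\diamondsuit}$, and therefore $a/s=(a/1)(1/s)$ is invertible as well.

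For the universal property, let $f\colon A\to B$ be a homomorphism with $B$ hard. The key point is that $f$ carries semi-invertible elements to semi-invertible elements: applying $f$ to a relation $1_A+ab=ac$ gives $1_B+f(a)f(b)=f(a)f(c)$. As $B$ is hard, these images are in fact invertible, so $f(\wA A)\subseteq B^{\times}$. The usual universal property of the localization $\chi_A=\varphi_{\wA A}\colon A\to A^{\diamondsuit}$ then furnishes a unique homomorphism $f^{\diamondsuit}\colon A^{\diamondsuit}\to B$ with $f=f^{\diamondsuit}\circ\chi_A$. Finally, for an arbitrary morphism $f\colon A\to B$ between semirings, one applies this to the composite $\chi_B\circ f\colon A\to B^{\diamondsuit}$, whose target is hard by the first part of the statement; this provides the desired unique $f^{\diamondsuit}\colon A^{\diamondsuit}\to B^{\diamondsuit}$ with $f^{\diamondsuit}\circ\chi_A=\chi_B\circ f$.

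I expect the only genuinely delicate step to be the denominator bookkeeping in the hardness argument, and in particular the observation that the cleared relation exhibits a semi-invertible element inside $\overline{aA}$; once that is in place, \cref{rem: subtractive ideals with semi-invertble elements are full semiring} closes the argument at once. Everything else is a routine invocation of the universal property of localization.
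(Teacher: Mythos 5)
Your proof is correct and takes essentially the same route as the paper's: in both cases one clears denominators from the semi-invertibility relation for $a/s$ to exhibit a semi-invertible element of $A$ inside $\overline{aA}$ (the product of denominators, which lies in $\wA A$ because $\wA A$ is a multiplicative submonoid), then concludes $\overline{aA}=A$ via \cref{rem: subtractive ideals with semi-invertble elements are full semiring} and hence $a\in\wA A$. The only cosmetic difference is how you close the hardness argument: you observe directly that $a/1$ and $1/s$ are both invertible in $A^{\diamondsuit}$, whereas the paper invokes the characterization of units in a localization at a saturated submonoid (cf. \cref{lem: Saturation}). The universal-property and ``in particular'' steps match the paper's, which it states more briefly.
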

\begin{proof}
    Recall that for a multiplicative submonoid~$S\subseteq A$, an element $a/s\in S^{-1}A$ is invertible if and only if there are $b\in A$ and $t,u\in S$ such that~$abu=stu$. If $S$ is saturated, this is equivalent to saying that~$a\in S$, see also \cref{lem: Saturation}. 

    So, thanks to \cref{lem: semiinvertibles are saturated multiplicative monoid}, to prove that $A^{\diamondsuit}$ is hard it is enough to show that any semi-invertible element $a/s$ of~$A^\diamondsuit=(\wA A)^{-1}A$ satisfies~$a\in \wA A$.
    Since $a/s$ is semi-invertible, there exist $b,c\in A$ and $u,v\in\wA A$, such that 
    \[ \frac{1_A}{1_A} + \frac as \frac bu =\frac as \frac cv.\]
    Therefore, there exists $w\in \wA A$ such that 
    \[s^2uvw + a (bsvw)=a(csuw).\]
    Thus, $s^2uvw \in \overline{aA}$.
    As $s^2uvw\in \wA A$, the subtractive ideal generated by $a$ contains a semi-invertible element, hence $\overline{aA}=A$ because of \cref{rem: subtractive ideals with semi-invertble elements are full semiring}.
    We conclude that $a$ is semi-invertible.

    Since the image of a semi-invertible element by a homomorphism of semirings is again semi-invertible, all other claims are deduced from the universal property of the localization.
\end{proof}

\begin{example}\label{ex: hardening of B[x]}
    Recall from \cref{ex: hard and non-hard semirings} that the semiring $\mathbb{B}[x]$ is not hard.
    Its hardening satisfies
    \begin{equation}\label{eq: hardening of B[x]}
    \mathbb{B}[x]^\diamondsuit \cong
    \big(\mathbb{N}_{\min}\times\mathbb{Z}_{\max})\cup\{(+\infty,-\infty)\},
    \end{equation}
    where the semiring on the right-hand side has addition given by $(n,d)+(m,e)=(\min(n,m),\max(d,e))$ and multiplication by $(n,d)\cdot(m,e)=(n+m,d+e)$ for all $n,m\in\mathbb{N}$ and~$d,e\in\mathbb{Z}$, and these operations are naturally extended to the point~$\{(+\infty,-\infty)\}$.

    An explicit isomorphism in \eqref{eq: hardening of B[x]} is provided by the map
    \[
    \frac{f}{g}\mapsto\big(\textrm{ord}_0f,\deg f-\deg g\big).
    \]
    This is easily verified to be a well-defined surjective semiring morphism; notice that the fact that it preserves addition follows from the observation that all semi-invertible elements of $\mathbb{B}[x]$ have order of vanishing $0$ at~$0$, by their explicit description in \cref{ex: non-hard semiring} or \cref{lem: semi-invertibles in polynomial ring}.
    
    Let us then prove injectivity.
    Since there is a unique element mapping to~$(+\infty,-\infty)$, we can assume that $f_1/g_1$ and $f_2/g_2$ have the same image in~$\mathbb{N}\times\mathbb{Z}$.
    On the one hand, $f_1$ and $f_2$ have the same order of vanishing at~$0$, so that there exists $k\in\mathbb{N}$ and $h_1,h_2\in\mathbb{B}[x]$ with
    \[
    f_1=x^k(1+h_1)\quad\text{and}\quad f_2=x^k(1+h_2).
    \]
    On the other hand, the equality on the difference of degrees implies that
    \[
    \deg(1+h_1)-\deg g_1
    =
    \deg (1+h_2)-\deg g_2.
    \]
    Then, for $d$ large enough the semi-invertible element $u=1+x+\ldots+x^{d}$ of $\mathbb{B}[x]$ satisfies
    \[
    u(1+h_1)g_2=u(1+h_2)g_1,
    \]
    from which~$uf_1g_2=uf_2g_1$.
    Then, the two fractions coincide.
\end{example}

Many more examples of hard semirings are obtained by localization at primer kernels.

\begin{lemma}\label{lem: localization at prime kernel is hard}
    Let $\p$ be a prime kernel of a semiring~$A$, and let $A_{\p}$ be the corresponding localization.
    Then $A_{\p}$ is a hard semiring.
\end{lemma}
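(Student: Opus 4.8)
The plan is to imitate the proof of \cref{prop: hardening is hard}, with the monoid of semi-invertible elements replaced by $S:=A\setminus\p$, which is a saturated multiplicative submonoid of $A$ precisely because $\p$ is prime. First I would observe that, since $S$ is saturated, an element $a/s$ of $A_\p=S^{-1}A$ is invertible if and only if $a\in S$: indeed $a/s$ is a unit exactly when $a/1_A$ is (as $s/1_A$ is always a unit, with inverse $1_A/s$), and by \cref{lem: Saturation} this is equivalent to $a\in S^{\mathrm{sat}}=S$. Hence, to prove that $A_\p$ is hard, it suffices to show that every semi-invertible element $a/s\in A_\p$ satisfies $a\notin\p$.

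Next I would unwind semi-invertibility. If $a/s$ is semi-invertible, then $1_A\in\overline{(a/s)A_\p}$, so there exist $b/u,c/v\in A_\p$ with
\[
\frac{1_A}{1_A}+\frac{a}{s}\cdot\frac{b}{u}=\frac{a}{s}\cdot\frac{c}{v}.
\]
Reducing to a common denominator and using the definition of equality in the localization, one obtains an element $w\in S$ and an identity
\[
ws^2uv+a\,(wsvb)=a\,(wsuc)
\]
in $A$. Since $a\,(wsvb)$ and $a\,(wsuc)$ both lie in the ideal $aA$, this identity shows, directly from the definition of the subtractive closure (\cref{def: subtractive closure}), that $ws^2uv$ belongs to $\overline{aA}$.

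Finally I would argue by contradiction. Suppose $a\in\p$. As $\p$ is an ideal, $aA\subseteq\p$; as $\p$ is a kernel it is subtractive, so $\overline{aA}\subseteq\overline{\p}=\p$, because $\overline{aA}$ is the smallest subtractive ideal containing $aA$. Hence $ws^2uv\in\p$. But $w,s,u,v\in S=A\setminus\p$ and $S$ is multiplicatively closed, so $ws^2uv\in S$, i.e.\ $ws^2uv\notin\p$, a contradiction. Therefore $a\notin\p$, and by the first step $a/s$ is invertible in $A_\p$; since $a/s$ was an arbitrary semi-invertible element, $A_\p$ is hard.

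I do not expect a genuine conceptual obstacle here: the argument runs parallel to that of \cref{prop: hardening is hard}. The only points requiring care are the bookkeeping in the localization identity — correctly producing the witness $w\in S$ and tracking the denominators so that the resulting equation lands in $A$ — together with the (standard) observation that a prime kernel coincides with its own subtractive closure, which is what lets the contradiction go through.
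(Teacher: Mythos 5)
Your proposal is correct and follows essentially the same route as the paper: unwind semi-invertibility of $a/s$ into an identity $ws^2uv + a(wsvb)=a(wsuc)$ in $A$ with $w,s,u,v\in A\setminus\p$, conclude that $ws^2uv\in\overline{aA}$, and use that $\p$ is subtractive to force $a\notin\p$, which by saturation of $A\setminus\p$ makes $a/s$ invertible. The extra bookkeeping you supply (the reduction via \cref{lem: Saturation} and the explicit contradiction) just fills in details the paper leaves implicit by referring back to \cref{prop: hardening is hard}.
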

\begin{proof}
    We follow the same argument as in \cref{prop: hardening is hard}.
    Suppose we have a semi-invertible element~$a/s$ of~$A_{\p}$, and we want to show that~$a\notin \p$. Since~$a/s$ is semi-invertible,  there exist $b,c\in A$ and $u,v, w\notin\p$, such that 
    $s^2uvw + a (bsvw)=a(csuw)$.
    This implies that $s^2uvw$ belongs to the kernel generated by~$a$.
    But $s^2uvw\notin \p$, hence $a\notin \p$ since $\p$ is subtractive. 
\end{proof}

\section{Spectrum constructions}\label{sec: spectrum constructions}

In this section, we consider two types of spectra associated to a semiring, and equip them with a suitable topology, emulating the usual construction in algebraic geometry.

Scheme theory corresponding to the spectrum using prime ideals has been carried out in several articles, including \cite{LORSCHEID20121804}, \cite[Section 3]{Giansiracusa2X_2016} and \cite[Section 2]{JUN2017306}, while the $k$-spectrum (or saturated spectrum) as a topological space was considered in \cite{LESCOT20121004}.

Following the common practice, we use gothic letters to denote prime ideals of a semiring.

\subsection{Spectrum and $k$-spectrum}

Let us start with the set-theoretical definitions.

\begin{definition}\label{def: spectra}
    Let $A$ be a semiring.
    The \emph{spectrum of $A$} is the set
    \[
    \Spec A=\{\mathfrak{p}\subseteq A\mid \mathfrak{p} \text{ is a prime ideal}\}.
    \]
    The \emph{$k$-spectrum} (or, the \emph{saturated spectrum}) \emph{of $A$} is
    \[
    \Sp A=\{\mathfrak{p}\subseteq A\mid \mathfrak{p} \text{ is a prime subtractive ideal}\}.
    \]
\end{definition}

Note that when $A$ is a ring, these two notions coincide, by \cref{rem: subtractive ideals in rings}.
In general, we have that~$\Sp A\subseteq\Spec A$, and the containment can be strict, as we next show.

\begin{example}\label{ex: Sp and Spec of N}
    Let us determine the spectrum and the $k$-spectrum of the semiring of natural numbers~$\mathbb{N}$, by listing all of its prime ideals.

    Since $\{0\}$ is obviously a prime ideal, assume $\mathfrak{p}\subseteq\mathbb{N}$ is a nontrivial prime ideal, and let $p$ be the minimal nonzero element of~$\mathfrak{p}$: it must be prime, since otherwise $\mathfrak{p}$ would contain a proper prime factor of~$p$, contradicting its minimality.
    In particular, it can happen that~$\mathfrak{p}=p\mathbb{N}$, which is actually easily verified to be a prime ideal.
    Assume this is not the case; therefore $\mathfrak{p}$ contains an integer that is not a multiple of~$p$.
    Using primality, we deduce that $\mathfrak{p}$ actually contains a prime~$q\neq p$. 
    Then, by additivity,
    \[
    \mathfrak{p}\supseteq p\mathbb{N}\cup (q+p\mathbb{N})\cup (2q+p\mathbb{N})\cup\ldots\cup\big((p-1)q+p\mathbb{N}\big).
    \]
    Since $p$ and $q$ are coprime, the set $\{0,q,2q,\ldots,(p-1)q\}$ exhausts the residue classes modulo~$p$, thus the ideal $\mathfrak{p}$ contains all integers larger than~$(p-1)q$.
    In particular, for all $n\neq 1$ we can take a sufficiently large power of $n$ that belongs to $\mathfrak{p}$ and apply primality to deduce that~$n\in\mathfrak{p}$.
    We conclude that~$\mathfrak{p}\supseteq\mathbb{N}\setminus\{1\}$.
     
    Combining with \cref{ex: a prime ideal of N}, we conclude that
    \[
    \Spec \mathbb{N}=\{\{0\}\}\cup\{p\mathbb{N}\mid p\text{ prime}\}\cup\{\mathbb{N}\setminus\{1\}\}.
    \]
    Let us now determine the $k$-spectrum of~$\mathbb{N}$.
    The ideal $\{0\}$ is obviously subtractive.
    It follows from \cref{prop: k-ideals} that ideals of the form $p\mathbb{N}$ are also subtractive, since they are the kernel of the natural map $\mathbb{N}\to\mathbb{Z}/p\mathbb{Z}$ taking any natural number to its class modulo~$p$.
    On the contrary, the ideal $\mathbb{N}\setminus\{1\}$ is not subtractive, as follows from \cref{rem: relation between kernel and quotient} and \cref{prop: k-ideals}.
    Hence, we conclude that
    \[
    \Sp \mathbb{N}=\{\{0\}\}\cup\{p\mathbb{N}\mid p\text{ prime}\}.
    \]
\end{example}

\begin{remark}
    Let $F$ be a semifield.
    By \cref{rem: ideals of semifields}, we have
    \[
    \Spec F=\Sp F=\{\{0_F\}\}.
    \]
    This is the case, for instance, when $F$ is the boolean domain $\mathbb{B}$ or the tropical semiring $\mathbb{T}$ from \cref{ex: semirings}.
\end{remark}

The difference between the spectrum and the $k$-spectrum can be even more drastic than in \cref{ex: Sp and Spec of N}.
To see this, we prove the following result, of independent interest, which realizes the $k$-spectrum of an idempotent semiring as the set of its morphisms to the boolean domain.

\begin{proposition}\label{lem: Sp of idempotent semiring}
    If $A$ is idempotent, there is a bijection $\Sp A\cong \operatorname{Hom} (A,\mathbb{B})$.
\end{proposition}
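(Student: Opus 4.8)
The plan is to construct the bijection explicitly in both directions. Given a morphism $f\colon A\to\mathbb{B}$, the kernel $\ker(f)=f^{-1}(0_{\mathbb{B}})$ is a prime subtractive ideal: it is subtractive by \cref{prop: k-ideals} (being a kernel), and it is prime because $\mathbb{B}\setminus\{0_{\mathbb{B}}\}=\{1_{\mathbb{B}}\}$ is a multiplicative submonoid, so $\ker(f)\neq A$ and $f(ab)=0_{\mathbb{B}}$ forces $f(a)=0_{\mathbb{B}}$ or $f(b)=0_{\mathbb{B}}$. So $f\mapsto\ker(f)$ gives a well-defined map $\operatorname{Hom}(A,\mathbb{B})\to\Sp A$. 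Conversely, given $\p\in\Sp A$, I would define a map $\chi_{\p}\colon A\to\mathbb{B}$ by $\chi_{\p}(a)=0_{\mathbb{B}}$ if $a\in\p$ and $\chi_{\p}(a)=1_{\mathbb{B}}$ if $a\notin\p$, and check it is a semiring homomorphism.

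The main work is verifying that $\chi_{\p}$ is a homomorphism, and this is where idempotency enters. Multiplicativity is immediate from $\p$ being prime (so $A\setminus\p$ is a multiplicative submonoid) together with $\p$ being an ideal; the unit and zero conditions are clear since $1_A\notin\p$ and $0_A\in\p$. The subtle point is additivity: I must show $\chi_{\p}(a+b)=\chi_{\p}(a)\lor\chi_{\p}(b)$, i.e.\ that $a+b\in\p$ if and only if both $a\in\p$ and $b\in\p$. The "if" direction holds because $\p$ is an ideal. For the converse, suppose $a+b\in\p$; since $\p$ is an ideal, $a(a+b)=a^2+ab\in\p$ and also $ab\in\p$... more directly: since $A$ is idempotent, $a\preceq a+b$ (as $a+(a+b)=a+b$ by idempotency applied to... ) — here I would invoke \cref{rmk: Kernels for idempotent semirings}, which says that in an idempotent semiring a subtractive ideal is exactly a down-closed subset for the order $\preceq$. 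Since $a+(a+b)=(a+a)+b=a+b$, we have $a\preceq a+b$, so $a\in\p$; symmetrically $b\in\p$. This is the key step, and it is precisely the obstruction that fails for non-idempotent semirings (e.g.\ $\mathbb{N}$, where $1+1=2$ lies in $2\mathbb{N}$ but $1\notin 2\mathbb{N}$), explaining why the hypothesis is needed.

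Finally I would check the two constructions are mutually inverse. Starting from $\p\in\Sp A$, the kernel of $\chi_{\p}$ is by construction $\{a\in A\mid\chi_{\p}(a)=0_{\mathbb{B}}\}=\p$, so $\ker(\chi_{\p})=\p$. Starting from $f\colon A\to\mathbb{B}$, I must show $\chi_{\ker(f)}=f$: for $a\in\ker(f)$ both sides are $0_{\mathbb{B}}$, and for $a\notin\ker(f)$ we have $f(a)\neq 0_{\mathbb{B}}$, hence $f(a)=1_{\mathbb{B}}=\chi_{\ker(f)}(a)$ since $\mathbb{B}=\{0_{\mathbb{B}},1_{\mathbb{B}}\}$. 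This establishes the bijection. I expect the only genuine obstacle to be the additivity verification above; everything else is routine unwinding of definitions.
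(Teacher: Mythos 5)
Your proof is correct and follows essentially the same route as the paper: identify $\ker(f)$ with a prime subtractive ideal via \cref{lem: preimage of ideals} and \cref{prop: k-ideals}, and conversely build the characteristic map $\chi_\p$, with idempotency plus subtractivity (i.e.\ the down-closedness from \cref{rmk: Kernels for idempotent semirings}) guaranteeing additivity. You merely spell out in more detail the additivity step and the mutual-inverse check that the paper leaves implicit.
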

\begin{proof}
    The ideal $\{0\}$ is prime in the boolean domain~$\mathbb{B}$.
    Therefore, for any morphism~$f\colon A\to \mathbb{B}$, the kernel of $f$ is a prime subtractive ideal of $A$ because of \cref{lem: preimage of ideals} and \cref{prop: k-ideals}. 
    
    Conversely, for a given $\p\in \Sp A$ define a map $A\to\mathbb{B}$ by
    \[
    a\longmapsto
    \begin{cases}
    0 &\text{if }a\in \p,
    \\1&\text{otherwise}.
    \end{cases}
    \]
    It is a homomorphism of semirings; the fact that it preserves addition follows from the assumptions that $\p$ is a subtractive ideal and $A$ is idempotent. 
\end{proof}


\begin{corollary}\label{cor: Sp for polynomials}
Let $F$ be an idempotent semifield.
Then
\[
\Sp F[x_1,\ldots,x_n]
=
\{\langle x_j\rangle_{j\in J}\mid J\subseteq\{1,\ldots,n\}\}.
\]
\end{corollary}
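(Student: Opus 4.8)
The plan is to combine \cref{lem: Sp of idempotent semiring} with an explicit computation of $\operatorname{Hom}(F[x_1,\ldots,x_n],\mathbb{B})$. Since $F[x_1,\ldots,x_n]$ is idempotent (being a polynomial semiring over the idempotent semifield $F$, by \cref{ex: semirings}), the proposition gives a bijection $\Sp F[x_1,\ldots,x_n]\cong \operatorname{Hom}(F[x_1,\ldots,x_n],\mathbb{B})$, and the plan is to trace through this bijection to identify which primes on the right-hand side are of the claimed form $\langle x_j\rangle_{j\in J}$.

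First I would analyze $\operatorname{Hom}(F[x_1,\ldots,x_n],\mathbb{B})$. A homomorphism $\varphi$ out of a polynomial semiring is determined by its restriction $\varphi|_F\colon F\to\mathbb{B}$ together with the values $\varphi(x_1),\ldots,\varphi(x_n)\in\mathbb{B}$. The key point is that $F$ is a semifield, so by \cref{rem: ideals of semifields} its only prime ideal — equivalently the kernel of $\varphi|_F$ — is $\{0_F\}$; hence $\varphi|_F$ is injective on $F\setminus\{0_F\}$, sending every nonzero element to $1$ (since $\varphi(1_F)=1_{\mathbb{B}}$ and the nonzero elements form a group mapping into the group $\mathbb{B}^\times=\{1\}$), and $0_F\mapsto 0$. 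So $\varphi|_F$ is forced. It then remains to see which assignments $x_j\mapsto \epsilon_j\in\{0,1\}$ extend to a well-defined homomorphism; since $F[x_1,\ldots,x_n]$ is the free idempotent-compatible object in the appropriate sense, every such assignment extends, giving $2^n$ homomorphisms indexed by subsets $J=\{j : \varphi(x_j)=0\}\subseteq\{1,\ldots,n\}$.

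Next I would compute the kernel of the homomorphism $\varphi_J$ corresponding to a subset $J$. A monomial $c\,x_1^{e_1}\cdots x_n^{e_n}$ (with $c\neq 0_F$) maps to $0$ precisely when some $x_j$ with $j\in J$ appears with positive exponent, i.e.\ when the monomial lies in $\langle x_j\rangle_{j\in J}$; a polynomial maps to $0$ iff every monomial in it does, because $\mathbb{B}$ is idempotent and $0$ is prime in $\mathbb{B}$, so a sum is $0$ iff all summands are $0$. Hence $\ker\varphi_J=\langle x_j\rangle_{j\in J}$ exactly. Conversely each $\langle x_j\rangle_{j\in J}$ is visibly a prime subtractive ideal (it is the kernel of $\varphi_J$, so subtractive by \cref{prop: k-ideals}, and prime since the quotient has no zero divisors — or one checks directly that its complement is multiplicatively closed). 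Chaining these identifications through \cref{lem: Sp of idempotent semiring} yields the claimed description.

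The main obstacle is the careful verification that every set-theoretic assignment $x_j\mapsto\epsilon_j$ genuinely extends to a semiring homomorphism $F[x_1,\ldots,x_n]\to\mathbb{B}$, and that the resulting map is well-defined; this amounts to checking that evaluation respects the polynomial semiring relations, for which the idempotency of $\mathbb{B}$ (so that the map $F\to\mathbb{B}$ collapsing $F\setminus\{0_F\}$ to $1$ is additive) is exactly what is needed, mirroring the additivity argument in the proof of \cref{lem: Sp of idempotent semiring}. Once that is in hand, the correspondence $J\leftrightarrow\langle x_j\rangle_{j\in J}$ is straightforward bookkeeping.
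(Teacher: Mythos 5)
Your proposal is correct and takes essentially the same route as the paper: both invoke \cref{lem: Sp of idempotent semiring}, observe that a morphism $F[x_1,\ldots,x_n]\to\mathbb{B}$ is forced to send every nonzero element of $F$ to $1$ (so it is determined by the images of the variables), and identify the kernel with $\langle x_j\rangle_{j\in J}$ for $J=\{j\mid \varphi(x_j)=0\}$. The only slip is the phrase ``$\varphi|_F$ is injective on $F\setminus\{0_F\}$'' --- you mean \emph{constant} equal to $1$ there, as your own next clause says --- and this does not affect the argument.
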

\begin{proof}
Since $F$ is idempotent and as seen in \cref{ex: semirings}, the semiring $F[x_1,\ldots,x_n]$ is also idempotent.
Therefore, because of the proof of \cref{lem: Sp of idempotent semiring} there is a bijection
\[
\operatorname{Hom} (F[x_1,\ldots,x_n],\mathbb{B})
\cong\Sp F[x_1,\ldots,x_n]
\]
identifying each morphism $f\colon F[x_1,\ldots,x_n]\to\mathbb{B}$ with its kernel.
Since any morphism of semirings sends invertible elements to invertible elements and $F$ is a semifield, $f$ maps every nonzero element of $F$ to~$1$.
As a consequence, $\ker(f)$ is seen to agree with the ideal generated by the set~$\{x_j\mid f(x_j)=0\}$.

Moreover, by the same reason a morphism from $F[x_1,\ldots,x_n]$ to $\mathbb{B}$ is equivalent to the choice of an image in $\mathbb{B}$ for each of the variables or, equivalently, to the choice of a subset of~$\{1,\ldots,n\}$.
\end{proof}

Let $F$ be an idempotent semifield.
In view of \cref{cor: Sp for polynomials}, mapping a subset $J\subseteq\{1,\ldots,n\}$ to the ideal generated by $\{x_j\mid j\in J\}$ realizes a bijection between the power set of~$\{1,\ldots,n\}$ and~$\Sp F[x_1,\ldots,x_n]$, and this identification respects inclusions.

In particular, $\Sp F[x_1,\ldots,x_n]$ is a finite set of cardinality~$2^n$.

\begin{example}\label{ex: Sp and Spec of B[x]}
    Thanks to \cref{cor: Sp for polynomials} we have that~$\Sp \mathbb{B}[x]=\{\{0\},\langle x\rangle\}$.
    Instead, $\Spec \mathbb{B}[x]$ is infinite and it actually contains infinitely increasing sequences of prime ideals, see \cite[Corollary 12]{alarcon1994commutative}.

For more results on the $k$-spectrum in the case of idempotent semirings, the reader can consult \cite{LESCOT20121004}. 

\end{example}

\subsection{The Zariski topology}
 
Let $A$ be a semiring.
We now define a topology on the spectra of $A$ from \cref{def: spectra} as done in classical scheme theory.
To do so, for a subset $S\subseteq A$ we set
\[
V(S) = \{\mathfrak{p} \in\Spec A \mid S \subseteq  \mathfrak{p}\}.
\]
We have the following properties.

\begin{lemma}\label{lem: closed subsets of Spec}
    In the above notation:
    \begin{enumerate}
        \item for a family $(S_i)_{i\in I}$ of subsets of $A$ we have $\bigcap_{i\in I}V(S_i)= V\big(\bigcup_{i\in I} S_i\big)$;
        \item for all $S_1,S_2\subseteq A$ we have $V(S_1)\cup V(S_2)= V(S_1 \cdot S_2)$;
        \item $V(\emptyset)= \Spec A$ and $V(A)= \emptyset$.
    \end{enumerate}
\end{lemma}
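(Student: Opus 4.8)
The plan is to verify the three items by unwinding the definition of $V(S)$, with the only genuine input being the defining properties of prime ideals (absorption of products and primality). None of the steps requires any machinery beyond \cref{def: spectra} and the definition of an ideal.

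For item (1), I would argue purely set-theoretically: a prime ideal $\mathfrak{p}$ lies in $\bigcap_{i\in I}V(S_i)$ if and only if $S_i\subseteq\mathfrak{p}$ for every $i\in I$, which is equivalent to $\bigcup_{i\in I}S_i\subseteq\mathfrak{p}$, i.e.\ to $\mathfrak{p}\in V\big(\bigcup_{i\in I}S_i\big)$. For item (3), I would note that every prime ideal contains $\emptyset$, so $V(\emptyset)=\Spec A$; and since a prime ideal is by definition proper, no $\mathfrak{p}\in\Spec A$ satisfies $A\subseteq\mathfrak{p}$, whence $V(A)=\emptyset$.

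Item (2) is the one place where primality is actually used, so I would treat it carefully but briefly. For the inclusion $V(S_1)\cup V(S_2)\subseteq V(S_1\cdot S_2)$: if $\mathfrak{p}\supseteq S_1$, then for any $s_1\in S_1$, $s_2\in S_2$ we have $s_1s_2\in\mathfrak{p}$ because $\mathfrak{p}$ is an ideal and hence absorbs multiplication by $A$; thus $S_1\cdot S_2\subseteq\mathfrak{p}$, and symmetrically if $\mathfrak{p}\supseteq S_2$. For the reverse inclusion, suppose $\mathfrak{p}\in V(S_1\cdot S_2)$ but $\mathfrak{p}\notin V(S_1)$; pick $s_1\in S_1\setminus\mathfrak{p}$. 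Then for every $s_2\in S_2$ we have $s_1s_2\in S_1\cdot S_2\subseteq\mathfrak{p}$, and since $\mathfrak{p}$ is prime and $s_1\notin\mathfrak{p}$, we conclude $s_2\in\mathfrak{p}$; hence $S_2\subseteq\mathfrak{p}$, i.e.\ $\mathfrak{p}\in V(S_2)$.

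I do not anticipate any real obstacle: the proof is a direct translation of the classical argument for the Zariski topology on $\Spec$ of a ring, and the semiring setting changes nothing, since $V(S)$ is defined over prime \emph{ideals} and the relevant closure properties (absorption and primality) are exactly the same. If desired, one could remark that the identical argument, using \cref{lem: preimage of subtractive ideal} to stay within subtractive ideals, shows the analogous statement for $\Sp A$, but this is not needed for the statement as given.
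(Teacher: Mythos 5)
Your proof is correct and follows exactly the paper's argument: items (1) and (3) are the same direct unwinding of definitions (which the paper simply declares obvious), and your treatment of item (2) — absorption for one inclusion, picking $s_1\in S_1\setminus\mathfrak{p}$ and applying primality for the other — is precisely the proof given in the paper. No issues.
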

\begin{proof}
    The first and third assertions are obvious.
    For the second, it is clear that if $S_1\subseteq\mathfrak{p}$ or $S_2\subseteq\mathfrak{p}$ then $S_1S_2\subseteq\mathfrak{p}$.
    Conversely, assume that $S_1S_2\subseteq\mathfrak{p}$ and suppose, for example, that~$\mathfrak{p}\notin V(S_1)$.
    Then, arguing as in \cite[Lemma~2.1]{Hartshorne}, there is $s_1\in S_1$ with~$s_1\notin\mathfrak{p}_1$.
    For all $s_2\in S_2$ we have that $s_1 s_2\in\mathfrak{p}$ and so by primality $s_2\in\mathfrak{p}$; this implies that $S_2\subseteq\mathfrak{p}$.
\end{proof}

In particular, \cref{lem: closed subsets of Spec} ensures that the collection
\begin{equation}\label{eq: closed set of Spec}
    \{V(S)\mid S\subseteq A\}
\end{equation}
satisfies the axioms of the closed sets of a topology on~$\Spec A$.

\begin{definition}
    The \emph{Zariski topology on $\Spec A$} is the topology whose closed sets are those of the form $V(S)$ as in \eqref{eq: closed set of Spec}.
    Similarly, the subset topology of $\Sp A\subseteq\Spec A$ is called the \emph{Zariski topology on~$\Sp A$}.
\end{definition}

Throughout these notes, the sets $\Spec A$ and $\Sp A$ will always be considered as topological spaces equipped with their Zariski topology. 

\begin{remark}
Let $A$ be an idempotent semiring.
We can define a topology on the set $\Hom(A,\mathbb{B})$ by declaring that a subset is closed if and only if it is of the form
\[
\{f\in\Hom(A,\mathbb{B})\mid f(a)\leq g(a)\text{ for all }a\in A\}
\]
for some arbitrary function~$g\colon A\to\mathbb{B}$, that is not necessarily a semiring homomorphism.
This choice of the topology on $\Hom(A,\mathbb{B})$ turns the bijection of \cref{lem: Sp of idempotent semiring} into a homeomorphism. 
\end{remark}


\begin{remark}\label{rem: dimension of semiring spectrum}
    The topological spaces $\Spec A$ and~$\Sp A$, despite being defined similarly, can be quite different.
    
    For example, let us consider the semiring $\mathbb{N}$ of natural numbers and denote by $\mathfrak{m}=\mathbb{N}\setminus\{1\}$ its unique maximal prime ideal.
    Thanks to the explicit description in \cref{ex: Sp and Spec of N} and in view of the definition of the Zariski topology, the only irreducible closed subsets of $\Spec \mathbb{N}$ are the singleton~$\{\mathfrak{m}\}$, the subsets of the form $\{p\mathbb{N},\mathfrak{m}\}$ for some prime~$p$, and the entire~$\Spec\mathbb{N}$.
    It follows that
    \[
    \dim\Spec \mathbb{N}=2,
    \]
    where $\dim$ stands for the topological dimension.
    On the contrary, there is a canonical homeomorphism between $\Sp \mathbb{N}$ and the usual affine scheme~$\Spec\mathbb{Z}$, and so
    \[
    \dim \Sp \mathbb{N}=1.
    \]

    The situation is even more extreme for the semiring~$\mathbb{B}[x]$.
    Indeed, it follows from \cref{ex: Sp and Spec of B[x]} that $\Sp\mathbb{B}[x]$ has topological dimension~$1$, while $\Spec\mathbb{B}[x]$ has infinite topological dimension.
\end{remark}

As in usual algebraic geometry, and in particular to define a structure sheaf on the spaces $\Spec A$ and~$\Sp A$, it is useful to have a basis of open sets for their Zariski topology.
To this end, for each $a\in A$ we consider the \emph{principal open set}
\[
D(a) = \{\p \in \Spec A \mid a \notin \p  \} = \Spec A \setminus V(\{a\}).
\]
The importance of sets of this kind is due to the following property.

\begin{lemma}\label{lem: principal open sets are basis for Zariski topology of Spec}
    The family $\{D(a)\mid a\in A\}$ is a basis for the Zariski topology of $\Spec A$ and it satisfies
    \[
    D(ab)=D(a)\cap D(b)\quad\text{ and }\quad D(a+b) \subseteq D(a)\cup D(b)
    \]
    for every~$a,b\in A$.  
    Moreover, a subset $S\subseteq A$ gives a covering of $\Spec A$ by the principal open sets $\{D(a)\}_{a\in S}$, that is \[\Spec A= \bigcup_{a\in S} D(a),\]
    if and only if the ideal generated by $S$ is equal to~$A$.
    In particular:
    \begin{enumerate}
    \item $D(a)=\Spec A$ if and only if $a$ is invertible;
    \item for any covering $\Spec A=\bigcup_{a\in S} D(a)$ by principal open sets, there exists a finite subset $T\subseteq S$ such that $\Spec A=\bigcup_{a\in T} D(a)$.
    \end{enumerate}
    
\end{lemma}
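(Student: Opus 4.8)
The plan is to prove the four assertions in the order stated. For the basis claim, note that every open subset of $\Spec A$ has the form $\Spec A\setminus V(S)$ for some $S\subseteq A$; by \cref{lem: closed subsets of Spec} we have $V(S)=\bigcap_{s\in S}V(\{s\})$, hence $\Spec A\setminus V(S)=\bigcup_{s\in S}D(s)$, and since each $D(s)$ is open this exhibits every open set as a union of principal opens (and $D(1_A)=\Spec A$, so their union is the whole space). For the two set-theoretic relations I would argue pointwise: for a prime ideal $\p$ one has $ab\notin\p$ if and only if $a\notin\p$ and $b\notin\p$ — the implication $ab\notin\p\Rightarrow(a\notin\p\text{ and }b\notin\p)$ because $\p$ is an ideal, the converse by primality — which gives $D(ab)=D(a)\cap D(b)$; and if $a+b\notin\p$ then $a$ and $b$ cannot both lie in $\p$ since $\p+\p\subseteq\p$, which gives $D(a+b)\subseteq D(a)\cup D(b)$.

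For the covering criterion, the key observation is that $V(S)=V(\langle S\rangle)$, where $\langle S\rangle$ denotes the ideal generated by $S$: a prime ideal contains $S$ if and only if it contains every finite $A$-linear combination of elements of $S$. Thus $\Spec A=\bigcup_{a\in S}D(a)$ is equivalent to $V(\langle S\rangle)=\emptyset$, i.e.\ to no prime ideal containing $\langle S\rangle$. If $\langle S\rangle=A$ this is immediate, since $1_A$ lies in no proper ideal. For the converse I would use Krull's theorem, \cref{thm: radical of ideals}: if $\langle S\rangle$ is proper then $1_A\notin\rad(\langle S\rangle)=\bigcap_{\langle S\rangle\subseteq\p}\p$, so this intersection cannot be taken over the empty family (whose intersection is all of $A$), and therefore some prime ideal contains $\langle S\rangle$. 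This step — extracting a prime ideal over a given proper ideal — is the only place where anything beyond unwinding definitions is used, and is where I expect the (modest) difficulty to lie.

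Finally, the two consequences follow quickly from the criterion. For (1), take $S=\{a\}$, so $\langle S\rangle=aA$, and combine with \cref{rem: ideal generated by invertibles}, which identifies $aA=A$ with $a$ being invertible. For (2), a covering $\Spec A=\bigcup_{a\in S}D(a)$ forces $1_A\in\langle S\rangle$, hence a finite expression $1_A=\sum_{i=1}^{n}b_ia_i$ with $a_i\in S$ and $b_i\in A$; setting $T=\{a_1,\dots,a_n\}$ one has $1_A\in\langle T\rangle$, so $\langle T\rangle=A$, and the criterion applied to $T$ gives $\Spec A=\bigcup_{a\in T}D(a)$.
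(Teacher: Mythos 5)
Your proof is correct, and apart from one step it follows the same route as the paper: the identity $V(S)=V(\langle S\rangle)$, the reduction of the covering criterion to the (non)existence of a prime containing $\langle S\rangle$, and the derivation of (1) from \cref{rem: ideal generated by invertibles} and of (2) from a finite expression $1_A=\sum b_ia_i$ all match. The two genuine variations are minor but worth noting. First, for the basis claim you write every open set directly as $\Spec A\setminus V(S)=\bigcup_{s\in S}D(s)$ using \cref{lem: closed subsets of Spec}, whereas the paper verifies the "separating" characterization of a basis (given $Z$ closed and $\p\notin Z$, find $D(a)$ with $\p\in D(a)$ and $D(a)\cap Z=\emptyset$); these are equivalent, and your version is arguably more transparent. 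Second, and more substantively, for the only nontrivial step --- producing a prime ideal containing a given proper ideal --- the paper invokes Zorn's lemma directly (every proper ideal lies in a maximal, hence prime, ideal), while you deduce it from Krull's theorem (\cref{thm: radical of ideals}): if $\langle S\rangle$ is proper then $1_A\notin\rad(\langle S\rangle)=\bigcap_{\langle S\rangle\subseteq\p}\p$, so the family of primes over $\langle S\rangle$ is nonempty. Your route has the advantage of citing a result already stated in the paper rather than an unproved "well-known consequence of Zorn's lemma" (whose semiring version, that maximal ideals are prime, does require a small argument); the cost is that it leans on the full strength of \cref{thm: radical of ideals}, which of course itself rests on the same Zorn-type existence statement. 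Both arguments are sound.
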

\begin{proof}
        The sets in the family are open by definition.
        To show they form a basis for the Zariski topology we have to show that for all closed subset $Z\subseteq\Spec A$ and for any point $\mathfrak{p}\notin Z$ there exists $a\in A$ such that $\mathfrak{p}\in D(a)$ and~$D(a)\cap Z=\emptyset$.
        By definition, it happens that $Z=V(S)$ for a subset $S\subseteq A$ and $S\nsubseteq \mathfrak{p}$; it is then sufficient to take $a\in S\setminus\mathfrak{p}$ to conclude.
        
        The facts that $D(ab)=D(a)\cap D(b)$ and $D(a+b) \subseteq D(a)\cup D(b)$ are immediate from the definition and \cref{lem: closed subsets of Spec}.

        If $S$ generates~$A$, then there is no prime ideal containing $S$, which means that~$V(S)=\emptyset$.
        It follows from \cref{lem: closed subsets of Spec}.(1) that $\Spec A=\bigcup_{a\in S} D(a)$.

        Conversely, by the well-known consequence of Zorn's lemma that any proper ideal of $A$ is contained in a maximal (so, prime) ideal of~$A$, if $S$ does not generate~$A$, then there exists a prime ideal $\mathfrak{p}\in \Spec A$ that contains~$S$.
        So $\mathfrak{p}\notin D(a)$ for all~$a\in S$, and therefore $\Spec A\neq  \bigcup_{a\in S} D(a)$.
        
        By the previous assertion, $D(a)=\Spec A $ is equivalent to the equality~$aA=A$, and then to the fact that $a$ is invertible because of \cref{rem: ideal generated by invertibles}.

        Finally, notice that if $S\subseteq A$ generates the unit ideal, then there exists a finite subset $T\subseteq S$ that generates the unit ideal: as $1_A$ is in the ideal generated by $S$, there exists $a_i\in S$ and $b_i\in A$ for $i=1,\dots, n$ such that $1_A=a_1b_1+\cdots a_nb_n$. Then we can take $T=\{a_1,\dots,a_n\}$. 
\end{proof}

Similarly, for a given element $a\in A$, we will denote
\[
\bD(a) = D(a)\cap \Sp A=\{\p \in \Sp A \mid a \notin \p  \}.
\]
The $k$-spectral version of \cref{lem: principal open sets are basis for Zariski topology of Spec} reads as follows. 

\begin{lemma}\label{lem: principal open sets are basis for Zariski topology of Sp}
    The family $\{\bD(a)\mid a\in A\}$ is a basis for the Zariski topology of~$\Sp A$ and it satisfies
    \[
    \bD(ab)=\bD(a)\cap \bD(b)\quad\text{ and }\quad\bD(a+b) \subseteq \bD(a)\cup \bD(b)
    \]
    for every~$a,b\in A$. Moreover, if $b$ is in the subtractive ideal generated by $a$, then $\bD(b)\subseteq \bD(a)$.
    
    Finally, a subset $S\subseteq A$ gives a covering of $\Sp A$ by the open principal sets $\{\bD(a)\}_{a\in S}$
    if and only if the subtractive ideal generated by $S$ is equal to $A$.
    In particular:
    \begin{enumerate}
    \item $\bD(a)=\Sp A$ if and only if $a$ is semi-invertible;
    \item for any covering $\Sp A=\bigcup_{a\in S} \bD(a)$ by principal open sets, there exists a finite subset $T\subseteq S$ such that $\Sp A=\bigcup_{a\in T} \bD(a)$.
    \end{enumerate}
\end{lemma}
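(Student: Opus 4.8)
The plan is to deduce the purely topological assertions from the corresponding facts about $\Spec A$ in \cref{lem: principal open sets are basis for Zariski topology of Spec}, and to prove the covering criterion by a Zorn-type argument whose only genuinely semiring-theoretic ingredient is that maximal proper kernels are prime. First, $\bD(a)=D(a)\cap\Sp A$ by definition, so $\{\bD(a)\mid a\in A\}$ is the family of traces on the subspace $\Sp A$ of a basis of $\Spec A$, hence a basis of the Zariski topology of $\Sp A$; intersecting the identities $D(ab)=D(a)\cap D(b)$ and $D(a+b)\subseteq D(a)\cup D(b)$ with $\Sp A$ gives the two displayed formulas. For the monotonicity statement, if $b$ lies in the subtractive ideal $\overline{aA}$ and $\p$ is a prime kernel with $a\in\p$, then $aA\subseteq\p$ and subtractivity of $\p$ forces $\overline{aA}\subseteq\p$, so $b\in\p$; contrapositively $\bD(b)\subseteq\bD(a)$.

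The heart of the proof is the covering criterion. Write $I$ for the subtractive ideal generated by $S$; a prime kernel contains $S$ if and only if it contains $I$, so $\{\bD(a)\}_{a\in S}$ covers $\Sp A$ exactly when no prime kernel contains $I$. If $I=A$, no proper ideal, in particular no prime kernel, contains it, so the sets $\bD(a)$ cover. For the converse I would apply Zorn's lemma to the poset of proper kernels containing $I$: it is nonempty, and the union of a chain of proper kernels is again a proper kernel, being an ideal for the usual reason, subtractive because any relation $x+y=z$ with $y,z$ in the union already holds inside one member of the chain, and proper as $1_A$ lies in no member. Let $\mathfrak m$ be a maximal element; the key claim is that $\mathfrak m$ is prime. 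Given $ab\in\mathfrak m$ with $a\notin\mathfrak m$, the colon set $J=\{x\in A\mid xa\in\mathfrak m\}$ is an ideal, it is subtractive (from $x+y=z$ with $y,z\in J$ one gets $xa+ya=za$ with $ya,za\in\mathfrak m$, whence $xa\in\mathfrak m$ by subtractivity of $\mathfrak m$), and it contains $\mathfrak m$ and $b$; if $b\notin\mathfrak m$ then maximality forces $J=A$, so $a=1_A\cdot a\in\mathfrak m$, a contradiction. Thus $\mathfrak m$ is a prime kernel containing $I$, and it belongs to no $\bD(a)$ with $a\in S$.

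Finally, item (1) is the case $S=\{a\}$ of the criterion, together with the definition of semi-invertibility as $\overline{aA}=A$. For item (2), if the sets $\bD(a)$ with $a\in S$ cover $\Sp A$, then $1_A$ lies in the subtractive closure of $\langle S\rangle$, say $1_A+i=j$ with $i,j\in\langle S\rangle$; as $i$ and $j$ involve only finitely many elements of $S$, there is a finite $T\subseteq S$ with $i,j\in\langle T\rangle$, so $1_A$ lies in the subtractive closure of $\langle T\rangle$, which is therefore all of $A$, and the criterion shows $\{\bD(a)\}_{a\in T}$ already covers $\Sp A$. The step I expect to be the real obstacle is the primality of maximal proper kernels — essentially the statement that every nonzero semiring has a prime kernel — which is exactly where subtractivity is indispensable: the $\Spec$ version only needed that maximal ideals are prime, but a maximal ideal of a semiring need not be a kernel, so the colon-ideal argument above has to be supplied.
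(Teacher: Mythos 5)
Your proof is correct and follows essentially the same route as the paper, which deduces the topological assertions from the $\Spec$ version and then says only that ``one needs to show that any proper subtractive ideal is contained in a maximal subtractive ideal, hence in a prime subtractive ideal, by using again Zorn's lemma.'' Your colon-ideal argument that a maximal proper kernel is prime (with subtractivity of $J=\{x\mid xa\in\mathfrak m\}$ inherited from that of $\mathfrak m$) is precisely the detail the paper leaves implicit, and it is carried out correctly.
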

\begin{proof}
    The first three assertions are a direct consequence of the corresponding properties in \cref{lem: principal open sets are basis for Zariski topology of Spec} and the definition of the topology of~$\Sp A$ as the subspace topology.

    If $b$ is in the subtractive ideal generated by $a$, and $a\in  \p\in \Sp A$, then $b\in \p$, as $\p$ is subtractive, which shows that $\bD(b)\subseteq \bD(a)$.

    The rest of the claims are shown using the same arguments as in the ideal case.
    In particular, similarly to the proof of \cref{lem: principal open sets are basis for Zariski topology of Spec}, one needs to show that any proper subtractive ideal is contained in a maximal subtractive ideal, hence in a prime subtractive ideal, by using again Zorn's lemma.
\end{proof}

Finally, for idempotent semirings we can use \cref{lem: Sp of idempotent semiring} to give a description of the principal open sets in its $k$-spectrum.  

\begin{lemma}\label{lem: principal opens for idempotents}
Let $A$ be an idempotent semiring. 
Given any $a\in A$, the natural bijection $\Sp A\cong \Hom(A,\mathbb{B})$ induces a bijection \[\bD(a) \cong \{f\colon A\to \mathbb{B}\mid f(a)=1 \}.\]

    Moreover, for any $a,b\in A$, we have $\bD(a)\cup \bD(b)=\bD(a+b)$.
\end{lemma}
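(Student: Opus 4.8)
The plan is to deduce both assertions directly from the explicit form of the bijection $\Sp A \cong \Hom(A,\mathbb{B})$ constructed in the proof of \cref{lem: Sp of idempotent semiring}, which sends a prime subtractive ideal $\p$ to the homomorphism $f_\p\colon A\to\mathbb{B}$ with $f_\p(x)=0$ exactly when $x\in\p$, and a homomorphism $f$ to its kernel. For the first statement, I would simply note that, under this correspondence, the condition $a\notin\p$ cutting out $\bD(a)$ becomes $f_\p(a)=1$, and conversely $f(a)=1$ forces $a\notin\ker(f)$; hence the bijection of \cref{lem: Sp of idempotent semiring} restricts to a bijection from $\bD(a)$ onto $\{f\colon A\to\mathbb{B}\mid f(a)=1\}$, with nothing further to check.

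For the second statement, one inclusion, namely $\bD(a+b)\subseteq\bD(a)\cup\bD(b)$, already holds in any semiring by \cref{lem: principal open sets are basis for Zariski topology of Sp}, so only the reverse inclusion uses idempotency. The cleanest route is to invoke the first part: any $f\in\Hom(A,\mathbb{B})$ is additive, and addition in $\mathbb{B}$ is $\lor$, so $f(a+b)=f(a)\lor f(b)$, which equals $1$ precisely when $f(a)=1$ or $f(b)=1$; transporting this equality of subsets of $\Hom(A,\mathbb{B})$ back through the bijection gives $\bD(a+b)=\bD(a)\cup\bD(b)$ at once. Alternatively, one can argue intrinsically: if $\p\in\bD(a)$ then $a\preceq a+b$ (since $a+(a+b)=a+b$ by idempotency), so $a+b\in\p$ would force $a\in\p$ by down-closedness of the subtractive ideal $\p$ (see \cref{rmk: Kernels for idempotent semirings}), a contradiction; thus $a+b\notin\p$, and symmetrically for $\p\in\bD(b)$.

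I do not expect any real obstacle: both parts are unwindings of definitions. The only point worth flagging is that idempotency is genuinely needed for the nontrivial inclusion in the second part, as is visible in either argument, and that the behaviour $f(a+b)=f(a)\lor f(b)$ of homomorphisms to $\mathbb{B}$ is what makes the principal opens of an idempotent $k$-spectrum close under the operation coming from addition.
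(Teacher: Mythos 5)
Your proposal is correct and follows essentially the same route as the paper: unwind the bijection from \cref{lem: Sp of idempotent semiring} for the first part, then use $f(a+b)=f(a)+f(b)$ in $\mathbb{B}$ for the second. The additional intrinsic argument via down-closedness of subtractive ideals in an idempotent semiring is a nice alternative, but not a substantive departure.
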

\begin{proof}
Recall that the bijection $\Sp A\cong \Hom(A,\mathbb{B})$ from \cref{lem: Sp of idempotent semiring} identifies a morphism $f\colon A\to \mathbb{B}$ with its kernel.
Therefore, the prime ideal corresponding to $f$ lies in $\bD(a)$ if and only if $f(a)\neq 0$, equivalently~$f(a)=1$.  

    The last claim is deduced from the fact that $f(a+b)=f(a)+f(b)=1$ if and only if at least one between $f(a)$ and $f(b)$ is equal to~$1$. 
\end{proof}

\subsection{Induced morphisms}

Let us conclude the section with the construction of morphisms between semiring spectra from their algebraic counterpart.
Let $f\colon A \to B$ be a morphism of semirings, and consider the map
\[
\Spec(f)\colon \Spec B \longrightarrow \Spec A
\]
defined by mapping a prime ideal $\mathfrak{p}$ of $B$ to $f^{-1}(\mathfrak{p})$.
This map is well-defined thanks to \cref{lem: preimage of ideals} and it restricts to a map
\[
\Sp(f)\colon \Sp B \longrightarrow \Sp A
\]
because of \cref{lem: preimage of subtractive ideal}.

\begin{lemma}\label{lem: continuity of morphisms}
    The maps $\Spec(f)$ and $\Sp(f)$ are continuous.
\end{lemma}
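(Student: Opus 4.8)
The plan is to verify continuity by computing the preimages of a basis of open sets, using the principal open sets from \cref{lem: principal open sets are basis for Zariski topology of Spec} and \cref{lem: principal open sets are basis for Zariski topology of Sp}.

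First I would establish that for every $a\in A$,
\[
\Spec(f)^{-1}\big(D(a)\big) = D(f(a)).
\]
This is immediate from the definitions: a prime ideal $\p$ of $B$ lies in the left-hand side exactly when $a\notin f^{-1}(\p)$, i.e. when $f(a)\notin \p$, which is precisely the condition $\p\in D(f(a))$. Since the family $\{D(a)\mid a\in A\}$ is a basis for the Zariski topology of $\Spec A$ by \cref{lem: principal open sets are basis for Zariski topology of Spec}, and the preimage of each basic open set is open (indeed, again a basic open set), the map $\Spec(f)$ is continuous. Equivalently, one could argue with closed sets, using $\Spec(f)^{-1}(V(S)) = V(f(S))$ for $S\subseteq A$, which follows from the chain of equivalences $S\subseteq f^{-1}(\p) \iff f(S)\subseteq \p$.

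For $\Sp(f)$, recall that $\bD(a) = D(a)\cap \Sp A$ and that $\Sp B$ carries the subspace topology induced from $\Spec B$. Intersecting the identity above with $\Sp B$ yields
\[
\Sp(f)^{-1}\big(\bD(a)\big) = \bD(f(a)),
\]
and since the sets $\bD(a)$ form a basis for the topology of $\Sp A$ by \cref{lem: principal open sets are basis for Zariski topology of Sp}, continuity of $\Sp(f)$ follows. Alternatively, one simply observes that $\Sp(f)$ is the restriction–corestriction of the continuous map $\Spec(f)$ along the subspace inclusions $\Sp B\hookrightarrow\Spec B$ and $\Sp A\hookrightarrow\Spec A$, hence continuous.

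There is no genuine obstacle here; the only point requiring a little care is to carry out the argument on a basis of open sets (or on closed sets of the form $V(S)$) rather than attempting to describe the preimage of an arbitrary open subset directly.
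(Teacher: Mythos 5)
Your proof is correct and takes essentially the same approach as the paper: the paper computes $\Spec(f)^{-1}(V(S))=V(f(S))$ for closed sets and reduces $\Sp(f)$ to $\Spec(f)$ via the subspace topology, both of which you also give (you simply lead with the equivalent basic-open computation $\Spec(f)^{-1}(D(a))=D(f(a))$ before noting the closed-set version).
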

\begin{proof}
    Since the restriction of a continuous map to a subset equipped with the subset topology is again continuous, it is enough to check the continuity of~$\Spec(f)$.
    To show the latter, let $S$ be any subset of $A$ and notice that
    \[
    \big(\Spec(f)\big)^{-1}(V(S))
    =
    \big\{\mathfrak{p}\in\Spec B\mid S\subseteq f^{-1}(\mathfrak{p})\big\}
    =
    V(f(S)).
    \]
    In other words, the preimage of a closed set is closed, and $\Spec(f)$ is continuous.
\end{proof}

In particular, $\Spec$ and $\Sp$ yield contravariant functors from the category of semirings to the one of topological spaces.

We can apply \cref{lem: continuity of morphisms} to two cases of special interest.
To this end, recall that for a multiplicative submonoid $S$ of a semiring $A$ we denote by $\varphi_S\colon A\to S^{-1}A$ the corresponding localization homomorphism.

\begin{lemma}\label{lem: spectral morphism associated to localization}
    For any semiring $A$ and a multiplicative submonoid $S$ of~$A$, the map $\Spec(\varphi_S)$ induces a homeomorphism
    \[\Spec S^{-1}A \cong\{\mathfrak{p}\in\Spec A\mid \mathfrak{p}\cap S=\emptyset\}.\]
    The same property holds for $\Sp$.
\end{lemma}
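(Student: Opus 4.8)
The plan is to follow the classical argument from commutative algebra essentially verbatim, checking at each step that nothing breaks for semirings. First I would recall that $\Spec(\varphi_S)$ sends a prime $\mathfrak{q}$ of $S^{-1}A$ to $\varphi_S^{-1}(\mathfrak{q})$, which is a prime ideal of $A$ disjoint from $S$: indeed no element of $S$ can lie in $\varphi_S^{-1}(\mathfrak{q})$ since its image in $S^{-1}A$ is a unit, and units cannot lie in a prime ideal by \cref{rem: ideals of semifields}-style reasoning (or directly: $\mathfrak{q} \ne S^{-1}A$ together with \cref{rem: ideal generated by invertibles}). So the map lands in the claimed subset. For the inverse, given a prime $\mathfrak{p}$ of $A$ with $\mathfrak{p}\cap S=\emptyset$, I would send it to the ideal $S^{-1}\mathfrak{p}=\{a/s \mid a\in\mathfrak{p},\,s\in S\}$ of $S^{-1}A$, and check this is prime: if $(a/s)(b/t)\in S^{-1}\mathfrak{p}$ then $abu\in\mathfrak{p}$ for some $u\in S$, and since $u\notin\mathfrak{p}$ and $\mathfrak{p}$ is prime (so $A\setminus\mathfrak{p}$ is a saturated multiplicative monoid) we get $ab\in\mathfrak{p}$, hence $a\in\mathfrak{p}$ or $b\in\mathfrak{p}$. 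Then I would verify the two assignments are mutually inverse: $\varphi_S^{-1}(S^{-1}\mathfrak{p})=\mathfrak{p}$ uses again that $\mathfrak{p}\cap S=\emptyset$ plus saturatedness of $A\setminus\mathfrak p$, and $S^{-1}(\varphi_S^{-1}(\mathfrak{q}))=\mathfrak{q}$ is a direct unwinding of definitions.

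Having established the bijection, continuity of $\Spec(\varphi_S)$ is \cref{lem: continuity of morphisms}, so it remains to see the inverse is continuous, i.e.\ that $\Spec(\varphi_S)$ is a closed (or open) map onto its image. The cleanest route is to observe that $\Spec(\varphi_S)$ identifies principal open sets compatibly: for $a\in A$ one has $\big(\Spec(\varphi_S)\big)^{-1}\big(D(a)\big)=D(\varphi_S(a))$, and conversely every principal open $D(a/s)$ of $\Spec S^{-1}A$ equals $D(\varphi_S(a)/1)=D(\varphi_S(a))$ since $s/1$ is a unit and $D(\text{unit})$ is everything by \cref{lem: principal open sets are basis for Zariski topology of Spec}.(1), combined with $D(xy)=D(x)\cap D(y)$. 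Since principal opens form a basis on both sides (\cref{lem: principal open sets are basis for Zariski topology of Spec}) and they correspond to each other under the bijection, the bijection is a homeomorphism onto the subspace $\{\mathfrak{p}\in\Spec A\mid \mathfrak p\cap S=\emptyset\}$.

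Finally, for the $\Sp$ statement I would check that the bijection restricts correctly, i.e.\ that $\mathfrak{p}\in\Sp A$ with $\mathfrak p\cap S=\emptyset$ corresponds to a \emph{subtractive} prime $S^{-1}\mathfrak{p}$ and vice versa. One direction is \cref{lem: preimage of subtractive ideal} applied to $\varphi_S$. For the other, if $\mathfrak p$ is subtractive and $a/s+b/t=c/u$ with $b/t,c/u\in S^{-1}\mathfrak p$, then clearing denominators gives an equality in $A$ of the form $a v w' + (\text{element of }\mathfrak p)=(\text{element of }\mathfrak p)$ for suitable elements of $S$, whence $a\cdot(\text{something})\in\mathfrak p$; since the multiplier lies in $S$ hence outside $\mathfrak p$, primality gives $a\in\mathfrak p$, so $a/s\in S^{-1}\mathfrak p$. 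I expect this last bookkeeping with the subtractive condition under localization to be the only genuinely delicate point; everything else is the standard ring-theoretic argument with "saturated multiplicative monoid" substituted for "complement of a prime."
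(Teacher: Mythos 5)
Your proposal is correct and follows essentially the same route as the paper: continuity comes from \cref{lem: continuity of morphisms}, openness from the correspondence of principal open sets under $\Spec(\varphi_S)$, and bijectivity onto $\{\mathfrak{p}\mid\mathfrak{p}\cap S=\emptyset\}$ via the inverse $\mathfrak{p}\mapsto S^{-1}\mathfrak{p}$ (which the paper delegates to Golan's Proposition~11.15 rather than writing out). Your explicit verification that $S^{-1}\mathfrak{p}$ is subtractive when $\mathfrak{p}$ is, via clearing denominators and using primality to cancel the factor from $S$, correctly fills in the step the paper compresses into ``the proof works the same for $\Sp$.''
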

\begin{proof}
    The continuity of $\Spec(\varphi_S)$ follows from \cref{lem: continuity of morphisms}.
    Moreover, for all $a\in A$ and $s\in S$ it is easily checked that a prime ideal of $S^{-1}A$ contains $a/s$ if and only if it contains~$\varphi_S(a)$.
    Then,
    \begin{equation}\label{eq: image of principal open by localization}
        \Spec(\varphi_S)(D(a/s))=D(a)
    \end{equation}
    and so the map is open.
    It is also clear that it is injective and that its image is $\{\mathfrak{p}\in\Spec A\mid \mathfrak{p}\cap S=\emptyset\}$, see for instance \cite[Proposition 11.15]{Golan1999}.
    The proof works the same for~$\Sp$.
\end{proof}

\begin{lemma}\label{lem: hardering on Sp}
    For a semiring $A$, consider the hardening morphism $\chi\colon A\to A^\diamondsuit$. Then the map $\Sp(\chi)\colon\Sp A^\diamondsuit \to \Sp A$ is a homeomorphism. 
\end{lemma}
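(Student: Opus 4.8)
The plan is to deduce the statement directly from \cref{lem: spectral morphism associated to localization}, applied to the saturated multiplicative submonoid $\wA A$ of semi-invertible elements, combined with the observation that no prime subtractive ideal can meet $\wA A$.

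First I would recall that, by definition \eqref{eq: definition of hardening}, we have $A^\diamondsuit = (\wA A)^{-1}A$ and $\chi = \varphi_{\wA A}$, and that $\wA A$ is a saturated multiplicative submonoid by \cref{lem: semiinvertibles are saturated multiplicative monoid}. Hence \cref{lem: spectral morphism associated to localization} applies with $S = \wA A$ and tells us that $\Sp(\chi)$ induces a homeomorphism of $\Sp A^\diamondsuit$ onto the subspace
\[
\{\p \in \Sp A \mid \p \cap \wA A = \emptyset\}
\]
of $\Sp A$, equipped with the subspace topology.

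It then remains only to check that this subspace is all of $\Sp A$. Let $\p \in \Sp A$; since $\p$ is a prime ideal it is in particular proper, so $\p \neq A$. If $\p$ contained a semi-invertible element, then, being a subtractive ideal containing a semi-invertible element, it would satisfy $\p = A$ by \cref{rem: subtractive ideals with semi-invertble elements are full semiring}, a contradiction. Therefore $\p \cap \wA A = \emptyset$ for every $\p \in \Sp A$, so the displayed subspace equals $\Sp A$, and $\Sp(\chi)\colon \Sp A^\diamondsuit \to \Sp A$ is a homeomorphism.

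There is essentially no obstacle to overcome: the content is entirely packaged in \cref{lem: spectral morphism associated to localization} (which reduces the claim to a statement about which primes survive the localization) and in \cref{rem: subtractive ideals with semi-invertble elements are full semiring} (which handles the surjectivity by ruling out prime subtractive ideals meeting $\wA A$). The only point deserving a word of care is precisely this last surjectivity onto $\Sp A$.
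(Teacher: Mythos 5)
Your proposal is correct and follows exactly the paper's own argument: apply \cref{lem: spectral morphism associated to localization} to the saturated submonoid $\wA A$ of semi-invertible elements, then note via \cref{rem: subtractive ideals with semi-invertble elements are full semiring} that no proper subtractive (in particular, prime subtractive) ideal meets $\wA A$, so the image is all of $\Sp A$. Nothing is missing.
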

\begin{proof}
    Applying \cref{lem: spectral morphism associated to localization} in our case, we get a homeomorphism \[\Sp A^\diamondsuit\cong \{\p \in \Sp A\mid \p \cap \wA A=\emptyset\},\]
    where $S(A)$ is the multiplicative submonoid of semi-invertible elements of $A$ from \cref{lem: semiinvertibles are saturated multiplicative monoid}.
    Now, the set on the right-hand side agree with $\Sp A$ since a proper subtractive ideal of $A$ cannot contain any semi-invertible element by \cref{rem: subtractive ideals with semi-invertble elements are full semiring}.
\end{proof}

\section{The structure sheaf for \texorpdfstring{$\Spec$}{}}\label{sec: structure sheaf for Spec}

We fix for the entire section a semiring $A$ and we let $X = \Spec A$ be the associated spectrum, seen as a topological space like in \cref{sec: spectrum constructions}.

The goal of this section is to define a suitable \emph{structure sheaf} $\mathcal{O}_X$ on~$X$.
In fact, we give three equivalent definition for such a sheaf.
Their analogue for~$\Sp A$ will be studied in \cref{sec: structure sheaf for Sp}.

Throughout, recall that given a prime ideal $\p \subseteq A$ we denote by $A_\p = ( A \setminus \p)^{-1}A$ the corresponding localization.

\subsection{Three constructions of $\mathcal{O}_X$}\label{subsec: three constructions of the structure sheaf on Spec}

Given any open subset $U\subseteq X$, we set
\begin{equation}\label{eq: multiplicative submonoid associate to open U}
    S_U = \{b\in A\mid D(b)\supseteq U \}\subseteq A.
\end{equation}
As in usual algebraic geometry, this can be seen as the subset of elements of $A$ which, thought of as functions on~$X$, never vanish on~$U$.
In the particular case in which $U=D(a)$ for some $a\in A$ we write for short\[S_a=S_{D(a)}.\]

The next lemma describes the general algebraic structure of this set and makes it explicit in the case in which $U$ is a principal open set.

\begin{lemma}\label{lem: submonoid of open set}
For any open subset~$U\subseteq X$, the set $S_U$ is a saturated multiplicative submonoid of~$A$.
Moreover, for all $a\in A$, we have that
\[S_{a}=(a^\mathbb{N})^{\mathrm{sat}}=\{b\in A \mid \exists c\in A \text{ and } \exists n\in \mathbb{N} \text{ with } bc = a^n \} \] 
is the saturation of the multiplicative submonoid generated by~$a$.
\end{lemma}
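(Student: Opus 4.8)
The plan is to verify the monoid-with-saturation axioms for $S_U$ directly from the combinatorial identities for principal open sets established in \cref{lem: principal open sets are basis for Zariski topology of Spec}, and then to identify $S_a$ with the radical $\rad(bA)$ by means of Krull's theorem (\cref{thm: radical of ideals}).

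For the first assertion I would argue as follows. Since $1_A$ is invertible, $D(1_A)=\Spec A\supseteq U$, so $1_A\in S_U$. If $b_1,b_2\in S_U$, then $D(b_1b_2)=D(b_1)\cap D(b_2)\supseteq U$, whence $b_1b_2\in S_U$; this is closure under products. For saturation, suppose $b_1b_2\in S_U$: from $D(b_1)\cap D(b_2)=D(b_1b_2)\supseteq U$ we get both $D(b_1)\supseteq U$ and $D(b_2)\supseteq U$, i.e. $b_1,b_2\in S_U$. No extra idea is needed here. (In fact openness of $U$ plays no role.)

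For the second assertion I would first recall that, by the definition of saturation applied to the multiplicative submonoid $a^{\mathbb{N}}=\{a^n\mid n\in\mathbb{N}\}$, one has $(a^{\mathbb{N}})^{\mathrm{sat}}=\{b\in A\mid \exists\, c\in A \text{ with } bc\in a^{\mathbb{N}}\}$, which is exactly the set on the right-hand side. I would then prove the two inclusions. For $(a^{\mathbb{N}})^{\mathrm{sat}}\subseteq S_a$: if $bc=a^n$ and $\p\in D(a)$, then $a\notin\p$, so $a^n=bc\notin\p$ because $A\setminus\p$ is multiplicatively closed, and since $\p$ is an ideal this forces $b\notin\p$; hence $\p\in D(b)$, giving $D(a)\subseteq D(b)$ and $b\in S_a$. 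For the reverse inclusion, note that $b\in S_a$ means $D(a)\subseteq D(b)$, equivalently $V(\{b\})\subseteq V(\{a\})$, i.e. every prime ideal containing $b$ (equivalently, containing $bA$) also contains $a$; thus $a\in\bigcap_{bA\subseteq\p}\p$, which by \cref{thm: radical of ideals} equals $\rad(bA)$. Therefore $a^n\in bA$ for some $n\ge 0$, say $a^n=bc$ with $c\in A$, so $b\in(a^{\mathbb{N}})^{\mathrm{sat}}$.

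There is no real obstacle in this proof; the one point deserving care is the passage from the topological inclusion $D(a)\subseteq D(b)$ to the algebraic statement $a\in\rad(bA)$, which is precisely where \cref{thm: radical of ideals} is used, the rest being bookkeeping with \cref{lem: principal open sets are basis for Zariski topology of Spec}. I would also make sure to allow $n=0$ in the exponent, so that the degenerate case $D(b)=\Spec A$ (that is, $b$ invertible) is subsumed without separate treatment.
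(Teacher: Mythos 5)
Your proof is correct and follows essentially the same route as the paper: the monoid and saturation axioms for $S_U$ are read off from $D(ab)=D(a)\cap D(b)$, and the hard inclusion $S_a\subseteq(a^{\mathbb{N}})^{\mathrm{sat}}$ is obtained from Krull's theorem (\cref{thm: radical of ideals}) exactly as in the paper. The only (harmless) divergence is in the easy inclusion, where you check directly on primes that $bc=a^n$ forces $D(a)\subseteq D(b)$, whereas the paper instead deduces $(a^{\mathbb{N}})^{\mathrm{sat}}\subseteq S_a$ from the minimality of the saturation in \cref{lem: Saturation} combined with the already-established fact that $S_a$ is a saturated submonoid containing $a$.
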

\begin{proof}
    First, notice that $1_A\in S_U$, since $D(1_A)=X\supseteq U$.
    We need to show that for all $a,b\in A$ one has $ab\in S_U$ if and only if $a\in S_U$ and~$b\in S_U$.
    Using the equality $D(ab)=D(a)\cap D(b)$ from \cref{lem: principal open sets are basis for Zariski topology of Spec}, this is equivalent to
    \[D(a)\cap D(b) \supseteq U \iff D(a)\supseteq U \text{ and } D(b)\supseteq U,\]
    which is clear. 

    To show the last claim, first observe that $a\in S_a$ clearly.
    Since $S_a$ is a saturated multiplicative submonoid, $a^{\mathbb{N}}\subseteq S_a$ and so $(a^\mathbb{N})^{\mathrm{sat}}\subseteq S_a$ by minimality of the saturation, see \cref{lem: Saturation}.

    In order to show the reverse inclusion, take $b\in S_a$, so that $D(b)\supseteq D(a)$. This is equivalent to say that  the  prime ideals of $A$ that contain $b$ also contain~$a$, or, equivalently, 
    that \[ a\in \bigcap_{\substack{b\in\p\\\p\in \Spec A}} \p = \rad(bA),\]
    where the last equality is \cref{thm: radical of ideals}. Therefore, there exists $n\ge 0$ and $c\in A$ such that $a^n=bc$, so $b$ is in the saturation of $a^\mathbb{N}$. 
    \end{proof}

Notice that the localization of $A$ at the multiplicative submonoid generated by $a$ is what is usually called~$A_a=A[a^{-1}]$.
So, \cref{lem: submonoid of open set} implies that $S_a^{-1}A\cong A_a$ functorially. 

The next result shows that principal open sets are in fact affine open.

\begin{lemma}\label{lem: principal open are affine}
    For any $a\in A$, let $\varphi_a\colon A \to S_a^{-1}A$ be the localization map.
    Then the map $\Spec(\varphi_a)$ induces a homeomorphism $\Spec S_a^{-1}A \cong D(a)$.
\end{lemma}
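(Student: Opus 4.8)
The plan is to apply \cref{lem: spectral morphism associated to localization} with the multiplicative submonoid $S = a^{\mathbb{N}}$ generated by $a$, combined with the identification of $S_a^{-1}A$ with the localization at $a^{\mathbb{N}}$. First I would recall from \cref{lem: submonoid of open set} and the remark following it that $S_a = (a^{\mathbb{N}})^{\mathrm{sat}}$, and from \cref{lem: Saturation} that the natural map $(a^{\mathbb{N}})^{-1}A \to S_a^{-1}A$ is an isomorphism; hence $S_a^{-1}A \cong A_a$ and the corresponding localization maps agree up to this isomorphism. So it suffices to prove the statement with $\varphi_a$ replaced by the localization $\varphi\colon A \to (a^{\mathbb{N}})^{-1}A$.

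Next I would invoke \cref{lem: spectral morphism associated to localization} directly with $S = a^{\mathbb{N}}$: it gives that $\Spec(\varphi)$ is a homeomorphism from $\Spec (a^{\mathbb{N}})^{-1}A$ onto $\{\mathfrak{p} \in \Spec A \mid \mathfrak{p} \cap a^{\mathbb{N}} = \emptyset\}$. The only remaining point is that this image set is exactly $D(a)$. This is immediate: a prime ideal $\mathfrak{p}$ is disjoint from $a^{\mathbb{N}} = \{1_A, a, a^2, \ldots\}$ if and only if $a \notin \mathfrak{p}$ (if $a \in \mathfrak{p}$ then $a = a^1 \in \mathfrak{p} \cap a^{\mathbb{N}}$; conversely if $a^n \in \mathfrak{p}$ for some $n \geq 1$ then by primality $a \in \mathfrak{p}$, and $1_A \notin \mathfrak{p}$ since $\mathfrak{p} \neq A$), so $\{\mathfrak{p} \mid \mathfrak{p} \cap a^{\mathbb{N}} = \emptyset\} = D(a)$ by definition of the principal open set. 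Transporting along the isomorphism $S_a^{-1}A \cong (a^{\mathbb{N}})^{-1}A$, we conclude that $\Spec(\varphi_a)$ is a homeomorphism onto $D(a)$.

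There is essentially no hard part here: the lemma is a packaging of \cref{lem: spectral morphism associated to localization} and \cref{lem: submonoid of open set}, and the only thing to be careful about is checking that the functoriality of \cref{lem: Saturation} makes the localization maps compatible, so that the homeomorphism statement transfers cleanly between $S_a^{-1}A$ and $(a^{\mathbb{N}})^{-1}A$. If one preferred a self-contained argument, one could instead note that \eqref{eq: image of principal open by localization} already records $\Spec(\varphi_S)(D(a/s)) = D(a)$, which in the case $S = a^{\mathbb{N}}$, $s = 1_A$ gives $\Spec(\varphi_a)(D(a/1_A)) = D(a)$ together with $D(a/1_A) = \Spec S_a^{-1}A$, since $a/1_A$ is invertible in $S_a^{-1}A$; this is an alternative route to the same conclusion.
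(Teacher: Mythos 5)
Your proof is correct and uses the same two key ingredients as the paper, namely \cref{lem: spectral morphism associated to localization} and \cref{lem: submonoid of open set}; the only difference is a minor reorganization. The paper applies \cref{lem: spectral morphism associated to localization} directly to $S=S_a$ and then argues that $\p\cap S_a=\emptyset$ iff $a\notin\p$ using the explicit description of $S_a$, whereas you first apply the lemma to the smaller submonoid $a^{\mathbb{N}}$ (where the image is trivially $D(a)$) and then transport through the isomorphism $(a^{\mathbb{N}})^{-1}A\cong S_a^{-1}A$ furnished by \cref{lem: Saturation}; both routes are valid and rest on the same underlying facts.
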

\begin{proof}
    By \cref{lem: spectral morphism associated to localization} we already know that $\Spec(\varphi_a)$ induces a homeomorphism between $\Spec S_a^{-1}A$ and the set of primes $\p\in X$ such that~$\p\cap S_a=\emptyset$.
    But notice that if $b\in \p\cap S_a$, then by \cref{lem: submonoid of open set} there exists $c\in A$ such that $bc=a^n$ for some $n\in\mathbb{N}$.
    Therefore $a^n=bc\in \p$, so~$a\in\p$.
    Hence $\p\cap S_a=\emptyset$ if and only if~$a\notin\p$. 
\end{proof}

We are now ready to present three independent approaches to the construction of the structure sheaf $\mathcal{O}_X$ on~$X$.
To this end, let us recall that a presheaf $\mathcal{F}$ (of semirings) on a topological space is a sheaf if for any covering of an open sets $U$ by open sets $U_i$ for $i\in I$, that is $U=\bigcup_{i\in I} U_i$, the map 
\[\mathcal{F}(U)\longrightarrow \operatorname{Eq}\bigg(\prod_{i\in I} \mathcal{F}(U_i) \rightrightarrows \prod _{i,j\in I} \mathcal{F}(U_i\cap U_j)\bigg)\]
induced by restrictions is an isomorphism.
Here $\operatorname{Eq}$ denotes the equalizer: as a set, is formed by the elements $(f_i)_i\in \prod_{i\in I} \mathcal{F}(U_i)$ such that $f_i=f_j \in \mathcal{F}(U_i\cap U_j)$ for all~$i,j\in I$.

\subsection*{Approach A: Sheafification of the presheaf of localizations}

Consider the presheaf of semirings $\mathcal{F}$ on $X$ defined by
\begin{equation}\label{eq: presheaf in Approach A}
    \mathcal{F}(U)=S_U^{-1} A
\end{equation}
for all open subset~$U\subseteq X$, where $S_U$ is the saturated multiplicative submonoid associated to~$U$, as defined in \eqref{eq: multiplicative submonoid associate to open U}.
Intuitively, the semiring $\mathcal{F}(U)$ is obtained by inverting all functions which are nowhere zero on~$U$.

The presheaf $\mathcal{F}$ is not a sheaf in general (even for commutative rings, see \cref{ex: presheaf of localization can fail to be a sheaf} below), so we take the structure sheaf
\[
\mathcal{O}_X= \mathcal{F}^\#
\]
to be its sheafification.

\begin{example}\label{ex: presheaf of localization can fail to be a sheaf}
    Let $K$ be an algebraically closed field of characteristic~$0$, and let $A=K[t^2,t^3]$ be the subring of $K[t]$ consisting of the polynomials in $t$ with degree-one coefficient equal to~$0$.
    Then, the presheaf $\mathcal{F}$ defined above is not a sheaf on the spectrum of this ring.
    
    To show this, consider the point $\mathfrak{m}\in\Spec A$ given by the kernel of the evaluation morphism~$f\mapsto f(1)$.
    It is a maximal ideal and explicitly one has~$\mathfrak{m}=(t^2-1)A+(t^3-1)A$.
    Consider the open set $U=\Spec A \setminus \{\mathfrak{m}\}$. 

    Observe that~$S_U=K^{\times}$, as any polynomial $f\in S_U$ must have its set of roots inside~$\{1\}$, hence it must be of the form $f=\lambda (t-1)^n$ for some $n\ge 0$ and some $\lambda \in K^{\times}$.
    But if~$n>0$, then the degree-one coefficient of such $f$ is $\pm \lambda n\ne 0$, so $f$ would not belong to~$A$.

    Now, consider the covering $U=D(t^2-1)\cup D(t^3-1)$.
    We will give an element in the equalizer of the map 
    \[S^{-1}_{t^2-1} A \times S^{-1}_{t^3-1} A \rightrightarrows S^{-1}_{(t^2-1)(t^3-1)} A \]
    that is not coming form an element $f\in S_U^{-1}A=A$. The element is 
    \[ \left(\frac{t^3+t^2}{t^2-1},\frac{t^4+t^3+t^2}{t^3-1} \right).\]
    It is clear it is in the equalizer, as 
    \[\frac{t^3+t^2}{t^2-1}=\frac{t^4+t^3+t^2}{t^3-1}\in S^{-1}_{(t^2-1)(t^3-1)} A \]
    as both are equal to $t^2/(t-1) \in K(t)$. And for the same reason, none of them can be of the form $f/1$ for $f\in A$ in any of the local rings. 
\end{example}

\subsection*{Approach B: Extension of the sheaf of localizations defined on principal opens}

This approach consists in first constructing a sheaf on a basis for the Zariski topology of $X$ and then extending it to a sheaf on the whole spectrum, as explained in \cite[Section 6.30]{stacks-project}.

To this end, we consider the basis for the topology of $X$ given by principal open sets, as seen in \cref{lem: principal open sets are basis for Zariski topology of Spec}.
The presheaf $\mathcal{F}$ defined in \eqref{eq: presheaf in Approach A} has local sections on this basis given by 
\begin{equation}\label{eq: definition of sheaf for Spec on principal open sets}
    \mathcal{F}(D(a)) = S_{a}^{-1}A.
\end{equation}
for all $a\in A$.

As we will see in \cref{lem: SheafLemma} below, the presheaf $\mathcal{F}$ is actually a sheaf on the basis of principal open sets, and it can then be extended in a unique and functorial way to a sheaf $\mathcal{O}_X$ on $X$ by setting, for all open $U\subseteq X$ 
\begin{equation}\label{eq: sheaf from Approach B}
    \mathcal{O}_X(U)=\operatorname{Eq}\bigg(\prod_{i\in I} \mathcal{F}(D(a_i)) \rightrightarrows \prod _{i,j\in I} \mathcal{F}(D(a_ia_j))\bigg)
\end{equation}
for any covering $U=\bigcup_{i\in I} D(a_i)$.
The resulting semiring $\mathcal{O}_X(U)$ is independent of the chosen covering, by the sheaf property.
In particular, one has $\mathcal{O}_X(D(a))=\mathcal{F}(D(a))$ for all~$a\in A$ by construction \cite[Lemma~6.30.6]{stacks-project}.

\begin{remark}
Note that the assignment in \eqref{eq: definition of sheaf for Spec on principal open sets} is more suitable for formalization than declaring $\mathcal{F}(D(a)) = A_{a}$.
Indeed, this last choice would depend on the choice of $a$ rather than on~$D(a)$, in the sense that if $D(a)=D(b)$ then $A_a$ and $A_b$ are equal only up to an isomorphism of semirings, while by definition~$S_a=S_b$.
\end{remark}

Let us now show in details that the presheaf $\mathcal{F}$ is actually a sheaf on the basis of principal open sets of~$X$, in the sense of \cite[Definition~6.30.2]{stacks-project}.
This amounts to show the following statement.

\begin{lemma}\label{lem: SheafLemma}
    Given $a\in A$ and  $a_i \in A$ for $i\in I$ such that $\bigcup_{i\in I} D(a_i) = D(a)$, the morphism
    \[S^{-1}_{a} A \longrightarrow\operatorname{Eq}\bigg(\prod_{i\in I} S^{-1}_{a_i} A \rightrightarrows \prod _{i,j\in I}  S^{-1}_{a_ia_j} A\bigg)\]
    is an isomorphism.
\end{lemma}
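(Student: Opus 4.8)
The plan is to reduce this to the classical sheaf condition on the affine scheme $\Spec S_a^{-1}A$, exploiting the homeomorphism $\Spec S_a^{-1}A \cong D(a)$ from \cref{lem: principal open are affine}. First I would reduce to the case $a = 1_A$, i.e. $D(a) = X$: indeed, for general $a$ we may replace $A$ by the localization $B = S_a^{-1}A$. By \cref{lem: spectral morphism associated to localization} and \eqref{eq: image of principal open by localization}, the principal open sets $D(a_i)$ inside $D(a)$ correspond to principal open sets $D(b_i)$ in $\Spec B$, with $S_{a_i}^{-1}A \cong S_{b_i}^{-1}B$ functorially (and similarly for the intersections $D(a_ia_j)$, which equal $D(a_i)\cap D(a_j)$ by \cref{lem: principal open sets are basis for Zariski topology of Spec}). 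So it suffices to prove: whenever $\bigcup_{i\in I} D(b_i) = \Spec B$, the map $B \to \operatorname{Eq}\bigl(\prod_i S_{b_i}^{-1}B \rightrightarrows \prod_{i,j} S_{b_ib_j}^{-1}B\bigr)$ is an isomorphism.

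Next, using \cref{lem: principal open sets are basis for Zariski topology of Spec}.(2), I would pass to a finite subcover: a covering $\Spec B = \bigcup_{i\in I} D(b_i)$ admits a finite subset $T \subseteq I$ with $\Spec B = \bigcup_{i\in T} D(b_i)$. A standard cofinality argument shows that proving the sheaf condition for all finite covers by principal opens suffices to get it for arbitrary such covers (the equalizer over $I$ injects into, and is pinned down by, the equalizer over $T$ together with the gluing data; one checks compatibility on overlaps $D(b_i) \cap D(b_k)$ for $i \in I$, $k \in T$, themselves principal). So I reduce to a finite index set $I = \{1,\dots,n\}$ with $\langle b_1,\dots,b_n\rangle = B$ by \cref{lem: principal open sets are basis for Zariski topology of Spec}.

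The heart of the argument is then the finite case, which splits into injectivity and surjectivity onto the equalizer. For \textbf{injectivity}: if $x \in B$ maps to $0$ in every $S_{b_i}^{-1}B$, then for each $i$ there is $c_i \in B$ and $m_i \geq 0$ with $b_i^{m_i} c_i x = 0$ (using the explicit description of $S_{b_i}$ from \cref{lem: submonoid of open set}: $b_i^{m_i} \in S_{b_i}$, and $b_i^{m_i}x = 0$ in $S_{b_i}^{-1}B$ means some element of $S_{b_i}$ kills it, hence some power of $b_i$ does). Since the $b_i^{m_i}$ still generate the unit ideal (as $D(b_i^{m_i}) = D(b_i)$), writing $1_B = \sum_i d_i b_i^{m_i}$ gives $x = \sum_i d_i b_i^{m_i} x = 0$. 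For \textbf{surjectivity}: given a compatible family, represent the $i$-th component as $x_i / b_i^{m_i}$ with a common exponent $m$ (replacing $b_i$ by $b_i^m$ keeps the same principal open and the same generation of the unit ideal), so the section is $y_i / b_i^m$ with $y_i \in B$. Compatibility in $S_{b_ib_j}^{-1}B$ means $(b_ib_j)^{\ell}(b_j^m y_i - b_i^m y_j) = 0$ for some $\ell$; absorbing $b_j^\ell$ into $y_i$ and $b_i^\ell$ into $y_j$ (again harmless) we may assume $b_j^m y_i = b_i^m y_j$ for all $i,j$. Then with $1_B = \sum_j d_j b_j^m$, set $x = \sum_j d_j y_j$; one verifies $b_i^m x = \sum_j d_j b_i^m y_j = \sum_j d_j b_j^m y_i = y_i$, so $x$ restricts to $y_i/b_i^m$ in each $S_{b_i}^{-1}B$, as desired. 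The main obstacle, and the only place one must be careful, is that over a semiring there are no additive inverses, so each "clearing denominators / absorbing a power" step must be phrased purely multiplicatively and one cannot cancel differences — but in every instance above the manipulation only ever adds terms or multiplies through, so the classical argument goes through verbatim; the genuinely delicate bookkeeping is just the reduction from arbitrary to finite covers, which I would spell out carefully.
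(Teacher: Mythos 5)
Your overall architecture (localize to reduce to the global case $D(a)=X$, pass to a finite subcover, then run a Quillen-style ``partition of unity'' argument with $1=\sum d_i b_i^m$) matches the paper's, which divides the same work into sublemmas. But your injectivity step contains a genuine error that is exactly of the flavour you warn yourself about at the end, and it is one the paper itself flags early on.

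You prove injectivity by showing that if $x\in B$ maps to $0$ in every $S_{b_i}^{-1}B$ then $x=0$, i.e.\ that the map has trivial kernel. Over a semiring this is \emph{not} equivalent to injectivity: since there are no additive inverses, a semiring (or semimodule) morphism with kernel $\{0\}$ can still identify two distinct elements --- the paper even gives the explicit example $f\colon(\{0,0.5,1\},\max,\min)\to(\{0,1\},\max,\min)$ right after defining isomorphisms. What you actually need (and what the paper's \cref{subl: injectivity global} proves) is: if $x,y\in B$ have $x/1_B=y/1_B$ in every $S_{b_i}^{-1}B$, then $x=y$. Fortunately your computation adapts with no new ideas: for each $i$ there is $u_i\in S_{b_i}$ with $x u_i = y u_i$; the $D(u_i)$ still cover, so $1_B=\sum_i d_i u_i$ with finitely many nonzero $d_i$, and then $x=\sum d_i u_i x=\sum d_i u_i y=y$. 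But as written, your injectivity step proves the wrong statement.

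A second, more cosmetic issue: in the surjectivity step you write the compatibility condition as $(b_ib_j)^{\ell}\bigl(b_j^m y_i - b_i^m y_j\bigr)=0$, which involves a subtraction that does not exist in a semiring. You correctly flag that everything should be ``phrased purely multiplicatively,'' and indeed the intended equality $(b_ib_j)^{\ell} b_j^m y_i=(b_ib_j)^{\ell} b_i^m y_j$ is what the definition of equality of fractions gives, and the rest of the manipulation only adds and multiplies; so this is a notational slip, not a conceptual gap. Finally, your reduction from infinite to finite covers is left as a ``standard cofinality argument'': the point that deserves to be spelled out, and that the paper handles explicitly in \cref{subl: surjectivity global}, is that after constructing $x$ from the finite subcover one must verify that $x$ still restricts to the prescribed section on $D(b_i)$ for the indices $i$ \emph{outside} the finite subcover; this is done by covering $D(b_i)$ by the $D(b_ib_j)$ for $j$ in the finite subcover and invoking the already-established injectivity on principal opens.
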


We will divide the proof in several sublemmas.
We start by showing the injectivity in \cref{lem: SheafLemma} in the special case in which~$D(a)=X$.

\begin{sublemma}\label{subl: injectivity global}
    Given $a_i \in A$ for $i\in I$ such that $\bigcup_{i\in I} D(a_i) =  \Spec A$, the morphism 
    \[A \longrightarrow \prod_{i\in I} S^{-1}_{a_i} A\]
    is injective.     
\end{sublemma}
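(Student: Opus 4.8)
The plan is to unwind the definitions and reduce the statement to a finiteness argument about the covering. Suppose $b \in A$ maps to $0$ in every $S_{a_i}^{-1}A$; I want to show $b = 0$ in $A$. Since the map $A \to S_{a_i}^{-1}A$ is the localization homomorphism, $b$ dying in $S_{a_i}^{-1}A$ means there exists $s_i \in S_{a_i}$ with $s_i b = 0$ in $A$. By \cref{lem: submonoid of open set}, $s_i \in S_{a_i}$ means there exist $c_i \in A$ and $n_i \in \mathbb{N}$ with $s_i c_i = a_i^{n_i}$. Multiplying the equation $s_i b = 0$ by $c_i$ gives $a_i^{n_i} b = 0$ in $A$ for every $i \in I$.

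Next I would use the covering hypothesis. Since $\bigcup_{i\in I} D(a_i) = \Spec A$, by \cref{lem: principal open sets are basis for Zariski topology of Spec} the ideal generated by $\{a_i \mid i \in I\}$ is all of $A$, and moreover there is a finite subset $T \subseteq I$ with $\bigcup_{i \in T} D(a_i) = \Spec A$, so that $\{a_i \mid i \in T\}$ already generates the unit ideal. Then $\{a_i^{n_i} \mid i \in T\}$ also generates the unit ideal: indeed $D(a_i^{n_i}) = D(a_i)$, so $\bigcup_{i\in T} D(a_i^{n_i}) = \Spec A$, and \cref{lem: principal open sets are basis for Zariski topology of Spec} again gives that these elements generate $A$. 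Hence we can write $1_A = \sum_{i \in T} d_i a_i^{n_i}$ for suitable $d_i \in A$, and then
\[
b = 1_A \cdot b = \sum_{i \in T} d_i a_i^{n_i} b = \sum_{i \in T} d_i \cdot 0 = 0.
\]
This proves injectivity.

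I do not expect any serious obstacle here; the only subtle point is making sure one may reduce to a \emph{finite} subcover before extracting the linear combination expressing $1_A$, since an infinite sum is not available in a semiring — but this is exactly guaranteed by the quasi-compactness statement in \cref{lem: principal open sets are basis for Zariski topology of Spec}. One should also note that the argument uses only that $S_{a_i}$ is the saturation of $a_i^{\mathbb N}$ (to pass from $s_i b = 0$ to $a_i^{n_i} b = 0$) together with the fact that $\{D(a_i)\}$ covers, so it is genuinely parallel to the classical ring-theoretic proof and the absence of additive inverses causes no difficulty for this half of the sheaf axiom.
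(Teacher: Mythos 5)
Your argument has a genuine gap: you prove only that the map has trivial kernel, not that it is injective. You start from ``suppose $b$ maps to $0$ in every $S_{a_i}^{-1}A$'' and conclude $b=0$. For rings this suffices, but for semirings a homomorphism with kernel $\{0_A\}$ need not be injective — there are no additive inverses, so you cannot pass from ``$x$ and $y$ have the same image'' to ``$x-y$ maps to $0$.'' This paper explicitly flags the issue in \cref{sec: basic definitions} with the example $f\colon(\{0,0.5,1\},\max,\min)\to(\{0,1\},\max,\min)$, which is surjective with zero kernel but not injective. So your proof establishes a strictly weaker statement than the sublemma claims.

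The repair follows the same skeleton you used, but applied to a pair of elements: take $x,y\in A$ with $x/1_A=y/1_A$ in each $S_{a_i}^{-1}A$. By definition of the localization there exist $u_i\in S_{a_i}$ with $xu_i=yu_i$ in $A$. Since $D(u_i)\supseteq D(a_i)$, the sets $D(u_i)$ still cover $\Spec A$, so by \cref{lem: principal open sets are basis for Zariski topology of Spec} finitely many of the $u_i$ generate the unit ideal and one can write $1_A=\sum b_iu_i$ with almost all $b_i=0_A$. Then
\[
x=x\sum b_iu_i=\sum b_i(xu_i)=\sum b_i(yu_i)=y\sum b_iu_i=y.
\]
Your observations about quasi-compactness and the saturation $S_{a_i}=(a_i^{\mathbb{N}})^{\mathrm{sat}}$ are correct and the finite-subcover point is indeed the right subtlety on that side; the missing subtlety is the one about injectivity versus triviality of the kernel.
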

\begin{proof}
    Take $x,y \in A$ with $x/1_A=y/1_A$ in $S_{a_i}^{-1}A$ for every~$i$.
    Then, for all $i\in I$ there exist $u_i\in S_{a_i} $ such that~$x u_i = y u_i$. 
    
    Since $D(u_i)\supseteq D(a_i)$ and the opens $D(a_i)$ cover~$\Spec A$, we have that $\bigcup_{i\in I} D(u_i)=\Spec A$.
    Thus, as in the proof of \cref{lem: principal open sets are basis for Zariski topology of Spec} we can choose $b_i\in A$ with $b_i=0_A$ for all but a finite number of $i$'s, such that $1_A=\sum b_i u_i$.
    Then
    \[x= x\sum b_i u_i= \sum b_i x u_i=\sum b_i y u_i=y \sum b_i u_i= y,\]
    concluding the proof.
\end{proof}
  
In order to show the general case, we need to compare the presheaf on a principal open set $D(a)$ and the one on the space~$\Spec S_a^{-1}A $, which are homeomorphic by \cref{lem: principal open are affine}.
This is given by the following technical result. 

\begin{sublemma}\label{subl: localization of localization}
    Given $a,b\in A$ such that $D(a)\supseteq D(b)$, the restriction morphism $S_a^{-1}A \to S_b^{-1}B$ induces an isomorphism \[S^{-1}_{b/1_A} \left(S^{-1}_{a} A\right) \cong S^{-1}_{b} A.\]
\end{sublemma}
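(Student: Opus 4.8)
The plan is to prove the isomorphism by identifying both sides as localizations of $A$ at a single explicit saturated multiplicative submonoid and invoking \cref{lem: Saturation}, which tells us that localization at a monoid only depends on its saturation. First I would unwind the left-hand side: the iterated localization $S^{-1}_{b/1_A}(S_a^{-1}A)$ is the localization of $A$ at the multiplicative submonoid
\[
T = \{t\in A \mid \varphi_a(t)/1 \in (S^{-1}_{b/1_A}(S_a^{-1}A))^\times\}^{-1}A,
\]
or more concretely, a fraction in the double localization is of the form $(x/s)/(b/1_A)^n$ with $x\in A$, $s\in S_a$, $n\in\mathbb{N}$, and two such fractions agree iff they agree after clearing a further power of $b$ and a further element of $S_a$. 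So the double localization is naturally $A$ localized at the multiplicative submonoid generated by $S_a$ together with $b$; call this monoid $M$. By \cref{lem: Saturation} applied inside $A$, $S^{-1}_{b/1_A}(S_a^{-1}A) \cong M^{-1}A \cong (M^{\mathrm{sat}})^{-1}A$.

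Next I would compute $M^{\mathrm{sat}}$. Since $M$ is generated by $S_a \cup \{b\}$, its saturation $M^{\mathrm{sat}}$ consists of all $c\in A$ such that $cd = s b^n$ for some $d\in A$, some $s\in S_a$, some $n\in\mathbb{N}$. On the other hand, by \cref{lem: submonoid of open set}, $S_b = (b^\mathbb{N})^{\mathrm{sat}} = \{c\in A \mid cd = b^n \text{ for some } d, n\}$. The claim then reduces to the set equality $M^{\mathrm{sat}} = S_b$. The inclusion $S_b \subseteq M^{\mathrm{sat}}$ is immediate since $b\in M$. For the reverse inclusion, suppose $cd = s b^n$ with $s\in S_a$; by \cref{lem: submonoid of open set} again, $S_a = (a^\mathbb{N})^{\mathrm{sat}}$, so there is $e\in A$ and $m\in\mathbb{N}$ with $se = a^m$. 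Then $cde = a^m b^n$. The hypothesis $D(a)\supseteq D(b)$, i.e.\ $b\in \p \Rightarrow a\in\p$ for all primes $\p$, combined with \cref{thm: radical of ideals}, gives $a\in \rad(bA)$, so $a^k = bf$ for some $k\ge 1$ and $f\in A$; hence $a^m b^n$ is a multiple of $b^{n+?}$ — more carefully, $a^{mk} = b^m f^m$, so raising $cde = a^m b^n$ to the $k$-th power gives $c^k(de)^k = a^{mk}b^{nk} = b^{m+nk}f^m$, exhibiting $c^k$, hence $c$, in $S_b$ (using that $S_b$ is saturated). This establishes $M^{\mathrm{sat}} = S_b$ and hence the isomorphism.

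I would then remark that the isomorphism is the one induced by the universal property — the restriction morphism $S_a^{-1}A \to S_b^{-1}A$ sends $b/1_A$ to an invertible element (since $b\in S_b$), so it factors through $S^{-1}_{b/1_A}(S_a^{-1}A)$, and the resulting map is checked to be an isomorphism by comparing it with the canonical map $A\to S_b^{-1}A$ and using the universal properties, so that no ad hoc inverse needs to be written down. The main obstacle I anticipate is the bookkeeping in the direction $M^{\mathrm{sat}}\subseteq S_b$: one must carefully use both that $S_a$ is the saturation of $a^\mathbb{N}$ and the Krull-type statement \cref{thm: radical of ideals} to convert the topological hypothesis $D(a)\supseteq D(b)$ into the algebraic statement $a\in\rad(bA)$, and then chase powers to land in the saturated monoid $S_b$; everything else is a formal consequence of \cref{lem: Saturation} and the universal property of localization.
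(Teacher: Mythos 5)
Your strategy is sound and genuinely different from the paper's. The paper keeps the iterated localization as is: it proves the set identity $S_{b/1_A}=\{c/d \mid c\in S_b,\ d\in S_a\}$ inside $S_a^{-1}A$ and then produces two mutually inverse maps directly from the universal property of localization. You instead collapse both sides to localizations of $A$ itself --- the left-hand side at the monoid $M$ generated by $S_a\cup\{b\}$ (a transitivity-of-localization fact that does carry over verbatim to semirings), the right-hand side at $S_b=(b^{\mathbb{N}})^{\mathrm{sat}}$ --- and reduce everything to the single equality $M^{\mathrm{sat}}=S_b$ via \cref{lem: Saturation} and \cref{lem: submonoid of open set}. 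This is conceptually clean, at the cost of having to verify that saturation computation, and that is where your writeup has a gap.

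From $cde=a^mb^n$ and $a^k=bf$ you derive $c^k(de)^k=b^{m+nk}f^m$ and assert that this exhibits $c^k\in S_b$. It does not by itself: membership in $S_b=(b^{\mathbb{N}})^{\mathrm{sat}}$ requires $c^k$ to divide a \emph{pure} power of $b$, and $b^{m+nk}f^m$ is not one (compare: $y$ divides $xy$ in $K[x,y]$, yet $y\notin S_x$). The repair is one line: $a^k=bf$ gives $D(a)=D(a^k)=D(b)\cap D(f)$ by \cref{lem: principal open sets are basis for Zariski topology of Spec}, so $D(f)\supseteq D(a)\supseteq D(b)$ and hence $f\in S_b$; then $b^{m+nk}f^m$ lies in the monoid $S_b$, and saturation of $S_b$ applied to $c^k(de)^k=b^{m+nk}f^m$ yields $c^k\in S_b$ and thus $c\in S_b$. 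Alternatively, you can skip the power-chasing entirely: if $\p$ is a prime containing $c$, then $sb^n=cd\in\p$ forces $s\in\p$ or $b\in\p$, and since $s\in S_a\subseteq S_b$ (because $D(a)\supseteq D(b)$), the case $s\in\p$ also forces $b\in\p$; therefore $b\in\rad(cA)$ by \cref{thm: radical of ideals}, i.e.\ $c\in S_b$. With either repair the argument is complete, and your closing remark correctly identifies the resulting isomorphism with the map induced by the universal property, matching the one the paper constructs.
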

\begin{proof} First notice that $D(a)\supseteq D(b)$ implies that $S_a\subseteq S_b$ by definition.
Hence the elements in $S_a$ have invertible image by the localization morphism $A\to S_b^{-1}A$ and therefore by the universal property of localization we have a unique morphism $S_a^{-1}A \to S_b^{-1}A$ which restrict to the localization on~$A$. 

Observe that the equality  
\begin{equation}\label{eq: equation in localization of localization}
    S_{b/1_A}=\left\{\frac{c}{d} \Bigm| c\in S_b \text{ and } d\in S_a\right\}
\end{equation}
of subsets of~$S_a^{-1}A$ holds.
That an element in the right-hand side is in the left-hand side is clear from the explicit description in \cref{lem: submonoid of open set}.
To see the reverse, note that if an element $c/d\in S_a^{-1}A$ belongs to $S_{b/1_A}$ then it verifies
\[D\left(\frac{c}{d}\right)\supseteq D\left(\frac{b}{1_A}\right)\]
by definition, and so $D(c)\supseteq D(b)$ because of \eqref{eq: image of principal open by localization}.
In particular,~$c\in S_b$. 

Now, the fact that every element $c/d\in S_{b/1_A}\subseteq S^{-1}_{a}A$ has invertible image in $S_b^{-1}A$ thanks to \eqref{eq: equation in localization of localization} implies again by the universal property of localization that there is a morphism
\[
S_{b/1_A}^{-1}(S_a^{-1}A) \to S_b^{-1}A.
\]

     The inverse morphism is given by the universal property from the composition of localization maps 
\[A \to S^{-1}_{a} A \to S^{-1}_{b/1_A} \left(S^{-1}_{a} A\right)\]
since an element $c\in A$ has invertible image if and only if $c/1_A\in S_{b/1_A}$, so if and only if $c\in S_b$.

\end{proof}
\begin{sublemma}\label{subl: injectivity principal}
    Given $a\in A$ and  $a_i \in A$ for $i\in I$ such that $\bigcup_{i\in I} D(a_i) = D(a)$, the morphism 
    \[S^{-1}_{a} A \longrightarrow \prod_{i\in I} S^{-1}_{a_i} A\]
    is injective.     
\end{sublemma}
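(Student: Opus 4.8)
The plan is to deduce this from the global case, \cref{subl: injectivity global}, by applying that statement inside the semiring $B = S_a^{-1}A$ rather than inside $A$ itself.

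First I would observe that the hypothesis $\bigcup_{i\in I} D(a_i) = D(a)$ forces $D(a_i)\subseteq D(a)$ for every $i\in I$, so that \cref{subl: localization of localization} applies to each pair $(a,a_i)$. It yields isomorphisms
\[
S^{-1}_{a_i/1_A}\!\left(S^{-1}_a A\right)\;\cong\;S^{-1}_{a_i}A
\]
which, by construction, identify the restriction morphism $S_a^{-1}A\to S_{a_i}^{-1}A$ with the localization map $S_a^{-1}A\to S^{-1}_{a_i/1_A}(S_a^{-1}A)$: both are the unique morphism obtained from the universal property of localization at $S_{a_i/1_A}$ over $S_a^{-1}A$.

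Next I would transport the given covering to $\Spec B = \Spec S_a^{-1}A$. By \cref{lem: principal open are affine} the map $\Spec(\varphi_a)$ is a homeomorphism of $\Spec S_a^{-1}A$ onto $D(a)$, and by \eqref{eq: image of principal open by localization} it carries the principal open set $D(a_i/1_A)$ of $\Spec B$ onto $D(a_i)\cap D(a)=D(a_i)$. Since the sets $D(a_i)$ cover $D(a)$, the principal open sets $D(a_i/1_A)$ cover $\Spec B$. Applying \cref{subl: injectivity global} to the semiring $B$ and the family $(a_i/1_A)_{i\in I}$ then shows that
\[
B\longrightarrow\prod_{i\in I}S^{-1}_{a_i/1_A}B
\]
is injective. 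Post-composing with the product of the isomorphisms from the first step gives exactly the morphism $S_a^{-1}A\to\prod_{i\in I}S_{a_i}^{-1}A$ of the statement, which is therefore injective.

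The only delicate point is the compatibility claim in the first step — that the isomorphism of \cref{subl: localization of localization} really does intertwine the restriction map with the double-localization map. This is essentially forced by the way that sublemma was proved, since all the relevant morphisms factor through universal properties of localization out of $S_a^{-1}A$, so I do not expect genuine difficulty there; the remainder is routine bookkeeping with the homeomorphism of \cref{lem: principal open are affine}.
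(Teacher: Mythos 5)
Your proof is correct and follows essentially the same route as the paper's: both reduce to \cref{subl: injectivity global} by identifying $D(a)$ with $\Spec S_a^{-1}A$ via \cref{lem: principal open are affine} and invoking \cref{subl: localization of localization} to match the localizations. The compatibility point you flag is indeed harmless, since both maps out of $S_a^{-1}A$ agree after precomposition with $A\to S_a^{-1}A$ and are therefore equal by the universal property of that localization.
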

\begin{proof}
As in \cref{lem: principal open are affine} there is a homeomorphism $D(a)\cong\Spec S^{-1}_{a} A $ which identifies $D(a_i)$ with~$D(a_i/1_A)$.
Then, by hypothesis
\[\bigcup_{i\in I} D\left(\frac{a_i}{1_A} \right) = \Spec S^{-1}_{a} A\]
where $a_i/1_A\in S^{-1}_{a} A$.
The claim follows from \cref{subl: injectivity global} applied to the semiring $S^{-1}_{a} A$ and \cref{subl: localization of localization}.
\end{proof}

Coming back to the assumption~$D(a)=X$, we now focus on proving the surjectivity in \cref{lem: SheafLemma}.
We will make use of the following technical result, that allows to represent elements which agree in different localizations through a more desirable form.

\begin{sublemma}\label{PreviousLemma}
    Given $a_1,\ldots,a_n\in A$ and a collection $x_i/s_i\in S^{-1}_{a_i} A$ satisfying $x_i/s_i=x_j/s_j$ in $S^{-1}_{a_i a_j} A$, there exist $x_i'\in A$ and $s'_i \in S_{a_i}$ such that \[\frac{x_i'}{s_i'}=\frac{x_i}{s_i}  \in S^{-1}_{a_i} A\] for every $i=1,\ldots,n$, and $x_i's_j'=x_j's_i'$ for all $i$ and~$j$.
\end{sublemma}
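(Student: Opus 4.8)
The statement is the classical "clearing denominators" lemma, and I would prove it by the usual two-step bootstrap: first replace each representative so that all numerators are polynomials in the relevant $a_i$ (i.e.\ push $s_i$ into a power of $a_i$), and then use the cocycle hypotheses to adjust numerators so that the relations $x_i's_j'=x_j's_i'$ hold exactly in $A$, not merely after further localization.

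\textbf{Step 1: normalizing the denominators.} Fix $i$. By \cref{lem: submonoid of open set}, $s_i\in S_{a_i}=(a_i^{\mathbb N})^{\mathrm{sat}}$, so there are $c_i\in A$ and $k_i\in\mathbb N$ with $s_ic_i=a_i^{k_i}$. Then in $S_{a_i}^{-1}A$ we have
\[
\frac{x_i}{s_i}=\frac{x_ic_i}{s_ic_i}=\frac{x_ic_i}{a_i^{k_i}}.
\]
So, up to replacing $x_i$ by $x_ic_i$ and $s_i$ by $a_i^{k_i}$, and then (by raising all exponents to a common $N=\max_i k_i$, multiplying numerator and denominator by the appropriate power of $a_i$) arranging a uniform exponent, we may assume $s_i=a_i^{N}$ for all $i$, with new numerators which I continue to call $x_i$. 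The hypothesis $x_i/s_i=x_j/s_j$ in $S_{a_ia_j}^{-1}A$ is preserved.

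\textbf{Step 2: clearing the denominators in the equalities.} With $s_i=a_i^N$, the equality $x_i/a_i^N=x_j/a_j^N$ in $S_{a_ia_j}^{-1}A$ means there is $u_{ij}\in S_{a_ia_j}$ with $u_{ij}(x_ia_j^N)=u_{ij}(x_ja_i^N)$. Again by \cref{lem: submonoid of open set}, $u_{ij}$ divides some power of $a_ia_j$, so after multiplying through we may take $u_{ij}=(a_ia_j)^{M_{ij}}$; replacing $M_{ij}$ by a common $M$ (finitely many pairs), we get, for all $i,j$,
\[
a_i^{M}a_j^{M}\,x_i\,a_j^{N}=a_i^{M}a_j^{M}\,x_j\,a_i^{N},
\qquad\text{i.e.}\qquad
a_i^{M}\,(a_j^{M+N}x_i)=a_j^{N}\,(a_i^{M}a_j^{M}x_j)
\]
— the cleanest way is to note this says $(a_i^{M+N})(a_j^{M+N}x_i) = (a_j^{M+N})(a_i^{M+N}x_j)$ after a harmless extra multiplication by $a_i^{N}a_j^{N}$. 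Now set $s_i'=a_i^{M+N}$ and $x_i'=a_i^{M}x_i$ (choosing exponents so that the bookkeeping balances; I would do the precise exponent arithmetic here, raising everything to one large common exponent $L$ so that $s_i'=a_i^{L}$, $x_i'=a_i^{L-N}x_i$). Then $x_i'/s_i'=x_i/a_i^N=x_i/s_i$ in $S_{a_i}^{-1}A$ because $a_i^{L-N}\in S_{a_i}$, and the cocycle relations above become exactly $x_i's_j'=x_j's_i'$ in $A$.

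\textbf{Main obstacle.} There is no deep difficulty here; the only thing requiring care is the exponent bookkeeping — making sure a \emph{single} pair of exponents works simultaneously for all $i$ and all $(i,j)$, which is fine because $I$ (really $\{1,\dots,n\}$) is finite. The one genuine input is \cref{lem: submonoid of open set}'s identification $S_a=(a^{\mathbb N})^{\mathrm{sat}}$, which is what lets me replace arbitrary denominators in $S_{a_i}$ and arbitrary "clearing" multipliers in $S_{a_ia_j}$ by honest powers of $a_i$ and $a_ia_j$; without it one would be stuck with denominators one cannot compare across different indices. I would also remark that the construction is symmetric in $i$ and $j$, so $x_i's_j'=x_j's_i'$ indeed holds for \emph{all} ordered pairs, including $i=j$ trivially.
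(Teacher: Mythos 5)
Your proof is correct and follows essentially the same route as the paper's: both arguments use \cref{lem: submonoid of open set} to replace the denominators $s_i$ and the comparison elements $u_{ij}$ by (divisors of) powers of $a_i$ and $a_ia_j$ respectively, and then take one large common exponent so that the relations $x_i's_j'=x_j's_i'$ hold on the nose in $A$ (the paper skips your Step 1 and keeps non-uniform exponents $n_i$, which is only a cosmetic difference). One small caveat: the parenthetical identity $(a_i^{M+N})(a_j^{M+N}x_i)=(a_j^{M+N})(a_i^{M+N}x_j)$ is not the relation you need and does not follow from what precedes it, but this detour is unnecessary anyway, since the equality $a_i^{M}a_j^{M+N}x_i=a_i^{M+N}a_j^{M}x_j$ you already derived is exactly $x_i's_j'=x_j's_i'$ for $x_i'=a_i^{M}x_i$ and $s_i'=a_i^{M+N}$.
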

\begin{proof}
By hypothesis, for all pair of indices $i,j$ there exists $u_{ij}\in S_{a_i a_j}$ such that
\begin{equation}\label{eq: equality in previous lemma}
    x_is_j u_{ij} =x_js_i u_{ij}.
\end{equation}
Because of \cref{lem: submonoid of open set}, applied to both $s_i\in S_{a_i}$ and~$u_{ij}\in S_{a_i a_j}$, there exist $v_i,v_{ij}\in A$ and $n_i,n_{ij}\in\mathbb{N}$ such that $s_iv_i=a_i^{n_i}$ and $u_{ij}v_{ij}=(a_{i}a_{j})^{n_{ij}}$.
Therefore, multiplying both sides of \eqref{eq: equality in previous lemma} by $v_iv_jv_{ij}$ we obtain that
\[(x_iv_i) a_j^{n_j} (a_{i}a_{j})^{n_{ij}} =(x_jv_j) a_i^{n_i} (a_{i}a_{j})^{n_{ij}}.\] 
Taking $N= \max_{i,j}n_{ij}$, which is finite since the set of index is finite, and multiplying by a suitable power of $a_ia_j$ both sides of the last equality, we have that
\[(x_iv_ia_i^N) a_j^{N+n_j} = (x_iv_i) a_j^{n_j} (a_{i}a_{j})^{N} =(x_jv_j) a_i^{n_i} (a_{i}a_{j})^{N}=(x_jv_ja_j^N) a_i^{N+n_i}\]
for all~$i,j$.
The claim follows defining $x_i'=x_iv_ia_i^N$ and~$s_i'=a_i^{N+n_i}$. 
\end{proof}

\begin{sublemma}\label{subl: surjectivity global}
    Given $a_i \in A$ for $i\in I$ such that $\bigcup_{i\in I} D(a_i) =  \Spec A$, the morphism
    \[A \longrightarrow \operatorname{Eq}\bigg(\prod_{i\in I} S^{-1}_{a_i} A \rightrightarrows \prod _{i,j\in I}  S^{-1}_{a_ia_j} A\bigg)\]
    is surjective.     
\end{sublemma}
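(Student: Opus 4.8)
The plan is to follow the classical partition-of-unity argument for $\Spec$ of a ring, reducing first to a finite cover and then propagating the result back to the whole (possibly infinite) index set by means of the injectivity already established in \cref{subl: injectivity principal}.

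First I would reduce to a finite cover. Since $\bigcup_{i\in I} D(a_i)=\Spec A$, \cref{lem: principal open sets are basis for Zariski topology of Spec} provides a finite subset $T\subseteq I$ with $\bigcup_{i\in T} D(a_i)=\Spec A$. Suppose I can produce $a\in A$ whose image in $S^{-1}_{a_i}A$ equals $x_i/s_i$ for all $i\in T$; I then claim the same holds for every $j\in I$. Indeed, for $i\in T$ and $j\in I$ the equalizer condition gives $x_i/s_i=x_j/s_j$ in $S^{-1}_{a_ia_j}A$, so $a/1_A=x_j/s_j$ there. As $\bigcup_{i\in T}D(a_i)=\Spec A\supseteq D(a_j)$, the opens $D(a_ia_j)=D(a_i)\cap D(a_j)$ with $i\in T$ cover $D(a_j)$; identifying $D(a_j)$ with $\Spec S^{-1}_{a_j}A$ (\cref{lem: principal open are affine}) and using \cref{subl: localization of localization} to identify $S^{-1}_{(a_ia_j)/1_A}(S^{-1}_{a_j}A)\cong S^{-1}_{a_ia_j}A$, the injectivity of \cref{subl: injectivity global} applied over $S^{-1}_{a_j}A$ forces $a/1_A=x_j/s_j$ in $S^{-1}_{a_j}A$. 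Hence it suffices to treat the case $I=T$ finite.

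Second, for $I=\{1,\dots,n\}$ finite I would invoke \cref{PreviousLemma} to replace the data by representatives $x_i'\in A$ and $s_i'\in S_{a_i}$ with $x_i'/s_i'=x_i/s_i$ in $S^{-1}_{a_i}A$ and $x_i's_j'=x_j's_i'$ for all $i,j$. By \cref{lem: submonoid of open set} one has $D(s_i')=D(a_i)$, so $\bigcup_i D(s_i')=\Spec A$ and therefore, by \cref{lem: principal open sets are basis for Zariski topology of Spec}, the $s_i'$ generate the unit ideal: there exist $b_i\in A$ with $\sum_j b_j s_j'=1_A$. Setting $a=\sum_j b_j x_j'$, the cross-multiplication relations give
\[a s_i'=\sum_{j} b_j x_j' s_i'=\sum_{j} b_j x_i' s_j'=x_i'\sum_{j} b_j s_j'=x_i',\]
so $a/1_A=x_i'/s_i'=x_i/s_i$ in $S^{-1}_{a_i}A$ for every $i$. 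This yields the desired preimage and closes the finite case.

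The main obstacle is not a single computation but the bookkeeping in the reduction step: one must be sure that the element $a$ coming from a finite subcover actually restricts correctly on \emph{every} $D(a_j)$ of the original cover, and this is exactly where the previously proven injectivity (\cref{subl: injectivity principal}, or equivalently \cref{subl: injectivity global} over the localized semiring) together with the localization-of-localization identification of \cref{subl: localization of localization} is indispensable. The finite case itself is the familiar partition-of-unity manipulation, with \cref{PreviousLemma} doing the work of clearing denominators consistently.
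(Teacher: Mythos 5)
Your proof is correct and follows essentially the same route as the paper's: pass to a finite subcover, invoke the representative lemma (\cref{PreviousLemma}) to get compatible numerators $x_i'$ and denominators $s_i'$, build the partition-of-unity element $a=\sum b_jx_j'$, and then propagate the identity $a/1_A=x_i/s_i$ to the remaining indices $i\in I$ using the injectivity of \cref{subl: injectivity principal} over $D(a_i)$ (which is what your re-derivation via \cref{subl: localization of localization} amounts to). One small slip: from $s_i'\in S_{a_i}$ and \cref{lem: submonoid of open set} you only get $D(s_i')\supseteq D(a_i)$, not equality in general (take $s_i'=1_A$); but the containment already gives $\bigcup_iD(s_i')=\Spec A$, so the argument goes through unchanged.
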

\begin{proof}
    Suppose  we have an element in the equalizer, that is a collection of elements $x_i/s_i\in S^{-1}_{a_i} A$ satisfying $x_i/s_i=x_j/s_j$ in $S^{-1}_{a_i a_j} A$ for all pair of indices $i,j\in I$.
    We want to find an element $a\in A$ such that the equality $a/1_A=x_i/s_i$ holds in $S^{-1}_{a_i} A$ for all~$i\in I$. 
    
    As the open sets $D(a_i)$ cover $\Spec A$, by \cref{lem: principal open sets are basis for Zariski topology of Spec} we can choose a finite set $J\subseteq I$ such that $\bigcup_{j\in J} D(a_j)=\Spec A$. 
    
    Now, as $J$ is finite we can use \cref{PreviousLemma} to deduce the existence of $x_j'\in A$ and $s'_j\in S_{a_j}$ for every $j\in J$ such that \[\frac{x_j'}{s_j'}=\frac{x_j}{s_j}\in S^{-1}_{a_j} A\] and satisfying $x_j's_k'=x_k's_j'$ for all~$j,k\in J$.

    By definition, the fact that $s'_j\in S_{a_j}$ implies that $D(s_j^\prime)\supseteq D(a_j)$ and so \[\bigcup_{j\in J} D(s_j')=\Spec A\] since the family of $D(a_j)$ with $j\in J$ already covers~$\Spec A$. 
    In particular, because of \cref{lem: principal open sets are basis for Zariski topology of Spec} there exists $b_j\in A$ with $\sum_{j\in J} b_j s_j'=1_A$.

    We claim that the element $a= \sum_{j\in J} b_jx_j' \in A$ is the one we are seeking.
    Indeed, for all $j\in J$ we have 
    \[a s_j'= \sum_{k\in J} b_kx_k's_j' = \sum_{k\in J} b_kx_j's_k^\prime= x_j',\] and therefore 
    \begin{equation}\label{eq: equality in surjectivity global}
    \frac{a}{1_A}= \frac{x_j'}{s_j'}=\frac{x_j}{s_j}\in S^{-1}_{a_j} A.
    \end{equation}
    Instead, if $i\in I\setminus J$ consider the covering of $D(a_i)$ given by 
    \[D(a_i)= \bigcup_{j\in J} (D(a_i)\cap D(a_j))=\bigcup_{j\in J} D(a_ia_j),\]
    which holds since the family of $D(a_j)$ with $j\in J$ already covers~$\Spec A$.
    In order to see that $a/1_A=x_i/s_i$ in $S^{-1}_{a_i} A$ we can use \cref{subl: injectivity principal} to reduce it to the proof of $a/1_A=x_i/s_i$ in $S^{-1}_{a_ia_j} A$ for all~$j\in J$.
    But we have
    \[\frac{x_i}{s_i}=\frac{x_j}{s_j}=\frac{a}{1_A}\in S^{-1}_{a_ia_j} A\]
    because of \eqref{eq: equality in surjectivity global}. 
    \end{proof}

We are finally ready to conclude the proof of our main claim.

\begin{proof}[Proof of \cref{lem: SheafLemma}]
Injectivity follows immediately from \cref{subl: injectivity principal}.
Regarding surjectivity, we already know the result for the whole space $\Spec A$ by \cref{subl: surjectivity global}, and one deduces the general case from this one by identifying $D(a)$ with $\Spec S_a^{-1}A$ and using \cref{subl: localization of localization}, as done in the proof of \cref{subl: injectivity principal}.
\end{proof}

\subsection*{Approach C: Sheaf of sections}

Our last approach to the construction of the structure sheaf on $X$ readily follows Hartshorne's book \cite[section~II.2]{Hartshorne}, and it has the advantage that it directly defines the semiring of its local sections on any open set without depending on any choice. 

Let $U \subseteq X$ be an open subset.
We define $\calO_X(U)$ to be the set of functions
\[
s\colon U \longrightarrow \coprod_{\p \in U} A_{\p}
\]
such that $s(\mathfrak{p})\in A_\mathfrak{p}$ for each $\mathfrak{p} \in U$ and such that $s$ is a local quotient of elements of~$A$.
More precisely, the latter means that for each $\mathfrak{p} \in U$ there is a neighbourhood $V$ of~$\p$, contained in~$U$, and elements $a,f \in A$, such that for each $\mathfrak{q} \in V$ it happens that $f \notin \mathfrak{q}$ and $s(\mathfrak{q})=a/f$ in~$A_\mathfrak{q}$.

Sums and products of such functions are again such, and the function mapping each $\p$ to $1_{A_\p}$ is an identity.
Thus $\calO_X(U)$ is a commutative semiring with identity.
If $V \subseteq U$ are two open sets, the natural restriction map $\calO_X(U) \to \calO_X(V)$ is a homomorphism of semirings.
One shows from the local nature of the definition that $\calO_X$ is a sheaf, with stalk $A_\mathfrak{p}$ at each~$\mathfrak{p}$.

\subsection{Equivalence of definitions}

We show here that the three approaches presented in the previous subsection for the construction of the structure sheaf on $X$ are actually equivalent.

\begin{theorem}\label{thm: equivalence of definitions}
    The three definitions of $\calO_X$ from \cref{subsec: three constructions of the structure sheaf on Spec} produce the same sheaf on~$X$, up to isomorphism.
    In particular:
    \begin{enumerate}
        \item For any element~$a\in A$, the natural morphism $A_a\to \calO_X(D(a))$ is an isomorphism.
        \item The semiring of global sections of $\mathcal{O}_X$ is~$\calO_X(X) \cong A$.
        \item For any~$\mathfrak{p} \in X$, the stalk $\calO_{X,\p}$ of the sheaf $\calO_X$ at $\mathfrak{p}$ is isomorphic to the local semiring~$A_\p$.
    \end{enumerate}
\end{theorem}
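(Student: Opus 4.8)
The plan is to show that Approaches A, B, and C all produce isomorphic sheaves by comparing each of them on the basis of principal open sets, where everything is computable. Since a sheaf on a space is determined up to canonical isomorphism by its restriction to a basis (together with the sheaf property), it suffices to produce compatible isomorphisms on the sets $D(a)$ and then invoke the gluing formalism of \cite[Section~6.30]{stacks-project}. The key computation, already done in \cref{lem: SheafLemma}, is that the presheaf $\mathcal{F}$ of \eqref{eq: presheaf in Approach A} satisfies the sheaf axiom on the basis of principal opens.

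First I would treat Approach~B as the reference construction: by \cref{lem: SheafLemma} the presheaf $\mathcal{F}$ is a sheaf on the basis of principal open sets, so by \cite[Lemma~6.30.6]{stacks-project} it extends uniquely to a sheaf $\mathcal{O}_X$ on $X$ with $\mathcal{O}_X(D(a)) = S_a^{-1}A \cong A_a$ for all $a \in A$; this immediately gives item~(1), and item~(2) follows by taking $a = 1_A$ since $S_{1_A} = A^{\times}$ and $\mathcal{O}_X(X) = S_{1_A}^{-1}A$, which is canonically $A$ because inverting units changes nothing. For the stalk computation in item~(3), I would use that the stalk of $\mathcal{O}_X$ at $\p$ is the colimit of $\mathcal{O}_X(D(a))$ over $a \notin \p$, i.e.\ the colimit of the $A_a$ over the directed system $\{a \mid a \notin \p\}$ ordered by divisibility-up-to-saturation; one checks this colimit is exactly $A_\p = (A \setminus \p)^{-1}A$ by the universal property of localization, since $A \setminus \p = \bigcup_{a \notin \p} S_a$ up to saturation (using \cref{lem: submonoid of open set} and the fact that $\p$ is prime, so its complement is saturated).

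Next I would identify Approach~A with Approach~B: the sheafification $\mathcal{F}^{\#}$ of the presheaf $\mathcal{F}$ is, by the universal property of sheafification, the initial sheaf receiving a map from $\mathcal{F}$; but the extension in Approach~B is also a sheaf receiving a map from $\mathcal{F}$ which is an isomorphism on the basis, hence on stalks, hence the induced map $\mathcal{F}^{\#} \to \mathcal{O}_X$ is an isomorphism of sheaves. (Concretely: two sheaves agreeing on a basis, compatibly, are isomorphic.) Finally, for Approach~C I would argue exactly as in \cite[Proposition~II.2.2]{Hartshorne}: the sheaf of local-quotient sections $\mathcal{O}_X^{\mathrm{C}}$ has stalk $A_\p$ at each $\p$ by construction, and there is a natural morphism $S_a^{-1}A \to \mathcal{O}_X^{\mathrm{C}}(D(a))$ sending $x/s$ to the section $\q \mapsto x/s \in A_\q$; one shows this is injective (if $x/s$ restricts to $0$ in every $A_\q$ with $\q \in D(a)$, a covering argument together with \cref{lem: SheafLemma}-type compactness forces $x/s = 0$ in $S_a^{-1}A$) and surjective (a local-quotient section is covered by finitely many $D(a_i) \subseteq D(a)$ on which it is given by honest fractions, which glue by the sheaf property). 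Since both $\mathcal{O}_X$ and $\mathcal{O}_X^{\mathrm{C}}$ are sheaves agreeing on the basis of principal opens, they are isomorphic.

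The main obstacle is making the stalk and surjectivity arguments genuinely work over semirings rather than rings. Over rings one freely uses subtraction when manipulating equalities of fractions and when gluing local quotients; here one must be careful that every such manipulation only uses additive and multiplicative identities, not cancellation — but this has essentially already been arranged, since the delicate surjectivity step is precisely \cref{subl: surjectivity global}, whose proof was written to avoid subtraction. So the bulk of the work is bookkeeping: checking that the sheafification comparison and the Hartshorne-style argument go through verbatim once \cref{lem: SheafLemma} is available, and verifying that the stalk colimit is computed by localization at the saturated complement $A \setminus \p$. I expect no genuinely new difficulty beyond \cref{lem: SheafLemma} itself, which has already been proved.
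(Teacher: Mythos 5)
Your proposal is correct and follows essentially the same route as the paper: both use \cref{lem: SheafLemma} as the central input, identify Approaches A and B via the universal property of sheafification and the extension-on-a-basis formalism, and then compare with Approach C. The only variation is in the B--C comparison: you argue Hartshorne-style by directly showing injectivity and surjectivity of $S_a^{-1}A \to \mathcal{O}_X^{\mathrm{C}}(D(a))$ on each principal open (via an annihilator/covering argument and gluing), whereas the paper defines the presheaf morphism $\mathcal{F}\to\mathcal{O}_X^{\mathrm{C}}$ once and for all, checks that it is an isomorphism on stalks, and invokes the fact that a stalkwise isomorphism of sheaves is an isomorphism — this is slightly more economical since it avoids the direct surjectivity/injectivity check, but the two arguments are morally the same and both are valid over semirings since \cref{lem: SheafLemma} has been proved without recourse to subtraction. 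Your stalk computation for item (3), via $A\setminus\p=\bigcup_{a\notin\p} S_a$ and passing to the filtered colimit of the $S_a^{-1}A$, is also fine and agrees with the isomorphism $\mathcal{F}_\p\cong A_\p$ the paper verifies directly.
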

\begin{proof}
    Let $\mathcal{F}$ be the presheaf of semirings on $X$ defined by
    \[
    \mathcal{F}(U)=S_U^{-1}A
    \]
    for all open subset~$U\subseteq X$, as in the first approach in \cref{subsec: three constructions of the structure sheaf on Spec}.
    The restriction of $\mathcal{F}$ to the basis of principal open sets of~$X$ has the same stalks as~$\mathcal{F}$ and, as we proved in \cref{lem: SheafLemma}, is a sheaf on such a basis.
    A comparison between the definition of the sheafification from \cite[Section~6.17]{stacks-project} and the proof of \cite[Lemma~6.30.6]{stacks-project} shows then that the first two approaches in \cref{subsec: three constructions of the structure sheaf on Spec} produce the same sheaf on~$X$.

    It remains to compare $\mathcal{F}^\#$ with the sheaf $\mathcal{O}_X$ defined through the third approach.
    To do so, consider the morphism of presheaves $\mathcal{F}\to\mathcal{O}_X$ defined on all open subset~$U\subseteq X$ by
    \begin{equation}\label{eq: morphism of presheaves}
        \mathcal{F}(U)\longrightarrow\mathcal{O}_X(U), \quad a/f\longmapsto(\mathfrak{p}\mapsto a/f)
    \end{equation}
    for all~$a/f\in S_U^{-1}A$.
    Since $f\in S_U$ it follows that $f\notin\mathfrak{p}$ for all $\mathfrak{p}\in U$ and so $a/f$ can be seen as an element in~$A_\mathfrak{p}$.
    It is easily seen that the morphism is well-defined.
    For all $\mathfrak{p}\in X$ the induced morphism between stalks is
    \[
    \mathcal{F}_\mathfrak{p}\longrightarrow\mathcal{O}_{X,\mathfrak{p}}, \quad [a/f]\longmapsto a/f
    \]
    where $a/f\in\mathcal{F}(U)$ for some open set $U$ containing~$\mathfrak{p}$.
    Notice that this is in fact an isomorphism between $\mathcal{F}_\p$ and~$\mathcal{O}_{X,\mathfrak{p}}=A_\p$.
    Indeed, for any~$a/f\in A_\p$, with~$f\notin\mathfrak{p}$, we can take~$U=D(f)\ni \mathfrak{p}$; the germ of the element $a/f\in \mathcal{F}(D(f))$ is a preimage of~$a/f$, thus proving surjectivity.
    For the injectivity, if~$a/f=b/g  \in A_\p$, then there exists $u\in A\setminus \p$ such that~$agu=bfu$; therefore $a/f=b/g$ in~$\mathcal{F}(D(fgu))$, showing that~$[a/f]=[b/g]$. 
        
    Since the morphism defined in \eqref{eq: morphism of presheaves} induces an isomorphism at all stalks, its sheafification realizes the desired isomorphism between $\mathcal{F}^\#$ and~$\mathcal{O}_X$.

    Now, claim (1) follows from the definition of the structure sheaf via the second approach and from \cref{lem: submonoid of open set}.
    Claim (2) is a special case of (1), obtained by taking~$a=1_A$.
    Claim (3) has been shown directly.
\end{proof}

\begin{example}\label{ex: sheaf on Spec N}
    Consider the semiring of natural numbers~$\mathbb{N}$ and denote by $\mathfrak{m}=\mathbb{N}\setminus\{1\}$ its unique maximal ideal.
    It follows from the explict description in \cref{ex: Sp and Spec of N} that a nonempty open subset of $\Spec \mathbb{N}$ is either of the form $D(N)$ for some nonzero natural number~$N$, or it agrees with~$(\Spec\mathbb{N})\setminus\{\mathfrak{m}\}$.
    We get from \cref{thm: equivalence of definitions} that the local sections of the structure sheaf are
    \[
    \calO_{\Spec \mathbb{N}}(D(N))=\mathbb{N}[1/N]
    \]
    on the first type of open.
    For the second, writing $(\Spec\mathbb{N})\setminus \{\mathfrak{m}\}=D(2)\cup D(3)$ we deduce from \eqref{eq: sheaf from Approach B} that
    \[
    \calO_{\Spec \mathbb{N}}\left((\Spec\mathbb{N})\setminus \{\mathfrak{m}\}\right)=\mathbb{N}.
    \]
    
    Stalks are also immediately computed from \cref{thm: equivalence of definitions} and in particular we have that
    \[
    \calO_{\Spec \mathbb{N},\{0\}}\cong\mathbb{Q}_{\geq0}\quad\text{and}\quad\calO_{\Spec \mathbb{N},\mathfrak{m}}\cong\mathbb{N}.
    \]
\end{example}


\subsection{The category of affine semischemes}

Recall that in classical algebraic geometry the functor $\Spec$ induces an anti-equivalence between the category of commutative rings and the category of affine schemes. We extend this correspondence to the setting of commutative semirings by replacing rings with semirings and locally ringed spaces with the following natural analogue.

\begin{definition}
A \emph{locally  semiringed space} is a pair $(\mathfrak{X}, \mathcal{O}_\mathfrak{X})$ consisting of a topological space $\mathfrak{X}$ and a sheaf of commutative semirings $\mathcal{O}_\mathfrak{X}$ on $\mathfrak{X}$ such that for all $x \in \mathfrak{X}$ the stalk $\calO_{\mathfrak{X},x}$ is a local semiring.
A \emph{morphism of locally semiringed spaces}
\[
(f, f^\#)\colon (\mathfrak{X},\mathcal{O}_\mathfrak{X})\longrightarrow (\mathfrak{Y},\mathcal{O}_\mathfrak{Y})
\]
is a continuous map $f\colon \mathfrak{X}\to \mathfrak{Y}$ together with a morphism of sheaves of semirings
\[
f^\#\colon \mathcal{O}_\mathfrak{Y} \longrightarrow f_*\mathcal{O}_\mathfrak{X}
\]
such that for all $x\in \mathfrak{X}$ the induced homomorphism on stalks
\[
f^\#_x\colon (f^{-1}\mathcal{O}_\mathfrak{Y})_x = \mathcal{O}_{\mathfrak{Y},f(x)} \longrightarrow
\mathcal{O}_{\mathfrak{X},x}
\]
is a local homomorphism of semirings, that is it sends the maximal ideal of
$\mathcal{O}_{\mathfrak{Y},f(x)}$ into the maximal ideal of~$\mathcal{O}_{\mathfrak{X},x}$. 

A (locally) semiringed space $(\mathfrak{X},\mathcal{O}_\mathfrak{X})$ is called an \emph{affine semischeme} if it is isomorphic to one of the form $(\Spec A, \mathcal{O}_{\Spec A})$ for some commutative semiring~$A$.  
\end{definition}

The following is a categorical version of \cite[Proposition~II.2.3]{Hartshorne} for the case of affine locally semiringed spaces.

\begin{proposition}\label{prop: category Spec is equivalent}
The contravariant functor
\[
\Spec \colon A\longmapsto (\Spec A, \mathcal{O}_{\Spec A})
\]
is an anti-equivalence between the category of commutative semirings and the category of affine semischemes. 
\end{proposition}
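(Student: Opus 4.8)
The plan is to mimic the classical proof of \cite[Proposition~II.2.3]{Hartshorne}, which establishes that $\Spec$ is fully faithful and essentially surjective onto affine semischemes. Essential surjectivity is immediate from the definition of affine semischeme, so the whole content lies in fully faithfulness: for two semirings $A$ and $B$, one must show that the natural map
\[
\Hom_{\mathrm{Semiring}}(A,B)\longrightarrow \Hom_{\mathrm{LRS}}\big((\Spec B,\mathcal{O}_{\Spec B}),(\Spec A,\mathcal{O}_{\Spec A})\big)
\]
is a bijection, where on the right we take morphisms of locally semiringed spaces. The forward map sends $f\colon A\to B$ to the pair $(\Spec(f),\Spec(f)^\#)$, where $\Spec(f)$ is the continuous map of \cref{lem: continuity of morphisms} and $\Spec(f)^\#$ is induced on sections of the structure sheaves by the compatible collection of localized maps $A_a\to B_{f(a)}$ (using that $\mathcal{O}_{\Spec A}(D(a))\cong A_a$ by \cref{thm: equivalence of definitions}); one checks it is a local homomorphism on stalks because $f^{-1}(\p)$ pulls back $\p B_\p$ into $f^{-1}(\p)A_{f^{-1}(\p)}$.

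For the inverse, given a morphism $(g,g^\#)$ of locally semiringed spaces, the key step is to recover an algebraic morphism $A\to B$. One takes global sections: $g^\#$ gives a semiring homomorphism $A\cong\mathcal{O}_{\Spec A}(\Spec A)\to \mathcal{O}_{\Spec B}(\Spec B)\cong B$, call it $f$, using \cref{thm: equivalence of definitions}.(2). The heart of the argument is then to verify that the morphism of locally semiringed spaces $(\Spec(f),\Spec(f)^\#)$ reconstructed from this $f$ coincides with the original $(g,g^\#)$. First I would check the maps of topological spaces agree: for $\q\in\Spec B$, using the local homomorphism condition on the stalk $g^\#_\q\colon \mathcal{O}_{\Spec A, g(\q)}=A_{g(\q)}\to \mathcal{O}_{\Spec B,\q}=B_\q$ and the compatibility of $g^\#$ with global sections, one shows $f^{-1}(\q)=g(\q)$; concretely, an element $a\in A$ lies in $g(\q)$ iff its germ lies in the maximal ideal of $B_\q$ iff $f(a)\in\q$, and the locality hypothesis is exactly what makes the middle equivalence work. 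Once $g=\Spec(f)$ as maps, the two sheaf morphisms $g^\#$ and $\Spec(f)^\#$ agree on the basis of principal opens $D(a)$ because both are determined by the induced map on stalks (which are localizations), or alternatively because a section of $\mathcal{O}_{\Spec A}$ over $D(a)$ is determined by its germs and $g^\#$, $\Spec(f)^\#$ induce the same map on each stalk by naturality of the global-sections identification; hence they agree as morphisms of sheaves.

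Finally one checks the two constructions are mutually inverse: starting from $f\colon A\to B$, passing to $(\Spec(f),\Spec(f)^\#)$ and taking global sections returns $f$ by \cref{thm: equivalence of definitions}.(2) and functoriality of the identification $\mathcal{O}_{\Spec A}(\Spec A)\cong A$. It then remains to note that the assignment $A\mapsto(\Spec A,\mathcal{O}_{\Spec A})$ is functorial and contravariant, which is built into the definition of $\Spec(f)$ and $\Spec(f)^\#$, and that composition is respected --- a routine diagram chase with localizations.

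\textbf{Main obstacle.} The delicate point --- and the one place where the semiring setting could in principle differ from the classical one --- is the verification that $g$ agrees with $\Spec(f)$ on points, i.e.\ that the locality condition on stalk maps forces $f^{-1}(\q)=g(\q)$. This requires knowing that in the local semiring $A_\p$ the set of non-units is exactly the maximal ideal $\p A_\p$ (so that membership in the maximal ideal is detected by the algebra), and that germ-at-$\p$ of a global section $a\in A$ lies in $\p A_\p$ iff $a\in\p$. Both follow from \cref{thm: equivalence of definitions}.(3) and the description of localizations in \cref{sec: basic definitions}, but one should check carefully that no pathology of semiring localization (e.g.\ the subtleties around injectivity of $\varphi_S$ noted after \cref{lem: Saturation}) interferes. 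I expect this to go through verbatim as in Hartshorne, since $\Spec$ and its structure sheaf have been engineered in \cref{sec: structure sheaf for Spec} precisely to make the stalks honest local semirings with the expected maximal ideals.
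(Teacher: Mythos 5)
Your proposal is correct and follows essentially the same route as the paper: full faithfulness via the global-sections functor as quasi-inverse, recovery of the continuous map from the locality condition on stalk homomorphisms $A_{g(\q)}\to B_\q$, and agreement of the sheaf morphisms checked on the basis of principal opens using \cref{thm: equivalence of definitions}. The ``main obstacle'' you isolate (that membership in $g(\q)$ is detected by the maximal ideal of the stalk) is exactly the point the paper handles with its commutative diagram of stalks.
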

\begin{proof}
The proof works as in the classical case of rings, by showing that the functor $\Spec$ is fully faithful and essentially surjective, with quasi-inverse the global section functor.

First, for any pair of semirings $A,B$ the functor $\Spec$ gives a map
\begin{equation}\label{eq: functor Spec on morphism}
\Spec\colon\Hom(A,B) \longrightarrow \Hom((\Spec B, \mathcal{O}_{\Spec B}),(\Spec A, \mathcal{O}_{\Spec A}))
\end{equation}
obtained by sending $\varphi$ to the pair $(\Spec(\varphi),\Spec(\varphi)^\#)$ where $\Spec(\varphi)$ is the continuous map from \cref{lem: continuity of morphisms}, while $\Spec(\varphi)^\#$ is defined by globalizing the localizations of~$\varphi$ through the third approach to the structure sheaf proposed in \cref{subsec: three constructions of the structure sheaf on Spec}, see \cite[Proposition~II.2.3]{Hartshorne} for the analogue construction in the case of rings.
Notice this is a morphism of locally semiringed spaces since, for any prime ideal $\p\in \Spec B$, the induced morphism $A_{\varphi^{-1}(\p)} \to B_{\p}$ is a local morphism of local semirings. 

The fact that $\Spec$ is essentially surjective is obvious from the definition of affine semischemes.
To show that it is fully faithful, 
observe that, if we have a morphism $(f,f^\#)\colon (\Spec B, \mathcal{O}_{\Spec B}) \to (\Spec A, \mathcal{O}_{\Spec A})$
as locally semiringed spaces, then in particular we have a semiring morphism 
\[
    f^\#(\Spec A)\colon \mathcal{O}_{\Spec A}(\Spec A)\to  f_*\mathcal{O}_{\Spec B}(\Spec A)=\mathcal{O}_{\Spec B}(\Spec B).
\]
Identifying the global sections with the corresponding semiring via the isomorphism given in \cref{thm: equivalence of definitions}, one obtains a morphism~$\varphi\colon A\to B$.
This gives an inverse of the map in \eqref{eq: functor Spec on morphism}.
For example, if we take $\p\in \Spec A$, using the commutative diagram
\begin{center}
    \begin{tikzcd}[column sep=small]
    A\cong\mathcal{O}_{\Spec A}(\Spec A) \arrow{r}  \arrow{d}{f^\#} 
    & \mathcal{O}_{\Spec A,\p}\cong A_{\p} \arrow{d}{} \\
   B\cong\mathcal{O}_{\Spec B}(\Spec B) \arrow{r}  & \mathcal{O}_{\Spec B,f(\p)}\cong B_{f(\p)}
\end{tikzcd}
\end{center}
and since the right vertical arrow is local, we see that the maximal ideal in $\mathcal{O}_{\Spec B,f(\p)}$ is the inverse image of the maximal ideal of $\mathcal{O}_{\Spec A,\p}$, and therefore $\Spec(\varphi)(\p)=(f^\#)^{-1}(\p)=f(\p)$. This shows that the continuous map $\Spec(\varphi)$ is the original map~$f$.
In order to show the associated morphism of sheaves is also the same, one can do it just for the basis of principal open sets. In this case, first one shows that for every $a\in A$, $f^{-1}(D(a))=D(\varphi(a))$, and then uses the commutative diagram 
\begin{center}
    \begin{tikzcd}[column sep=small]
    A\cong\mathcal{O}_{\Spec A}(\Spec A) \arrow{r}  \arrow{d}{f^\#} 
    & \mathcal{O}_{\Spec A}(D(a))=S_a^{-1}A \arrow{d}{} \\
   B\cong\mathcal{O}_{\Spec B}(\Spec B) \arrow{r}  & \mathcal{O}_{\Spec B}(D(\varphi(a)))=S_{\varphi(a)}^{-1}B
\end{tikzcd}
\end{center}
to show that the right vertical morphism is the one induced by the universal property of the localization. 
\end{proof}

This construction of the category of affine semischemes should be the same one obtains using Toën and Vaquié general theory \cite{ToenVaquie2009}; this can be shown by similar arguments like in \cite{Marty2012}.

\subsection{The category of semischemes}\label{sec: semiring schemes}

The category of semischemes can then be defined as the full subcategory of the category of locally semiringed spaces which are locally affine. 

\begin{definition}\label{def: semiring schemes}
    A \emph{semischeme} is a (locally) semiringed space $(\mathfrak{X}, \mathcal{O}_\mathfrak{X})$ such that, for every $x\in \mathfrak{X}$, there exist an open neighbourhood $U\subseteq\mathfrak{X}$ of~$x$, a semiring $A_x$ and an isomorphism as (locally) semiringed space between $(U,(\mathcal{O}_\mathfrak{X})_{|U})$ and $ (\Spec A_x, \mathcal{O}_{\Spec A_x})$.
\end{definition}

Observe that if a semiringed space is isomorphic to a locally semiringed space, then it is a locally semiringed space. 

A first example of a semischeme that is not affine in general is given by open subsemischemes of an affine semischeme.
Given any semischeme $(\mathfrak{X},\mathcal{O}_\mathfrak{X})$ and an open $U\subseteq\mathfrak{X}$, the associated \emph{open subsemischeme} is just by definition $(U,(\mathcal{O}_\mathfrak{X})_{|U})$.
Observe that $(U,(\mathcal{O}_\mathfrak{X})_{|U})$ is a semischeme according to \cref{def: semiring schemes}, since $U$ is locally contained in an affine semischeme, and then any of its open subsets can be covered by affine open sets, for example some principal open sets. 

The following example of a non-affine open subsemischeme is analogue to the well-known construction in the case of rings.  

\begin{example}
Let $F$ be a semifield, and let $A=F[x,y]$ be the semiring of polynomials in two variables with coefficients in~$F$.
The ideal $\mathfrak{m}=xA+yA\subseteq A$ is maximal and so it corresponds to a closed point in~$\Spec A$.
Then, its complement $U=(\Spec A)\setminus\{\mathfrak{m}\}$ is open.
We claim that the open subsemischeme $(U,(\mathcal{O}_{\Spec A})_{|U})$ is not affine.

Indeed, taking the covering by principal open sets $U=D(x)\cup D(y)$ and using \eqref{eq: sheaf from Approach B} and \cref{thm: equivalence of definitions}, we obtain that 
\[
(\mathcal{O}_{\Spec A})_{|U}(U)
=
\mathcal{O}_{\Spec A}(U)
\cong
\operatorname{Eq} (A_x\times A_y \rightrightarrows A_{xy})
\cong A.
\]
Therefore, if $(U,(\mathcal{O}_{\Spec A})_{|U})$ was affine, then by \cref{prop: category Spec is equivalent} it would be isomorphic to~$\Spec A$, giving a contradiction.
\end{example}

While open subsets of $\Spec A$ naturally inherit a semischeme structure, for a closed subset~$V(I)\subseteq \Spec A$, with $I$ an ideal of~$A$, it is not always possible to define a natural structure of a semischeme on it, unless $I$ is a kernel. 

\begin{example}
    Consider the maximal ideal $\mathfrak{m}=\mathbb{N}\setminus \{1\}$ of $\mathbb{N}$ from \cref{ex: a prime ideal of N}.
    We will now show that the closed set $Z=\{\mathfrak{m}\}=V(\mathfrak{m}) \subseteq \Spec \mathbb{N}$ has no structure of an affine subsemischeme.

    To do so, consider a morphism of affine semischemes
    \[(f,f^\#)\colon (\Spec A,\mathcal{O}_{\Spec A})\to(\Spec \mathbb{N},\mathcal{O}_{\Spec \mathbb{N}})\]
    such that~$f(\Spec A)\subseteq Z=\{\mathfrak{m}\}$, and let $\varphi\colon \mathbb{N} \to A$ be the corresponding morphism between global sections given by \cref{prop: category Spec is equivalent}.
    If~$A\ne\{0_A\}$, there would exist a prime kernel $\p$ in~$\Spec A$; but then $\varphi^{-1}(\p)=f(\p)=\mathfrak{m}$, which can not be true as $\varphi^{-1}(\p)$ is a kernel, by \cref{lem: preimage of subtractive ideal}, while $\mathfrak{m}$ is not, because of \cref{rem: relation between kernel and quotient}.
    Therefore $A=\{0_A\}$, and so $\Spec A$ is the empty semischeme.
    
    It follows that $Z$ is not the image of an affine semischeme, and so it cannot be given the structure of an affine subsemischeme of~$\Spec \mathbb{N}$.
    A similar argument shows that $Z$ cannot be a non-affine subsemischeme either.  
\end{example}

Other well-known constructions in the ring case can be carried out analogously for semirings. For example, one can define the \emph{projective $n$-space} over a semifield as a semischeme, by gluing $n$-dimensional affine spaces along their intersections, as follows. 

\begin{example}
Let $F$ be a semifield, and consider the polynomial semiring $F[T_0,\dots,T_n]$ graded by total degree.  
For each $i=0,\dots,n$, we form the degree‑zero part of the localization $F[T_0,\dots,T_n]_{T_i}$ and denote it by
\[
A_i
=
F\!\left[\frac{T_0}{T_i},\ldots,
\widehat{\frac{T_i}{T_i}},
\ldots,\frac{T_n}{T_i}\right].
\]
We denote the associated affine semischeme by~$U_i = \Spec A_i$.
For~$i\neq j$, consider the open subset $V_{ij}\subseteq U_i$ described by localizing further at~$T_j/T_i$.
Intuitively, $V_{ij}$ agrees with the intersection~$U_i\cap U_j$, seen as an open subset of~$U_i$, and the element $T_j/T_i$ is invertible in this intersection. 
Thus
\[
V_{ij}=\Spec A_{ij},
\qquad
A_{ij}
= A_i\!\left[ \left(\frac{T_j}{T_i}\right)^{-1} \right].
\]
Similarly, $V_{ji}=\Spec A_{ji}$ with
\[
A_{ji}
= A_j\!\left[ \left(\frac{T_i}{T_j}\right)^{-1} \right].
\]
The canonical identifications $A_{ij}\cong A_{ji}$ induce isomorphisms of affine semischemes
\[
V_{ij} \cong V_{ji}.
\]
These gluing isomorphisms satisfy the usual cocycle condition on triple intersections, and therefore the $n+1$ affine semischemes $U_0,\dots,U_n$ glue uniquely to a semischeme~$\mathbb{P}^n_F.$

\end{example}

In the proof of \cref{prop: category Spec is equivalent} we showed that the morphisms as locally semiringed spaces between two affine semischemes are the same as the morphisms of the associated semirings.
This generalizes to the following property, which can be proved as for rings, see \cite[Lemma~26.6.4]{stacks-project}.

\begin{proposition}\label{prop: Morphism to an affine scheme}
    Let $(\mathfrak{X},\mathcal{O}_\mathfrak{X})$ be any locally semiringed space, and let $A$ be a semiring. Then the map 
    \[\Hom((\mathfrak{X},\mathcal{O}_\mathfrak{X}), (\Spec A,\mathcal{O}_{\Spec A})) \longrightarrow \Hom (A, \mathcal{O}_\mathfrak{X}(\mathfrak{X}))\]
    obtained by sending $(f,f^\#)$ to $f^\#(\Spec A)$ is a bijection.
\end{proposition}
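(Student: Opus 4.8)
The plan is to construct an explicit inverse to the map $(f,f^\#)\mapsto f^\#(\Spec A)$, following the ring-theoretic argument of \cite[Lemma~26.6.4]{stacks-project}. Fix a semiring homomorphism $\varphi\colon A\to\mathcal{O}_\mathfrak{X}(\mathfrak{X})$; I must produce a morphism of locally semiringed spaces $(f,f^\#)\colon(\mathfrak{X},\mathcal{O}_\mathfrak{X})\to(\Spec A,\mathcal{O}_{\Spec A})$ inducing $\varphi$ on global sections, and then verify that the two resulting composites are identities. First I would define the underlying map $f$. For $x\in\mathfrak{X}$, let $\varphi_x\colon A\to\mathcal{O}_{\mathfrak{X},x}$ be the composite of $\varphi$ with the canonical map to the stalk. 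Since $\mathcal{O}_{\mathfrak{X},x}$ is a local semiring and in a local semiring the maximal ideal is exactly the set of non-invertible elements (a non-unit $b$ generates a proper ideal $bB\subsetneq B$ by \cref{rem: ideal generated by invertibles}, which lies in the unique maximal ideal), the preimage $\mathfrak{p}_x:=\varphi_x^{-1}(\mathfrak{m}_x)$ of the maximal ideal $\mathfrak{m}_x$ of $\mathcal{O}_{\mathfrak{X},x}$ is a prime ideal of $A$ by \cref{lem: preimage of ideals}. Set $f(x)=\mathfrak{p}_x$. Continuity follows by checking preimages of basic opens: $f^{-1}(D(a))=\{x\in\mathfrak{X}\mid\varphi(a)\text{ has invertible germ at }x\}$, and this set is open, since an inverse germ is represented by an honest section over some open neighbourhood and, after shrinking, the product with $\varphi(a)$ is $1$ there.

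Next I would build the sheaf morphism $f^\#\colon\mathcal{O}_{\Spec A}\to f_*\mathcal{O}_\mathfrak{X}$. It suffices to define compatible homomorphisms on the basis of principal open sets and extend uniquely, since $\mathcal{O}_{\Spec A}$ is determined by its restriction to this basis (Approach B above). By \cref{thm: equivalence of definitions} we have $\mathcal{O}_{\Spec A}(D(a))\cong A_a=S_a^{-1}A$, so I need a homomorphism $A_a\to\mathcal{O}_\mathfrak{X}(f^{-1}(D(a)))$. The crucial observation is that $\varphi(a)|_{f^{-1}(D(a))}$ is invertible in $\mathcal{O}_\mathfrak{X}(f^{-1}(D(a)))$: by construction it has invertible germ at every point of $f^{-1}(D(a))$, and a section of a sheaf of semirings that is a unit in every stalk over an open set is a unit there, because local inverses exist and glue (multiplicative inverses being unique). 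Hence the composite $A\to\mathcal{O}_\mathfrak{X}(\mathfrak{X})\to\mathcal{O}_\mathfrak{X}(f^{-1}(D(a)))$ sends $a$ to a unit and factors uniquely through $A_a$ by the universal property of localization; compatibility with the restrictions $D(ab)\subseteq D(a)$ is immediate from the same universal property. On stalks, $f^\#$ induces the factorization of $\varphi_x$ through $A_{\mathfrak{p}_x}=\mathcal{O}_{\Spec A,f(x)}$ (well-defined since $\varphi_x$ sends $A\setminus\mathfrak{p}_x$ to units), and this factorization carries $\mathfrak{p}_xA_{\mathfrak{p}_x}$ into $\mathfrak{m}_x$ by the very definition of $\mathfrak{p}_x$. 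Thus $(f,f^\#)$ is a morphism of locally semiringed spaces, and applying it to global sections ($a=1_A$) gives back $\varphi$.

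Finally I would check the two composites are the identity. One direction is the previous sentence. For the other, start with an arbitrary $(g,g^\#)$, put $\varphi=g^\#(\Spec A)$, and run the construction above to obtain $(f,f^\#)$. To see $f=g$, use that $g^\#$ is local on stalks, so that $\varphi_x$ coincides with the composite $A\cong\mathcal{O}_{\Spec A}(\Spec A)\to\mathcal{O}_{\Spec A,g(x)}=A_{g(x)}\to\mathcal{O}_{\mathfrak{X},x}$, whose preimage of $\mathfrak{m}_x$ is necessarily $g(x)$ — this is exactly the diagram chase already used in the proof of \cref{prop: category Spec is equivalent}. Then $f^\#=g^\#$: since $g^{-1}(D(a))=f^{-1}(D(a))$ and both morphisms agree with $\varphi$ on global sections, both restrict over each $D(a)$ to the unique homomorphism $A_a\to\mathcal{O}_\mathfrak{X}(f^{-1}(D(a)))$ determined by $\varphi$ and the universal property of localization, hence coincide on the basis and therefore as morphisms of sheaves. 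The main obstacle in all of this is the sheaf-theoretic fact that a stalk-wise unit is a unit (used both for continuity of $f$ and for the factorization defining $f^\#$); everything else is routine bookkeeping with universal properties of localization and the stalk-locality condition, carried out just as for rings once one knows that in a local semiring the non-units form the maximal ideal.
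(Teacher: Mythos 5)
Your argument is correct and is precisely the adaptation of \cite[Lemma~26.6.4]{stacks-project} that the paper invokes without writing out: define $f(x)=\varphi_x^{-1}(\mathfrak{m}_x)$, build $f^\#$ on the basis of principal opens via the universal property of localization (using that a stalkwise unit is a unit, and that $S_a^{-1}A\cong A_a$ so inverting $a$ suffices), and check the two composites via locality on stalks. The one semiring-specific point you rightly address --- that in a local semiring the unique maximal ideal is exactly the set of non-units, hence prime with prime preimage --- is handled correctly, so there is nothing to add.
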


\section{The structure sheaf for \texorpdfstring{$\Sp$}{}}\label{sec: structure sheaf for Sp}

We fix again for the entire section a semiring $A$ and we now let $X = \Sp A$ be the associated $k$-spectrum, seen as a topological space like in \cref{sec: spectrum constructions}.

We aim to emulate the constructions of \cref{sec: structure sheaf for Spec} to obtain a suitable structure sheaf on~$X$.
In the process, we highlight some new difficulties in the determination of its local sections on principal open subsets, motivating the definition of global semirings in \cref{subs: Global}.

\subsection{The construction of the structure sheaf}

Analogously to the definition in \eqref{eq: multiplicative submonoid associate to open U}, for any open subset $U\subseteq X$ we set
\begin{equation}
    \widetilde{S}_U = \{b\in A\mid \bD(b)\supseteq U \}\subseteq A.
\end{equation}
In the particular case in which $U=\bD(a)$ for some $a\in A$ we write for short
\[
\widetilde{S}_a=\widetilde{S}_{\bD(a)}.
\]

The following statement concerns the algebraic properties of such sets.

\begin{lemma}\label{lem: submonoid of open set for kernels}
The set $\widetilde{S}_{1_A}$ is the set of semi-invertible elements of~$A$.
Moreover, for any open subset~$U\subseteq X$, the set $\widetilde{S}_U$ is a saturated multiplicative submonoid of~$A$ and the localization $\widetilde{S}_U^{-1}A$ is a hard semiring.
\end{lemma}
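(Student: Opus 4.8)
The plan is to establish the three assertions in order, borrowing the template of \cref{lem: submonoid of open set} for the first two and that of \cref{prop: hardening is hard} for the last. For the identification of $\widetilde{S}_{1_A}$, I would observe that $\bD(1_A)=\Sp A$, since $1_A$ lies in no proper ideal; hence $\widetilde{S}_{1_A}=\{b\in A\mid \bD(b)\supseteq\Sp A\}=\{b\in A\mid \bD(b)=\Sp A\}$, which by \cref{lem: principal open sets are basis for Zariski topology of Sp}.(1) is exactly the set of semi-invertible elements of $A$. As promised in the surrounding text, this is independent of \cref{lem: semiinvertibles are saturated multiplicative monoid}: that set is then seen to be a saturated multiplicative submonoid through the general statement about $\widetilde{S}_U$ proved next.

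For an arbitrary open $U\subseteq X$, I would check that $\widetilde{S}_U$ is a saturated multiplicative submonoid exactly as in \cref{lem: submonoid of open set}: one has $1_A\in\widetilde{S}_U$ because $\bD(1_A)=\Sp A\supseteq U$, and the equality $\bD(ab)=\bD(a)\cap\bD(b)$ from \cref{lem: principal open sets are basis for Zariski topology of Sp} turns the condition $ab\in\widetilde{S}_U$ into $\bD(a)\cap\bD(b)\supseteq U$, which holds if and only if $\bD(a)\supseteq U$ and $\bD(b)\supseteq U$, i.e.\ if and only if $a\in\widetilde{S}_U$ and $b\in\widetilde{S}_U$. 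This single equivalence yields both multiplicative closure and saturation.

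The substantive part is the hardness of $B:=\widetilde{S}_U^{-1}A$. Since $\widetilde{S}_U$ is saturated, an element $a/s\in B$ is invertible if and only if $a\in\widetilde{S}_U$ (cf.\ the proof of \cref{prop: hardening is hard} and \cref{lem: Saturation}), and as invertible elements are always semi-invertible it suffices to prove that every semi-invertible $a/s\in B$ satisfies $a\in\widetilde{S}_U$, that is $U\subseteq\bD(a)$. Writing semi-invertibility of $a/s$ as $1_A/1_A+(a/s)(b/u)=(a/s)(c/v)$ with $b,c\in A$ and $u,v\in\widetilde{S}_U$ and clearing denominators, I would obtain $w\in\widetilde{S}_U$ with
\[
s^2uvw + a(bsvw)=a(csuw),
\]
so that $\sigma:=s^2uvw$ lies in the subtractive closure $\overline{aA}$ and, being a product of elements of $\widetilde{S}_U$, also lies in $\widetilde{S}_U$. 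Now fix any $\p\in U$: if $a\in\p$, then $aA\subseteq\p$, and since $\p$ is a prime subtractive ideal this forces $\overline{aA}\subseteq\p$, whence $\sigma\in\p$; but $\sigma\in\widetilde{S}_U$ means $\p\in U\subseteq\bD(\sigma)$, i.e.\ $\sigma\notin\p$, a contradiction. Hence $a\notin\p$ for every $\p\in U$, which is precisely $U\subseteq\bD(a)$, so $a\in\widetilde{S}_U$ and $a/s$ is invertible; thus $B$ is hard.

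I expect the only genuine obstacle to be conceptual rather than computational: the argument runs for an \emph{arbitrary} open $U$ — not merely for $U=\Sp A$ or for the complement of a single prime kernel, as in \cref{prop: hardening is hard} and \cref{lem: localization at prime kernel is hard} — precisely because every point of $\Sp A$ is a subtractive ideal, so that $a\in\p$ implies $\overline{aA}\subseteq\p$. The denominator clearing is routine bookkeeping, where only the membership of $\sigma$ in $\widetilde{S}_U$ and in $\overline{aA}$ matters and not the exponents, and the passage between $a\in\widetilde{S}_U$, $U\subseteq\bD(a)$, and the pointwise statement "$a\notin\p$ for all $\p\in U$" is immediate from the definition of $\widetilde{S}_U$.
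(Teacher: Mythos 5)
Your proof is correct and follows essentially the same route as the paper: the first two parts transpose \cref{lem: submonoid of open set} using \cref{lem: principal open sets are basis for Zariski topology of Sp}, and the hardness argument mirrors \cref{prop: hardening is hard}/\cref{lem: localization at prime kernel is hard}, arriving at the same element $\sigma=s^2uvw\in\overline{aA}\cap\widetilde{S}_U$. The only (minor) difference is at the very end: the paper concludes in one line by citing the statement of \cref{lem: principal open sets are basis for Zariski topology of Sp} that $b\in\overline{aA}$ implies $\bD(b)\subseteq\bD(a)$, giving $U\subseteq\bD(\sigma)\subseteq\bD(a)$, whereas you unfold that citation into a pointwise contradiction argument at each $\p\in U$ — same content, just spelled out.
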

\begin{proof}
    The first claim is a direct consequence of \cref{lem: principal open sets are basis for Zariski topology of Sp}.
    The first part of the second is proved as \cref{lem: submonoid of open set}, by using \cref{lem: principal open sets are basis for Zariski topology of Sp} instead of \cref{lem: principal open sets are basis for Zariski topology of Spec}.


    To prove the last claim, we follow the same argument as in \cref{prop: hardening is hard} and \cref{lem: localization at prime kernel is hard}. It is enough to show that any semi-invertible element $a/s$ of~$\widetilde{S}_U^{-1}A$ satisfies that~$a\in \widetilde{S}_U$, or equivalently that~$\bD(a)\supseteq U$. 

Since $a/s$ is semi-invertible, there exist $b,c\in A$ and $u,v,w\in\widetilde{S}_U$, such that 
$s^2uvw + a (bsvw)=a(csuw)$. 
Hence, $s^2uvw$ is in the subtractive ideal generated by $a$, and therefore $\bD(a)\supseteq \bD(s^2uvw)\supseteq U$ by \cref{lem: principal open sets are basis for Zariski topology of Sp}. 
\end{proof}

\begin{remark}\label{rem: S1 and S1tilde}
For all open set $U\subseteq\Spec A$ it is immediate to verify that
\[
S_U\subseteq\widetilde{S}_{U\cap X}.
\]
In particular, $S_a\subseteq\widetilde{S}_a$ for all~$a\in A$.
Notice that the inclusion can be strict: for example, it follows from \cref{lem: principal open sets are basis for Zariski topology of Spec,lem: submonoid of open set for kernels} that $S_{1_A}$ is the set of invertible elements in~$A$, while $\widetilde{S}_{1_A}$ consists of its semi-invertible elements.
Recall that these two sets differ for non-hard semirings such as for~$A=\mathbb{B}[x]$, see \cref{ex: non-hard semiring}.
\end{remark}

We can now follow the same approaches as in \cref{subsec: three constructions of the structure sheaf on Spec} to construct the structure sheaf on~$X$.

The first option is to consider the presheaf of semirings $\mathcal{L}_A$ on $X$ given by
\begin{equation}\label{eq: presheaf LA for Sp A}
    \mathcal{L}_A(U)=\widetilde{S}_U^{-1} A
\end{equation}for all open subset~$U\subseteq X$, and to define the structure sheaf
\[
\mathcal{O}_X= \mathcal{L}_A^\#
\]
on $X$ to be its sheafification.

\begin{example}
    It could happen that the localization presheaf is already a sheaf.
    For instance, consider the semiring of natural numbers~$\mathbb{N}$ and recall that, as seen in \cref{ex: Sp and Spec of N}, we have a natural identification
    \[
    \Sp \mathbb{N}\cong\Spec \mathbb{Z}.
    \]
    The localization presheaf $\mathcal{L}_{\mathbb{N}}$ on $\Sp\mathbb{N}$ is already a sheaf.
    In fact, similarly to \cref{ex: sheaf on Spec N}, any nonempty open subset of $\Sp \mathbb{N}$ is of the form $\bD(N)$ for some nonzero natural number~$N$, and it is easily verified that
    \[
    \mathcal{L_\mathbb{N}}(\bD(N))=\mathbb{N}[1/N]=\calO_{\Spec \mathbb{Z}}(D(N))\cap \mathbb{Q}_{\ge 0}.
    \]
    
    The semifield of positive rational numbers $\mathbb{Q}_{\ge 0}$ is the semifield of fractions of~$\mathbb{N}$, and also the stalk at $0$ of the sheaf~$\mathcal{O}_{\Sp\mathbb{N}}=\mathcal{L}_{\mathbb{N}}$.
\end{example}

Alternatively, one can replicate the approach C from \cref{subsec: three constructions of the structure sheaf on Spec} to construct the structure sheaf $\calO_X$ directly, following \cite[section~II.2]{Hartshorne} or \cite[section~6.17]{stacks-project}. 

As in \cref{thm: equivalence of definitions}, these two approaches give isomorphic sheaves on~$X$.

\begin{remark}\label{rem: A and Ahard have isomorphic semiringed space}
    Thanks to \cref{lem: submonoid of open set for kernels}, the semiring $\mathcal{L}_A(U)$ is always hard, and in fact it is shown in \cite{MasdeuRoeXarles} that, if $\chi\colon A\to A^\diamondsuit$ denotes the hardening morphism, then $\Sp(\chi)_*\mathcal{L}_{A^\diamondsuit}\cong \mathcal{L}_A$.
    It follows that the homeomorphism $\Sp(\chi)$ from \cref{lem: hardering on Sp} induces an isomorphism of semiringed spaces
    \[
    (\Sp A,\mathcal{O}_{\Sp A})\cong(\Sp A^\diamondsuit,\mathcal{O}_{\Sp A^\diamondsuit}).
    \]
    As a result, when studying the $k$-spectral version of affine semiring schemes, it is enough to restrict to the case of hard semirings.
\end{remark}

It would be advantageous for computations to have a basis for the topology of $X$ over which the local sections of $\mathcal{O}_X$ are known.
In the spectral case, this was ensured by the fact that the restriction of the localization presheaf to principal open subsets is a sheaf on such a basis, as seen in the second approach in \cref{subsec: three constructions of the structure sheaf on Spec}.

To make this work in the $k$-spectral setting as well, we should check whether the presheaf $\mathcal{L}_A$ is a sheaf on the basis of principal open sets of~$X$.
More precisely, and analogously to \cref{lem: SheafLemma}, we would need to show that, given $a\in A$ and  $a_i \in A$ for $i\in I$ such that $\bigcup_{i\in I} \bD(a_i) = \bD(a)$, the morphism
\begin{equation}\label{eq: exactness on principal basis for Sp structure sheaf}
    \widetilde{S}_a^{-1} A \longrightarrow\operatorname{Eq}\bigg(\prod_{i\in I} \widetilde{S}_{a_i}^{-1} A \rightrightarrows \prod _{i,j\in I}  \widetilde{S}_{a_ia_j}^{-1} A\bigg)
\end{equation}
is an isomorphism.

\begin{remark}\label{rem: exactness for Sp sheaf in the global case}
    It is already interesting to study what happens in the global case, that is when~$a=1_A$.
    In such a situation, \cref{lem: submonoid of open set for kernels} and \eqref{eq: definition of hardening} imply
    \[
    \widetilde{S}_{1_A}^{-1}A=A^\diamondsuit.
    \]
    Moreover, by \cref{lem: principal open sets are basis for Zariski topology of Sp} a collection of principal open sets $\bD(a_i)$ is a covering of $\Sp A$ if and only if the subtractive ideal generated by the set of $a_i$ is equal to~$A$.
    
    Hence, checking \eqref{eq: exactness on principal basis for Sp structure sheaf} when $a=1_A$ translates into showing that for any family of $a_i$ with~$i\in I$ such that the subtractive ideal generated by $a_i$ is~$A$, the morphism
    \begin{equation}\label{eq: global exactness on principal basis for Sp structure sheaf}
    A^\diamondsuit\longrightarrow\operatorname{Eq}\bigg(\prod_{i\in I} \widetilde{S}_{a_i}^{-1} A \rightrightarrows \prod _{i,j\in I}  \widetilde{S}_{a_ia_j}^{-1} A\bigg)
    \end{equation}
    is an isomorphism.
\end{remark}

Unfortunately, it turns out that the morphism in \eqref{eq: exactness on principal basis for Sp structure sheaf} is not an isomorphism in general.
In fact, it can fail to be injective even in the global case considered in~\cref{rem: exactness for Sp sheaf in the global case}.

\begin{example}[\cite{MasdeuRoeXarles}]
    Let $A= \mathbb{N}[x,y]/(x^2\sim x, y^2 \sim y, 1+x \sim x+y)$.
    This semiring is hard, as its only semi-invertible element is the class of~$1$.
    Notice that~$\overline{xA+yA}=A$, so that~$\Sp A=\bD(x)\cup\bD(y)$.
    One can show that $1+xy$ and $x+y$ have the same image via $ A \rightarrow A[x^{-1}]\times A[y^{-1}]$, namely $\left((1+y)/1,(1+x)/1\right)$, but $1+xy\neq x+y$ in~$A$, contradicting the injectivity of~\eqref{eq: global exactness on principal basis for Sp structure sheaf}.
\end{example}

However, the injectivity of the morphism in \eqref{eq: global exactness on principal basis for Sp structure sheaf} is guaranteed in the idempotent case, as the next result shows.

\begin{lemma}
If $A$ is a hard idempotent semiring, and $a_i\in A$ for $i\in I$ are such that $\Sp A=\bigcup_{i\in I} \bD(a_i)$, then the morphism
\[A^\diamondsuit\longrightarrow \prod_{i\in I} \widetilde{S}_{a_i}^{-1} A \] is injective.    
\end{lemma}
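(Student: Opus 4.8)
The plan is to reduce everything to injectivity of the plain localization map and then to use the covering hypothesis together with hardness to manufacture an invertible element that witnesses the desired equality. First, since $A$ is hard the monoid $\wA A$ of semi-invertible elements coincides with the group of units $A^\times$, so $A^\diamondsuit=(\wA A)^{-1}A$ is canonically identified with $A$ via the hardening morphism $\chi_A$ (see \cref{prop: hardening is hard}); hence it suffices to prove that the map $A\to\prod_{i\in I}\widetilde{S}_{a_i}^{-1}A$ is injective. So let $x,y\in A$ have the same image in every $\widetilde{S}_{a_i}^{-1}A$: by the definition of localization there is, for each $i\in I$, an element $u_i\in\widetilde{S}_{a_i}$ with $x u_i=y u_i$.

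Next I would feed in the covering condition. Since $u_i\in\widetilde{S}_{a_i}$ means precisely $\bD(u_i)\supseteq\bD(a_i)$, the principal opens $\bD(u_i)$ with $i\in I$ still cover $\Sp A$, so by \cref{lem: principal open sets are basis for Zariski topology of Sp} the \emph{subtractive} ideal generated by $\{u_i\}_{i\in I}$ is all of $A$. In particular $1_A$ lies in the subtractive closure of the ordinary ideal $J=\sum_{i\in I}u_iA$, and unwinding the congruence defining $\overline{J}$ together with the idempotency of $A$ (as in the manipulation of \cref{rmk: Kernels for idempotent semirings}) produces an element $b\in J$ with $1_A+b=b$, i.e. $1_A\preceq b$. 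As $b$ belongs to $J$ it is a finite combination $b=\sum_{i\in T}u_ic_i$ with $T\subseteq I$ finite and $c_i\in A$. Now the key point: $1_A\preceq b=b\cdot 1_A$ exhibits $b$ as semi-invertible by \cref{rem: Semi-invertible for idempotent semirings}, and since $A$ is hard, $b$ is invertible.

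Finally, using $x u_i=y u_i$ for the finitely many $i\in T$,
\[
xb=\sum_{i\in T}x u_i c_i=\sum_{i\in T}y u_i c_i=yb,
\]
and multiplying by $b^{-1}$ gives $x=y$, proving injectivity. The only delicate step is the production of the unit $b$: this is where idempotency is indispensable, since it is idempotency that upgrades the two-sided relation "$1_A+p=q$ with $p,q$ in the ideal $J$'', coming from membership in the subtractive closure, into the one-sided relation "$1_A+b=b$'' that makes $b$ visibly semi-invertible — and hence, by hardness, an actual unit. (Note that without idempotency this fails, as the example $A=\mathbb{N}$ already shows.)
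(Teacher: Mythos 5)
Your proof is correct and follows essentially the same route as the paper's: reduce to $A$ via hardness, pull back the covering to the $u_i$ witnessing $xu_i=yu_i$, and use the fact that in a hard idempotent semiring an ideal with total subtractive closure is total. The only difference is cosmetic — you re-derive \cref{prop: ideals whose kernel is total} inline (producing the unit $b$ with $1_A+b=b$ and concluding from $xb=yb$) where the paper cites that proposition and then finishes as in \cref{subl: injectivity global} via a representation $1_A=\sum b_iu_i$.
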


\begin{proof}
As the semiring is hard, we have that~$A^\diamondsuit\cong A$.
The key point is that, for a hard idempotent semiring, an ideal whose subtractive closure is the total is indeed the total ideal by \cref{prop: ideals whose kernel is total}.
With this observation, the desired injectivity is proved similarly as in \cref{subl: injectivity global}.
\end{proof}

As for the surjectivity of the morphism in \eqref{eq: global exactness on principal basis for Sp structure sheaf}, we do not have a counterexample, but we suspect it is not true in general.
Still, it trivially holds in some important elementary cases, as in the one of the next example. 

\begin{example}\label{ex: trivial coverings on Polynomials}
    Let $F$ be an idempotent semifield, and consider the semiring~$A=F[x_1,\ldots,x_n]$.
    Then, the morphism \eqref{eq: global exactness on principal basis for Sp structure sheaf} for such $A$ is an isomorphism.  
    This follows from the stronger fact that any covering $\Sp A=\bigcup_{i\in I} \bD(a_i)$ is trivial, in the sense that $\Sp A=\bD(a_i)$ for some~$i\in I$. 

    To show this last claim, consider the maximal ideal~$\mathfrak{m}=x_1A+\ldots+x_nA$.
    Since  $\Sp A=\bigcup_{i\in I} \bD(a_i)$, there exists $i\in I$ such that~$\mathfrak{m}\in\bD(a_i)$.
    This means that $a_i\notin\mathfrak{m}$ and therefore $a_i$ is a polynomial with non-zero free term.
    By \cref{lem: semi-invertibles in polynomial ring} it is semi-invertible, which implies that $\bD(a_i)=\Sp A$ by \cref{lem: principal open sets are basis for Zariski topology of Sp}.
    
    Notice that the same argument applies to any semiring which is kernel-local, meaning it has a unique maximal kernel.
\end{example}

\subsection{Global semirings}\label{subs: Global}

Recall that the semiring of global sections of the structure sheaf on $\Spec A$ is isomorphic to~$A$, as seen in \cref{thm: equivalence of definitions}.

This is no longer true in the $k$-spectral case, as the natural morphism
\[
A\longrightarrow \Gamma A=\calO_{\Sp A}(\Sp A)
\] 
can fail to be an isomorphism.
It is then interesting to consider the map that associates to each semiring $A$ the corresponding semiring $\Gamma A$ of global sections of the structure sheaf. 

\begin{lemma}\label{lem: Gamma is functor}
    The mapping $A\mapsto \Gamma A$ is a functor from the category of semirings to itself.   
\end{lemma}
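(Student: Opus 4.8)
The plan is to construct, for each morphism $f\colon A\to B$ of semirings, an induced morphism $\Gamma f\colon \Gamma A\to\Gamma B$, and then to check that this assignment respects identities and composition. The fact that each $\Gamma A$ is a semiring is already built into the construction of the structure sheaf (global sections of a sheaf of semirings form a semiring), so only the behaviour on morphisms requires work.

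First I would produce the data at the level of the localization presheaf $\mathcal{L}_A$ from \eqref{eq: presheaf LA for Sp A}. Given $f\colon A\to B$, the continuous map $\Sp(f)\colon\Sp B\to\Sp A$ of \cref{lem: continuity of morphisms} satisfies $\Sp(f)^{-1}(\bD(b))=\bD(f(b))$ for every $b\in A$, since $b\notin f^{-1}(\mathfrak{q})$ is equivalent to $f(b)\notin\mathfrak{q}$. Hence, if $U\subseteq\Sp A$ is open and $b\in\widetilde{S}_U$, i.e.\ $\bD(b)\supseteq U$, then $\bD(f(b))=\Sp(f)^{-1}(\bD(b))\supseteq\Sp(f)^{-1}(U)$, so $f(b)\in\widetilde{S}_{\Sp(f)^{-1}(U)}$. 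Thus $f$ carries $\widetilde{S}_U$ into the set of elements that become invertible in $\widetilde{S}_{\Sp(f)^{-1}(U)}^{-1}B$, and the universal property of localization gives a unique semiring morphism
\[
\mathcal{L}_A(U)=\widetilde{S}_U^{-1}A\longrightarrow\widetilde{S}_{\Sp(f)^{-1}(U)}^{-1}B=\big(\Sp(f)_*\mathcal{L}_B\big)(U)
\]
compatible with $f$. By uniqueness these are compatible with restrictions, so they assemble into a morphism of presheaves $\mathcal{L}_A\to\Sp(f)_*\mathcal{L}_B$. Composing with the canonical map $\Sp(f)_*\mathcal{L}_B\to\Sp(f)_*(\mathcal{L}_B^\#)=\Sp(f)_*\mathcal{O}_{\Sp B}$ gives a presheaf morphism $\mathcal{L}_A\to\Sp(f)_*\mathcal{O}_{\Sp B}$ into a sheaf, which by the universal property of sheafification factors uniquely through $\mathcal{O}_{\Sp A}=\mathcal{L}_A^\#$; evaluating the resulting map $f^\#\colon\mathcal{O}_{\Sp A}\to\Sp(f)_*\mathcal{O}_{\Sp B}$ on the open set $\Sp A$ and using $\Sp(f)^{-1}(\Sp A)=\Sp B$ yields
\[
\Gamma f\colon \Gamma A=\mathcal{O}_{\Sp A}(\Sp A)\longrightarrow\big(\Sp(f)_*\mathcal{O}_{\Sp B}\big)(\Sp A)=\mathcal{O}_{\Sp B}(\Sp B)=\Gamma B.
\]
Alternatively, using the section-theoretic (Hartshorne-style) description of $\mathcal{O}_{\Sp A}$, one can define $\Gamma f$ directly: $f$ induces local morphisms $A_{f^{-1}(\mathfrak{q})}\to B_{\mathfrak{q}}$ for every $\mathfrak{q}\in\Sp B$, and a global section $s$ is sent to the function $\mathfrak{q}\mapsto\big(\text{image of }s(f^{-1}(\mathfrak{q}))\text{ in }B_{\mathfrak{q}}\big)$, which inherits the local-quotient property from $s$ since locally $s=a/g$ forces the image to be locally $f(a)/f(g)$.

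Finally, the identities $\Gamma(\mathrm{id}_A)=\mathrm{id}_{\Gamma A}$ and $\Gamma(g\circ f)=\Gamma(g)\circ\Gamma(f)$ follow from functoriality of $\Sp$, of the pushforward $(-)_*$, and of sheafification, together with the uniqueness clauses of the universal properties used above; the section-theoretic description makes this essentially immediate. I do not expect a serious obstacle here: the only mildly delicate point is that pushforward does not commute with sheafification in general, so one must invoke the canonical comparison morphism $\Sp(f)_*(\mathcal{L}_B^\#)=\Sp(f)_*\mathcal{O}_{\Sp B}$ (a sheaf) in the right direction, as above, rather than trying to identify it with $(\Sp(f)_*\mathcal{L}_B)^\#$; beyond that, every step is routine verification.
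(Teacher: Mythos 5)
Your construction is correct and follows essentially the same route as the paper: establish $\Sp(f)^{-1}(\bD(b))=\bD(f(b))$, deduce $f(\widetilde{S}_U)\subseteq\widetilde{S}_{\Sp(f)^{-1}(U)}$, obtain presheaf-level maps $\mathcal{L}_A(U)\to\mathcal{L}_B(\Sp(f)^{-1}(U))$ from the universal property of localization, and pass to sheafifications before evaluating on the total space. Your treatment is in fact slightly more careful than the paper's (which just invokes ``standard properties of the sheafification''), correctly routing the comparison through $\Sp(f)_*\mathcal{O}_{\Sp B}$ rather than $(\Sp(f)_*\mathcal{L}_B)^\#$, and explicitly noting the functoriality checks.
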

\begin{proof}
For any morphism $f\colon A\to B$, we can construct the corresponding morphism $\Gamma f\colon\Gamma A\to \Gamma B$ as follows.
    Notice that for all $a\in A$ we have
    \[
    \bD(f(a))=(\Sp(f))^{-1}(\bD(a)).
    \]
    Using this equality, one deduces that for any open subset $U\subseteq \Sp A$,
    \[
    f(\widetilde{S}_U)\subseteq \widetilde{S}_{U'},
    \]
    with~$U'=(\Sp(f))^{-1}(U)$.
    Therefore, the morphism $\varphi\circ f\colon A\to \widetilde{S}_{U'}^{-1}B$, with $\varphi\colon B\to \widetilde{S}_{U'}^{-1}B$ being the localization morphism, factors through a unique morphism 
    $\mathcal{L}_A(U)=\widetilde{S}_{U}^{-1}A \to \widetilde{S}_{U'}^{-1}B=\mathcal{L}_B(U')$.
    Standard properties of the sheafification imply that we have a morphism $\calO_{\Sp A}(U) \to \calO_{\Sp B}(U')$.
    By taking $U=\Sp A$ we get the desired morphism.
\end{proof}

We reserve a special name to the semirings for which the global sections of the $k$-spectral version of the structure sheaf agrees with the semiring itself.

\begin{definition}
    We say that a semiring $A$ is \emph{global} if the natural morphism $A\to\Gamma A $ is an isomorphism. 
\end{definition}

It is possible to show that any global semiring is hard (see \cite{MasdeuRoeXarles}). 
Conversely, \cref{rem: A and Ahard have isomorphic semiringed space} and \cref{ex: trivial coverings on Polynomials} show that the hardening of the polynomial ring in $n\ge 0$ variables over an idempotent semifield is global.
More generally, every kernel-local semiring $A$ with maximal kernel $\mathfrak{m}$ satisfies
\[
\Gamma A^\diamondsuit\cong \Gamma A=\calO_{\Sp A}(\Sp A)\cong A_{\mathfrak{m}}\cong A^{\diamondsuit},
\]
and so $A^\diamondsuit$ is a global semiring.


The following result gives a construction of a globalization functor, assigning to any semiring a global semiring in a universal way.  The proof is straightforward and left to the reader.  

\begin{proposition}
    Let $A$ be a semiring and consider the direct system $\{\Gamma^n A\mid n\ge 0\}$ with the natural morphisms $\Gamma^n A\to \Gamma^{n+1}A$.
    Then the direct colimit
    \[
    G(A)=\operatorname{colim}_{n} \Gamma^n A
    \]
    is a global semiring, and it comes with a morphism $A \to G(A)$ that is universal with respect to morphisms to the full subcategory of global semirings.
    
    Moreover, the mapping $G$ gives a functor to global semirings, left adjoint to the inclusion functor. 
\end{proposition}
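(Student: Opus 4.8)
The plan is to treat $(\Gamma,\eta)$ — where $\eta_A\colon A\to\Gamma A$ denotes the natural morphism to the semiring of global sections of the structure sheaf — as a pointed endofunctor on the category of commutative semirings, and to recognize $G(A)=\operatorname{colim}_n\Gamma^nA$ as the associated reflection onto the full subcategory of global semirings. Concretely, I would first fix the transition morphisms $\gamma_n\colon\Gamma^nA\to\Gamma^{n+1}A$ to be $\eta_{\Gamma^nA}$, which makes sense because $\Gamma$ is a functor by \cref{lem: Gamma is functor} and $\eta$ is natural; since the category of semirings is cocomplete (filtered colimits are computed on underlying sets), the colimit $G(A)$ exists with coprojections $\iota_n\colon\Gamma^nA\to G(A)$, and we set $u_A=\iota_0\colon A\to G(A)$. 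Functoriality of $G$ and naturality of $u$ are then formal: a morphism $f\colon A\to B$ induces the maps $\Gamma^nf$, which commute with the $\gamma_n$'s by naturality of $\eta$, hence a morphism $G(f)$ with $G(f)\circ u_A=u_B\circ f$.

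Two structural properties of $\Gamma$ drive the rest of the argument, and I expect their verification to be the only real work. The first is that $\Gamma$ commutes with the filtered colimit defining $G(A)$; this should follow from $\Sp A$ being quasi-compact (\cref{lem: principal open sets are basis for Zariski topology of Sp}), so that $\calO_{\Sp A}(\Sp A)$, being the global sections of a sheafification on a quasi-compact space, is a filtered colimit of the $0$-th Čech semirings $\operatorname{Eq}\big(\prod_i\widetilde{S}_{a_i}^{-1}A\rightrightarrows\prod_{i,j}\widetilde{S}_{a_ia_j}^{-1}A\big)$ over finite principal coverings; since localization and finite equalizers commute with filtered colimits, and since coverings of $\Sp A$ by principal opens whose generators come from the terms of a filtered system $A=\operatorname{colim}_i A_i$ are cofinal, one obtains $\Gamma(\operatorname{colim}_iA_i)\cong\operatorname{colim}_i\Gamma A_i$. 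The second is that $\Gamma$ is well-pointed, i.e. $\Gamma\eta_A=\eta_{\Gamma A}$ for all $A$; this forces $\Gamma\gamma_n=\gamma_{n+1}$.

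Granting these, $G(A)$ is global: applying $\Gamma$ to the colimit diagram and using well-pointedness identifies $\Gamma G(A)$ with $\operatorname{colim}_n\big(\Gamma^{n+1}A,\gamma_{n+1}\big)$, with coprojections $\Gamma\iota_n$; the cocone $(\iota_{n+1})_n$ then produces a morphism $\psi\colon\Gamma G(A)\to G(A)$, and a short diagram chase on coprojections — using $\eta_{G(A)}\circ\iota_n=\Gamma(\iota_n)\circ\gamma_n$ — shows $\psi$ is a two-sided inverse of $\eta_{G(A)}$. For the universal property, given a global semiring $B$ and $f\colon A\to B$, the recursion $g_0=f$, $g_{n+1}=\eta_B^{-1}\circ\Gamma(g_n)$ defines morphisms $g_n\colon\Gamma^nA\to B$ with $g_{n+1}\circ\gamma_n=g_n$ (this identity is exactly the naturality square of $\eta$ at $g_n$ after applying $\eta_B^{-1}$), hence a morphism $\widehat f\colon G(A)\to B$ with $\widehat f\circ u_A=f$; conversely, if $h\circ u_A=f$ then setting $g_n=h\circ\iota_n$, the naturality of $\eta$ at $g_{n+1}$ combined with $\Gamma\gamma_n=\gamma_{n+1}$ forces $g_{n+1}=\eta_B^{-1}\circ\Gamma(g_n)$, so $h=\widehat f$. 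The resulting bijection $\Hom(G(A),B)\cong\Hom(A,B)$, natural in $A$ and $B$, is precisely the adjunction exhibiting $G$ as left adjoint to the inclusion of global semirings. I expect the main obstacle to be establishing the two structural facts about $\Gamma$ — commutation with filtered colimits and well-pointedness — since the remaining steps are routine manipulations with pointed endofunctors, reducing entirely to naturality of $\eta$ and the universal property of colimits.
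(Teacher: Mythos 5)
The paper gives no proof of this proposition (it is explicitly ``left to the reader''), so there is nothing to compare your argument against; I can only assess it on its own terms. Your architecture --- view $(\Gamma,\eta)$ as a pointed endofunctor, realize $G(A)$ as the $\omega$-colimit of the chain $\gamma_n=\eta_{\Gamma^nA}$, and reduce both globality of $G(A)$ and the universal property to (i) preservation of this sequential colimit by $\Gamma$ and (ii) well-pointedness $\Gamma\eta_A=\eta_{\Gamma A}$ --- is the standard reflection construction and surely the intended one, and the formal part of your write-up (the map $\psi$, the recursion $g_{n+1}=\eta_B^{-1}\circ\Gamma(g_n)$, uniqueness, the adjunction) is correct. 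The gap is that (i) and (ii) are precisely where all the content lives, and you prove neither. For (ii) you offer no argument at all; it is not formal, since $\Gamma(\eta_A)$ is built in \cref{lem: Gamma is functor} out of the continuous map $\Sp(\eta_A)$ and induced morphisms of localizations, whereas $\eta_{\Gamma A}$ arises from sheafifying $\mathcal{L}_{\Gamma A}$, and identifying the two requires an actual comparison of these constructions.

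For (i), the sketch you do give contains a concrete error: $\calO_{\Sp A}(\Sp A)$ is the global sections of the sheafification $\mathcal{L}_A^{\#}=\mathcal{L}_A^{++}$, not of a single plus construction, and the filtered colimit of \v{C}ech equalizers over coverings that you write down computes $\mathcal{L}_A^{+}(\Sp A)$. These agree only when $\mathcal{L}_A$ is separated, and the paper exhibits a hard semiring for which the \v{C}ech injectivity in \eqref{eq: global exactness on principal basis for Sp structure sheaf} fails, so $\mathcal{L}_A$ is not separated in general. One could repair this by iterating the plus construction, but a second, unaddressed obstruction remains: to push the colimit over $n$ past the localizations $\widetilde{S}_{U}^{-1}(-)$ you must show that the saturation of $\bigcup_n \mathrm{im}\big(\widetilde{S}_{U_n}\big)$ in the colimit equals $\widetilde{S}_{U_\infty}$, i.e., you must descend a containment $\bD(b)\supseteq \bD(a)$ from the colimit to a finite stage. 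For $\Spec$ this is exactly what \cref{lem: submonoid of open set} supplies via Krull's theorem; the paper pointedly has no algebraic description of $\widetilde{S}_a$, and the image of $\Sp A_\infty\to \Sp A_n$ need not meet every relevant prime kernel, so this step is genuinely nontrivial. As it stands your proposal is a correct reduction to two lemmas, not a proof of the proposition.
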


Notice that for a global ring~$A$, the presheaf $\mathcal{L}_A$ in \eqref{eq: presheaf LA for Sp A} is not in principle a sheaf on the basis of principal open sets in~$\Sp A$.
Rather, the definition only implies that it verifies the sheaf property for the coverings \textit{of the total space}, as in the special case of \cref{rem: exactness for Sp sheaf in the global case}.

\section{Universal valuations}\label{sec:universalGval}

We now study a generalization of the definition of valuations to the context of semirings, taken from \cite[2.5]{Giansiracusa2X_2016}.
For further reading, the moduli space of such objects is studied in \cite{Giansiracusa2x2022} and \cite{Macpherson2020}.

We will see in \cref{thm: G-valuations induce homeomorphism sp spec} that these general valuations give a link between the spectrum of a semiring and the $k$-spectrum of a certain idempotent semiring associated to it, giving a distinguished sheaf on the latter.

\subsection{$G$-valuations on semirings}\label{subs: G-valuation}

Recall that any idempotent semiring $S$ is a partially ordered set with respect to the order defined by $s\preceq t$ whenever $s+t=t$.

\begin{definition}\label{def: G-valuation}
A \emph{$G$-valuation} on a semiring $A$ with values in an idempotent semiring $S$ is a map $v\colon A\rightarrow S$ satisfying:
\begin{itemize}
    \item[(1)] $v(0_A)=0_S$ and $v(1_A)=1_S$;
    \item[(2)] $v(ab)=v(a)v(b)$;
    \item[(3)] $v(a+b)\preceq v(a)+v(b)$.
\end{itemize}
We will denote by $\mathrm{Val}(A,S)$ the set of $G$-valuations in $A$ with values in~$S$.
\end{definition}

\begin{remark}
    With the above conventions, a $G$-valuation on a field $K$ with values in the tropical semifield $\mathbb{T}$ is nothing else than a usual non-Archimedean valuation on~$K$.
    Indeed, let $v\colon K \to \mathbb{T}$ be a~$G$-valuation.
    Then the third axiom reads $v(a+b)\preceq\min(v(a),v(b))$ and is equivalent to $$\min(v(a+b),v(a),v(b)) = \min(v(a),v(b))$$ and thus to $v(a+b) \geq \min(v(a),v(b))$ which yields the usual property of valuations. 
    
\end{remark}

Note that the kernel $\ker(v)=v^{-1}(0_S)$ of a $G$-valuation is an ideal of~$A$, but it is not in general a $k$-ideal, as $v$ is not necessarily a semiring homomorphism.
For example, the map $v\colon\mathbb{N}\to\mathbb{B}$ which sends all non-one natural numbers  to $0$ is a $G$-valuation, and its kernel is $\mathbb{N}\setminus\{1\}$, which is not a subtractive ideal by \cref{rem: relation between kernel and quotient}.

Given a $G$-valuation~$v\colon A\rightarrow S$, we set
\[
A_v=\{a\in A\mid v(a)\preceq 1_S\}.
\]
We see that $A_v$ is a subsemiring of~$A$.
It generalizes the notion of the valuation ring associated to a non-Archimedean valuation.

\begin{remark}\label{rem: G-valuation and Booleans}
    The $G$-valuations into the Boolean semiring $v\colon A\rightarrow \mathbb{B}$ are in a bijective correspondence with the prime ideals of~$A$.
    Given  $v\in \mathrm{Val}(A,\mathbb{B})$, its kernel is a prime ideal and thus $\ker(v)\in \mathrm{Spec} A$.
    The inverse assignment is obtained by mapping $\mathfrak{p}\in  \Spec A$ to the characteristic function $\chi_\mathfrak{p}$ defined by
    \[
    \chi_\mathfrak{p}(a) = \begin{cases}
    0 & \text{if } a \in \mathfrak{p},\\[4pt]
    1 & \text{if } a \notin \mathfrak{p}.
    \end{cases}
    \]

    Recall from \cref{lem: Sp of idempotent semiring} that if $A$ is idempotent, the same map restricts to a bijection between $\Sp A$ and $\operatorname{Hom} (A,\mathbb{B})\subseteq\mathrm{Val}(A,\mathbb{B})$.
\end{remark}

The following statement relates $G$-valuations with spectra.

\begin{proposition}\label{prop: valuation induces morphism of spectra}
Let $A$ be a semiring, $S$ an idempotent semiring, and $v\colon A\to S$ a $G$-valuation.
Then $v$ induces a continuous map
\[
v^*\colon \Sp S\longrightarrow \Spec A,
\qquad 
\mathfrak{p}\longmapsto v^{-1}(\mathfrak{p}).
\]
\end{proposition}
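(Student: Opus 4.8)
The plan is to check two things: that $v^{-1}(\mathfrak{p})$ is a prime ideal of $A$ for every $\mathfrak{p}\in\Sp S$, so that $v^*$ is well-defined as a map of sets, and then that $v^*$ is continuous.

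First I would verify the ideal axioms for $I := v^{-1}(\mathfrak{p})$. Membership of $0_A$ is immediate from axiom~(1) of \cref{def: G-valuation}, since $v(0_A)=0_S\in\mathfrak{p}$, and closure under multiplication by elements of $A$ follows from axiom~(2) together with $\mathfrak{p}$ being an ideal: if $v(a)\in\mathfrak{p}$ then $v(ra)=v(r)v(a)\in\mathfrak{p}$. The only subtle point — and the place where subtractivity of $\mathfrak{p}$ is genuinely used — is closure under addition. Given $a,b\in I$, axiom~(3) gives $v(a+b)\preceq v(a)+v(b)$, which in the idempotent semiring $S$ means exactly $v(a+b)+\big(v(a)+v(b)\big)=v(a)+v(b)$; since $v(a)+v(b)\in\mathfrak{p}$, this puts $v(a+b)$ in the situation of \cref{def: substractive ideal}, so $v(a+b)\in\mathfrak{p}$ and hence $a+b\in I$. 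For a prime ideal of $S$ that fails to be subtractive this argument breaks down, which is precisely why the source of $v^*$ is $\Sp S$ and not $\Spec S$ (compare the remark preceding the statement).

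Next I would check primality of $I$: it is proper because $v(1_A)=1_S\notin\mathfrak{p}$ (a prime ideal of $S$ is proper, hence does not contain $1_S$), and if $ab\in I$ then $v(a)v(b)=v(ab)\in\mathfrak{p}$, so primality of $\mathfrak{p}$ forces $v(a)\in\mathfrak{p}$ or $v(b)\in\mathfrak{p}$, that is $a\in I$ or $b\in I$. This shows $v^*\colon\Sp S\to\Spec A$ is well-defined.

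For continuity it suffices to examine preimages of the basic opens $D(a)$, $a\in A$, from \cref{lem: principal open sets are basis for Zariski topology of Spec}. A direct computation gives
\[
(v^*)^{-1}(D(a))=\{\mathfrak{p}\in\Sp S\mid a\notin v^{-1}(\mathfrak{p})\}=\{\mathfrak{p}\in\Sp S\mid v(a)\notin\mathfrak{p}\}=\bD(v(a)),
\]
which is open in $\Sp S$; equivalently, for any subset $T\subseteq A$ one has $(v^*)^{-1}(V(T))=V(v(T))\cap\Sp S$. The only real obstacle is the addition step above; everything else is a routine transcription of the corresponding ring-theoretic argument, with the valuation axioms~(1)--(3) standing in for the homomorphism property.
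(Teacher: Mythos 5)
Your proof is correct. The continuity half is exactly the paper's argument: compute $(v^*)^{-1}(D(a))=\bD(v(a))$ and invoke the fact that principal opens form a basis. Where you diverge is in the well-definedness: the paper identifies $\Sp S$ with $\Hom(S,\mathbb{B})$ (via \cref{lem: Sp of idempotent semiring}) and $\Spec A$ with $\mathrm{Val}(A,\mathbb{B})$ (via \cref{rem: G-valuation and Booleans}), so that $v^*$ becomes precomposition $f\mapsto f\circ v$, and then "it is easily checked" that composing a semiring homomorphism $S\to\mathbb{B}$ with a $G$-valuation yields a $G$-valuation; you instead verify directly that $v^{-1}(\mathfrak{p})$ is a prime ideal. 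The two routes are equivalent in substance --- in the paper's version the subtractivity of $\mathfrak{p}$ is consumed inside \cref{lem: Sp of idempotent semiring} to make $\chi_{\mathfrak{p}}$ additive, whereas you use it explicitly (via the down-closedness reformulation of \cref{rmk: Kernels for idempotent semirings}) to get $v(a+b)\in\mathfrak{p}$ from $v(a+b)\preceq v(a)+v(b)$. Your direct check is arguably more self-contained and makes transparent exactly where $\Sp$ rather than $\Spec$ is needed, which the paper leaves implicit; the paper's route buys brevity and sets up the functorial point of view used again in \cref{thm: G-valuations induce homeomorphism sp spec}. One tiny quibble: your parenthetical identity $(v^*)^{-1}(V(T))=V(v(T))\cap\Sp S$ is correct but redundant given the basis argument; either one alone suffices.
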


\begin{proof}
Under the identifications of \cref{rem: G-valuation and Booleans} and \cref{lem: Sp of idempotent semiring}, $v^*$ corresponds to the map $\mathrm{Hom} (S,\mathbb{B}) \rightarrow \mathrm{Val} (A,\mathbb{B})$ given by~$f\mapsto f\circ v$.
Hence, it is easily checked that $v^*$ is well-defined.

Moreover, for any $a\in A$ we have
\[
\begin{aligned}
(v^*)^{-1}(D(a))
&=\{\mathfrak{p}\in\Sp S\mid v^*(\mathfrak{p})\in D(a)\}
=\{\mathfrak{p}\in\Sp S\mid a\notin v^{-1}(\mathfrak{p})\}\\
&=\{\mathfrak{p}\in\Sp S\mid v(a)\notin\mathfrak{p}\}
=\bD(v(a)).
\end{aligned}
\]
Therefore, the inverse image of a principal open set in $\Spec A$ is open in~$\Sp S$.
As the former are a basis for the Zariski topology of $\Spec A$ by \cref{lem: principal open sets are basis for Zariski topology of Spec}, the map $v^*$ is continuous. 
\end{proof}

\subsection{The universal $G$-valuation}\label{subs: universal G-valuation}

Let us fix for this subsection the choice of a semiring $R$ and of an \emph{$R$-semialgebra}~$A$, that is a semiring equipped with a homomorphism~$\iota\colon R\rightarrow A$.
For example, any semiring has a unique structure of a $\mathbb{N}$-algebra, and any idempotent semiring that of a $\mathbb{B}$-algebra. 

We consider the set
\[M_R(A)\]
of finitely generated $R$-subsemimodules of~$A$.
It is an idempotent semiring with $M+N$ being the sum as $R$-semimodules and  $M\cdot N=\langle ab \mid a\in M , b\in N \rangle $ for any~$M,N\in M_R(A)$.
The identity with respect to addition is $\langle0_A\rangle=\{0_A\}$ while the one with respect to multiplication is~$\langle 1_A\rangle_R=\iota(R)$.

\begin{example}
    When~$A=R$, we get the idempotent semiring of finitely generated ideals of~$A$, with the usual sum and product of ideals; it was studied already by Dedekind in the case of commutative rings, see \cite[Example 1.4]{Golan1999}.
    The module version of this construction (for commutative rings) was also considered in \cite{BarilBoudreau_Garay}.
\end{example}

Note that the partial order on the idempotent semiring $M_R(A)$ is just~$\subseteq$, as for two submodules $M,N\in M_R(A)$ we have that $M+N=N$ if and only if~$M\subseteq N$. 
Moreover, any morphism of $R$-algebras $f\colon A \to B$ induces a morphism of semirings
\begin{equation}\label{eq: induced morphism between semirings of fg subsemimodules}
    M_R(f)\colon M_R(A)\longrightarrow M_R(B)
\end{equation}
defined by mapping the finitely generated $R$-subsemimodule $\langle a_1,\ldots,a_n\rangle_R$ of $A$ to~$\langle f(a_1),\ldots,f(a_n)\rangle_R$. 

Let us now consider the map
\begin{equation}\label{eq: definition of universal valuation}
    v_R\colon A \longrightarrow M_R(A),\qquad a\longmapsto\langle a\rangle_R.
\end{equation}
It is related to the previous subsection by the following observation.

\begin{lemma}\label{lem: universal G-valuation is G-valuation} 
The map $v_R$ is a $G$-valuation on $A$ with~$\iota(R)\subseteq A_{v_R}$.
\end{lemma}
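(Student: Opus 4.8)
The plan is to verify the three defining axioms of a $G$-valuation from \cref{def: G-valuation} for the map $v_R\colon A\to M_R(A)$, $a\mapsto\langle a\rangle_R$, and then separately check the containment $\iota(R)\subseteq A_{v_R}$. All three verifications are short computations with finitely generated $R$-subsemimodules, using the explicit description of the semiring operations on $M_R(A)$ recalled just before the statement.

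First I would check axiom (1). By definition $0_{M_R(A)}=\langle 0_A\rangle_R=\{0_A\}$, and $\langle 0_A\rangle_R$ is exactly the principal $R$-subsemimodule generated by $0_A$, so $v_R(0_A)=\langle 0_A\rangle_R=0_{M_R(A)}$. Similarly $1_{M_R(A)}=\langle 1_A\rangle_R=\iota(R)$, and $\langle 1_A\rangle_R$ is the $R$-subsemimodule generated by $1_A$, which is $\{\iota(r)\cdot 1_A\mid r\in R\}=\iota(R)$; hence $v_R(1_A)=1_{M_R(A)}$. Next, for axiom (2), multiplicativity, I would observe that $\langle a\rangle_R\cdot\langle b\rangle_R=\langle xy\mid x\in\langle a\rangle_R,\ y\in\langle b\rangle_R\rangle_R$; since every $x$ is of the form $\iota(r)a$ and every $y$ of the form $\iota(s)b$, every product $xy$ equals $\iota(rs)(ab)$, so the module they generate is exactly $\langle ab\rangle_R$. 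Thus $v_R(a)v_R(b)=\langle a\rangle_R\langle b\rangle_R=\langle ab\rangle_R=v_R(ab)$. For axiom (3), recalling from the text that the partial order on $M_R(A)$ is just inclusion, I would note that $a+b\in\langle a,b\rangle_R=\langle a\rangle_R+\langle b\rangle_R$, hence $\langle a+b\rangle_R\subseteq\langle a\rangle_R+\langle b\rangle_R$, i.e.\ $v_R(a+b)\preceq v_R(a)+v_R(b)$.

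Finally, for the containment $\iota(R)\subseteq A_{v_R}$: by definition $A_{v_R}=\{a\in A\mid v_R(a)\preceq 1_{M_R(A)}\}=\{a\in A\mid \langle a\rangle_R\subseteq\iota(R)\}$. For any $r\in R$ we have $\iota(r)=\iota(r)\cdot 1_A$, so $\langle\iota(r)\rangle_R=\{\iota(s)\iota(r)\mid s\in R\}=\{\iota(sr)\mid s\in R\}\subseteq\iota(R)$; hence $\iota(r)\in A_{v_R}$, as desired.

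I do not expect any genuine obstacle here: the statement is essentially a bookkeeping check once the semiring structure on $M_R(A)$ is unwound. The only point requiring a moment's care is making sure that principal $R$-subsemimodules behave as expected under the operations — in particular that $\langle a\rangle_R\cdot\langle b\rangle_R$ really collapses to the principal module $\langle ab\rangle_R$ rather than merely containing it — but this is immediate from the fact that $\langle a\rangle_R=\iota(R)\cdot a$ and $\iota(R)$ is closed under multiplication.
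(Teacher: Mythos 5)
Your proof is correct and follows essentially the same route as the paper's: a direct verification of the three axioms of \cref{def: G-valuation} using the explicit semiring structure on $M_R(A)$ and the fact that the partial order is inclusion, followed by the observation that $\langle\iota(r)\rangle_R\subseteq\iota(R)$. The only difference is that you spell out the multiplicativity step $\langle a\rangle_R\cdot\langle b\rangle_R=\langle ab\rangle_R$, which the paper asserts without detail.
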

\begin{proof}
    We have $v_R(0_A)=\{0_A\}$, which is the zero element of~$M_R(A)$, and $v_R(1_A)=\iota(R)$, which is the multiplicative unit element of~$M_R(A)$.
    For all $a,b\in A$ we have
    \[
    v_R(a)v_R(b)=\langle a\rangle_R\cdot\langle b\rangle_R=\langle ab\rangle_R=v_R(ab).
    \]
    Further,
    \[
    v_R(a+b)=\langle a+b\rangle_R \subseteq\langle a\rangle_R+\langle b\rangle_R = v_R(a) + v_R(b).
    \]
    Since the partial order on $M_R(A)$ is precisely~$\subseteq$, we conclude that $v_R$ satisfies all axioms in \cref{def: G-valuation}.
    Finally, for all $r\in R$ we have that $v_R(\iota(r))=\langle\iota(r)\rangle_R\subseteq\iota(R)$, so that $v_R(\iota(r))\preceq 1_{M_R(A)}$, as wanted.
\end{proof}

We call the map $v_R$ in \eqref{eq: definition of universal valuation} the \emph{universal $G$-valuation} of the $R$-semi\-algebra~$A$.
The terminology is justified by the following result, which was given by MacPherson in the case of rings in \cite[Example~3.16]{Macpherson2020} and generalized to semirings by the last author in \cite{Xarles}.
In the specific case~$R=\mathbb{Z}$, it gives an explicit description of the universal object considered in \cite[Proposition~2.5.4]{Giansiracusa2X_2016}. 

\begin{theorem}\label{thm: universal G-valuation satisfies a universal property}
    For every $G$-valuation $v\colon A\rightarrow S$ such that~$\iota(R)\subseteq A_{v}$, there exists a unique morphism of semirings $f\colon M_R(A) \rightarrow S$ such that~$v=f\circ v_R$.
\end{theorem}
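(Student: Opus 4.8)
The plan is to construct $f$ by hand on generators and then verify the things that need checking. Given a $G$-valuation $v\colon A\to S$ with $\iota(R)\subseteq A_{v}$, I would define, for every finitely generated $R$-subsemimodule of $A$ written as $M=\langle a_1,\ldots,a_n\rangle_R$ with $n\ge 1$ (any such submodule admits a finite nonempty generating tuple),
\[
f(M)=v(a_1)+\cdots+v(a_n)\in S .
\]
With this definition the identity $v=f\circ v_R$ is immediate, since $f(v_R(a))=f(\langle a\rangle_R)=v(a)$; so the real work is (a) well-definedness, (b) that $f$ is a homomorphism of semirings, and (c) uniqueness. I expect (a) to be the heart of the matter and essentially the only place where care is needed.

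For well-definedness the key lemma I would isolate is a monotonicity statement: if $\langle a_1,\ldots,a_n\rangle_R\subseteq\langle b_1,\ldots,b_m\rangle_R$, then $\sum_i v(a_i)\preceq\sum_j v(b_j)$ in $S$. To prove it, write each $a_i=\sum_j\iota(r_{ij})b_j$ for suitable $r_{ij}\in R$; iterating axiom~(3) of \cref{def: G-valuation} and then applying axiom~(2) gives $v(a_i)\preceq\sum_j v(\iota(r_{ij}))\,v(b_j)$, and since $v(\iota(r_{ij}))\preceq 1_S$ by the hypothesis $\iota(R)\subseteq A_{v}$ and multiplication is order-preserving for $\preceq$ in any idempotent semiring, each summand is $\preceq v(b_j)$, whence $v(a_i)\preceq\sum_j v(b_j)$; summing over $i$ (a finite sum of elements each $\preceq t$ is again $\preceq t$) yields the claim. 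Since $\preceq$ is a genuine partial order, applying the lemma to both inclusions shows that $f(M)$ depends only on $M$ and not on the chosen generating tuple.

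That $f$ is a semiring homomorphism is then formal. The multiplicative unit of $M_R(A)$ is $\iota(R)=\langle 1_A\rangle_R$, mapped to $v(1_A)=1_S$, and the zero element is $\{0_A\}=\langle 0_A\rangle_R$, mapped to $v(0_A)=0_S$. Additivity follows from $\langle a_1,\ldots,a_n\rangle_R+\langle b_1,\ldots,b_m\rangle_R=\langle a_1,\ldots,a_n,b_1,\ldots,b_m\rangle_R$. For multiplicativity one uses that $\langle a_1,\ldots,a_n\rangle_R\cdot\langle b_1,\ldots,b_m\rangle_R=\langle a_ib_j\mid 1\le i\le n,\ 1\le j\le m\rangle_R$, so that its image under $f$ is $\sum_{i,j}v(a_ib_j)=\sum_{i,j}v(a_i)v(b_j)=\big(\sum_i v(a_i)\big)\big(\sum_j v(b_j)\big)$ by axiom~(2) and distributivity in $S$. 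For uniqueness, if $g\colon M_R(A)\to S$ is any semiring morphism with $v=g\circ v_R$, then $g(\langle a\rangle_R)=v(a)$ for every $a\in A$, and since $\langle a_1,\ldots,a_n\rangle_R=\langle a_1\rangle_R+\cdots+\langle a_n\rangle_R$ in $M_R(A)$, additivity of $g$ forces $g=f$.

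The main obstacle, such as it is, lies in the well-definedness step: this is where all three axioms of a $G$-valuation and the hypothesis $\iota(R)\subseteq A_{v}$ are simultaneously used, and where one must handle the order $\preceq$ carefully (antisymmetry to deduce an equality from two inequalities, order-preservation of $+$ and $\cdot$, and the iterated form of subadditivity). Everything after that reduces to bookkeeping with generating tuples.
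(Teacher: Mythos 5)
Your proposal is correct and follows essentially the same route as the paper: the same formula $f(M)=\sum_i v(a_i)$, the same well-definedness argument via writing each $a_i$ as an $R$-combination of the $b_j$ and using $\iota(R)\subseteq A_v$ together with the compatibility of $\preceq$ with sums and products, and the same uniqueness argument from $M=\langle a_1\rangle_R+\cdots+\langle a_n\rangle_R$. The only difference is that you spell out the homomorphism verification, which the paper leaves as "easy to see."
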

\begin{proof}
    We define $f$ by taking, for each $M=\langle a_1,\dots,a_n\rangle_R\in M_R(A)$,
    \[
    f(M)=\sum_{i=1}^n v(a_i).
    \]
    We first check that $f$ is well-defined.
    For this, suppose $M=\langle a_1,\dots,a_n\rangle_R=\langle b_1,\dots,b_m\rangle_R$.
    Then, each $a_i$ can be written as a finite sum $a_i=\sum_{j=1}^m r_{ij}b_j$ with~$r_{ij}\in R$.
    Hence
    \begin{equation}\label{eq: sum in universal G-valuation satisfies a universal property}
    v(a_i)=v\Bigl(\sum_j r_{ij} b_j\Bigr)
    \preceq \sum_j v(r_{ij}b_j)
    \preceq \sum_j v(\iota(r_{ij}))\,v(b_j)
    \preceq \sum_j v(b_j),
    \end{equation}
    since $\iota(R)\subseteq A_v$ implies that~$v(\iota(r_{ij}))\preceq1_S$, and the partial order on $S$ respects sums and products.
    For the same reason, summing \eqref{eq: sum in universal G-valuation satisfies a universal property} over $i=1,\ldots,n$ yields $\sum_i v(a_i)\preceq \sum_j v(b_j)$.
    The same argument with the roles interchanged gives the reverse relation, and so the desired equality.
    
    It is easy to see that $f$ is a semiring morphism and by definition we have
    \[
    (f\circ v_R)(a)=f(\langle a\rangle_R)=v(a)
    \]
    for all~$a \in A$, proving that~$v=f\circ v_R$.
    Uniqueness is obvious from this last requirement and from the fact that each $M=\langle a_1,\dots,a_n\rangle_R\in M_R(A)$ can be written as~$M=\langle a_1\rangle_R+\ldots+\langle a_n\rangle_R$.
\end{proof}

\begin{example}
    When $R=\mathbb{N}$ we get a universal object for semirings, when $R = \mathbb{B}$ a universal object for idempotent semirings, and when $R=A$ a universal object for finitely generated ideals of $A$ with minimal $1$-bounded valuation.
\end{example}

The following key result topologically identifies the spectrum of $A$ and the $k$-spectrum of the associated idempotent semiring~$M_R(A)$ through the use of the universal $G$-valuation of~$A$.
In the case of rings and ideals, this relation was established in \cite[Proposition~3.10]{BarilBoudreau_Garay}. 

\begin{theorem}\label{thm: G-valuations induce homeomorphism sp spec}
For any $R$-semialgebra $A$ the morphism
\[
v_R^*\colon\Sp M_R(A)\longrightarrow \Spec A
\]
is a homeomorphism. 
\end{theorem}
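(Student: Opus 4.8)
The plan is to prove that $v_R^*$ is a homeomorphism by showing it is a continuous open bijection. Continuity and well-definedness are already available from \cref{prop: valuation induces morphism of spectra}, whose proof also records the key identity $(v_R^*)^{-1}(D(a))=\bD(v_R(a))$ for every $a\in A$; this will be used repeatedly.

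For bijectivity I would pass to the Boolean descriptions of both spaces. Since $M_R(A)$ is idempotent, \cref{lem: Sp of idempotent semiring} identifies $\Sp M_R(A)$ with $\Hom(M_R(A),\mathbb{B})$, while \cref{rem: G-valuation and Booleans} identifies $\Spec A$ with $\mathrm{Val}(A,\mathbb{B})$. Under these identifications $v_R^*$ becomes the assignment $g\mapsto g\circ v_R$ — the short check is that $h_{\mathfrak{p}}\circ v_R$ is the characteristic function of $v_R^{-1}(\mathfrak{p})$, where $h_{\mathfrak{p}}$ denotes the characteristic function of $\mathfrak{p}$. Then I would invoke the universal property of $v_R$, \cref{thm: universal G-valuation satisfies a universal property}: it says precisely that every $G$-valuation $v\colon A\to\mathbb{B}$ with $\iota(R)\subseteq A_v$ factors uniquely through $v_R$. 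The crucial (small) observation is that the condition $\iota(R)\subseteq A_v$ is automatic here, because $1_{\mathbb{B}}$ is the top element of $\mathbb{B}$, so $v(x)\preceq 1_{\mathbb{B}}$ for all $x$, i.e.\ $A_v=A$. Hence the map $g\mapsto g\circ v_R$ is surjective (existence of the factorization) and injective (uniqueness), i.e.\ a bijection, and therefore so is $v_R^*$.

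It then remains to show $v_R^*$ is open, and here I would work with the basis $\{\bD(M)\}_{M\in M_R(A)}$ of $\Sp M_R(A)$ from \cref{lem: principal open sets are basis for Zariski topology of Sp}. Writing $M=\langle a_1,\dots,a_n\rangle_R=v_R(a_1)+\dots+v_R(a_n)$ in $M_R(A)$, I would use that a prime subtractive ideal $\mathfrak{p}$ of the idempotent semiring $M_R(A)$ is down-closed (\cref{rmk: Kernels for idempotent semirings}) and closed under addition to conclude that $M\in\mathfrak{p}\iff v_R(a_i)\in\mathfrak{p}$ for all $i$; hence $\bD(M)=\bigcup_i \bD(v_R(a_i))=(v_R^*)^{-1}\big(\bigcup_i D(a_i)\big)$, and by bijectivity $v_R^*(\bD(M))=\bigcup_i D(a_i)$, which is open in $\Spec A$. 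A continuous open bijection is a homeomorphism, finishing the argument.

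I do not expect a serious obstacle: once the earlier results are granted, everything is short. The two places that need a little care are (i) noticing that the hypothesis $\iota(R)\subseteq A_v$ of \cref{thm: universal G-valuation satisfies a universal property} is vacuous for $\mathbb{B}$-valued $G$-valuations — this is what upgrades ``injective'' to ``bijective'' — and (ii) the equivalence $M\in\mathfrak{p}\iff$ all $v_R(a_i)\in\mathfrak{p}$, which genuinely uses both the order-ideal and the additive-monoid structure of a prime kernel. As an alternative to the Boolean detour, one could instead exhibit the inverse of $v_R^*$ directly, sending $\mathfrak{q}\in\Spec A$ to the collection of finitely generated $R$-subsemimodules of $A$ whose generators all lie in $\mathfrak{q}$, and check by hand that this is a prime subtractive ideal and a two-sided inverse of $v_R^*$; this is more elementary but longer.
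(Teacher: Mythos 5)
Your argument is correct, and it splits the proof the same way the paper does for two of the three pieces: continuity comes from \cref{prop: valuation induces morphism of spectra}, and openness comes from the basis $\{\bD(\langle a\rangle_R)\}$ together with the identity $v_R^*(\bD(\langle a\rangle_R))=D(a)$ (your inline equivalence ``$M\in\mathfrak p\iff v_R(a_i)\in\mathfrak p$ for all $i$'', proved from down-closedness and additivity of a prime kernel, is exactly what the paper isolates as \cref{lem: D for finitely generated submodules}). The bijectivity step, however, goes by a genuinely different route. The paper constructs the inverse explicitly, sending $\mathfrak q\in\Spec A$ to $\mathfrak p_{\mathfrak q}=\{M\in M_R(A)\mid M\subseteq\mathfrak q\}$, and verifies both compositions by hand; you instead pass to the Boolean pictures $\Sp M_R(A)\cong\Hom(M_R(A),\mathbb{B})$ and $\Spec A\cong\mathrm{Val}(A,\mathbb{B})$ and read bijectivity off the universal property \cref{thm: universal G-valuation satisfies a universal property}, correctly observing that the hypothesis $\iota(R)\subseteq A_v$ is vacuous because $1_{\mathbb{B}}$ is the top element of $\mathbb{B}$. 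This is precisely the alternative the paper records in the remark immediately following the theorem, so your proof is legitimate and anticipated. What your route buys is conceptual economy: bijectivity becomes a formal consequence of universality with no element-chasing. What the paper's route buys is an explicit formula for the inverse map (which you also sketch as your longer alternative), something that is convenient to have on hand when working with the homeomorphism later. Note that neither route lets you skip the kernel-structure argument entirely: the openness step still requires it.
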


In the proof of \cref{thm: G-valuations induce homeomorphism sp spec}, we will use the observation that the topology of $\Sp M_R(A)$ is generated by a much smaller class than the one of all principal open sets.
Namely, it is sufficient to take those defined by $R$-subsemimodules of~$A$ generated by a single element.
The precise result is as follows.

\begin{lemma}\label{lem: D for finitely generated submodules}
For all~$a_1,\ldots,a_n\in A$, we have
\[
\bD(\langle a_1,\ldots,a_n\rangle_R)=\bigcup_{i=1}^n\bD(\langle a_i\rangle_R).
\]
In particular, the family $\big\{\bD(\langle a\rangle_R)\mid a\in A\big\}$ is a basis for the Zariski topology of~$\Sp M_R(A)$.
\end{lemma}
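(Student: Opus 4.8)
The plan is to reduce everything to two facts already established: that $M_R(A)$ is an idempotent semiring, and that the principal open sets of a $k$-spectrum form a basis for its Zariski topology. First I would record the purely algebraic identity
\[
\langle a_1,\ldots,a_n\rangle_R=\langle a_1\rangle_R+\cdots+\langle a_n\rangle_R
\]
inside $M_R(A)$, which is immediate from the definition of the addition on $M_R(A)$ as the sum of $R$-subsemimodules together with the description of the $R$-subsemimodule generated by $a_1,\ldots,a_n$.

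Next, since $M_R(A)$ is idempotent, \cref{lem: principal opens for idempotents} applies to it and gives $\bD(M)\cup\bD(N)=\bD(M+N)$ for all $M,N\in M_R(A)$. Combining this with the identity above and inducting on $n$ yields
\[
\bD(\langle a_1,\ldots,a_n\rangle_R)=\bD\Bigl(\textstyle\sum_{i=1}^{n}\langle a_i\rangle_R\Bigr)=\bigcup_{i=1}^n\bD(\langle a_i\rangle_R),
\]
which is the first assertion.

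For the \emph{in particular} part, I would invoke \cref{lem: principal open sets are basis for Zariski topology of Sp} applied to the semiring $M_R(A)$, according to which $\{\bD(M)\mid M\in M_R(A)\}$ is a basis for the Zariski topology of $\Sp M_R(A)$. Since every element $M$ of $M_R(A)$ is by definition finitely generated, say $M=\langle a_1,\ldots,a_n\rangle_R$, the first part rewrites each basic open $\bD(M)$ as a finite union of sets of the form $\bD(\langle a\rangle_R)$. Hence the smaller family $\{\bD(\langle a\rangle_R)\mid a\in A\}$ generates the same topology, i.e. is itself a basis.

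The argument is essentially bookkeeping; the only point needing a moment's care is that for an idempotent semiring the containment $\bD(a+b)\supseteq\bD(a)\cup\bD(b)$ of \cref{lem: principal open sets are basis for Zariski topology of Sp} is in fact an equality (\cref{lem: principal opens for idempotents}), so that passing from the $n$-generated submodule to its cyclic summands loses no open sets. I do not anticipate any genuine obstacle.
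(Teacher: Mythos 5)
Your proof is correct and amounts to the same argument as the paper's. The paper does not cite \cref{lem: principal opens for idempotents} explicitly but instead re-derives its content inline: it passes through the characteristic function $\chi_{\mathfrak p}\colon M_R(A)\to\mathbb{B}$ from \cref{rem: G-valuation and Booleans}, uses that $\chi_{\mathfrak p}$ is a semiring homomorphism (by the proof of \cref{lem: Sp of idempotent semiring}) and that $\langle a_1,\ldots,a_n\rangle_R=\langle a_1\rangle_R+\cdots+\langle a_n\rangle_R$, and then reads off the union of the $\bD(\langle a_i\rangle_R)$ from the equality $\chi_{\mathfrak p}(\langle a_1,\ldots,a_n\rangle_R)=\sum_i\chi_{\mathfrak p}(\langle a_i\rangle_R)=1$. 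Your route, quoting \cref{lem: principal opens for idempotents} directly and inducting, is a slightly more economical organization of the same idea, and your handling of the \emph{in particular} clause (every element of $M_R(A)$ is finitely generated, so the general basic opens decompose into finite unions of cyclic ones) matches the paper's implicit reasoning.
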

\begin{proof}
    The second claim is immediate from \cref{lem: principal open sets are basis for Zariski topology of Sp} and from the first one.
    To prove the latter, note that a prime $k$-ideal $\mathfrak{p}$ of $M_R(A)$ satisfies $\mathfrak{p}\in \bD(\langle a_1,\ldots,a_n\rangle_R)$ if and only if the associated function $\chi_\mathfrak{p}\colon M_R(A)\rightarrow \mathbb{B}$ from \cref{rem: G-valuation and Booleans} is such that~$\chi_\mathfrak{p}(\langle a_1,\ldots,a_n\rangle_R)=1$.
    Since $\chi_\mathfrak{p}$ is a homomorphism of semirings because of the proof of \cref{lem: Sp of idempotent semiring} and $\langle a_1,\ldots,a_n\rangle_R=\langle a_1 \rangle_R+\ldots+ \langle a_n \rangle_R$, we deduce that the previous equality is equivalent to
    \[
    1=
    \chi_\mathfrak{p}(\langle a_1,\ldots,a_n\rangle_R)
    =
    \sum_{i=1}^n \chi_\mathfrak{p}(\langle a_i\rangle_R).
    \]
    This condition is equivalent to saying that there exists some $i=1,\ldots,n$ such that $\chi_\mathfrak{p}(\langle a_i\rangle_R)=1$.
    This happens precisely when~$\langle a_i\rangle_R\notin \mathfrak{p}$, that is when $\mathfrak{p}\in\bD(\langle a_i\rangle_R)$. 
\end{proof}

\begin{proof}[Proof of~\cref{thm: G-valuations induce homeomorphism sp spec}]
    Applying \cref{prop: valuation induces morphism of spectra} and \cref{lem: universal G-valuation is G-valuation}, we already know that the map
    \[
    v_R^*\colon \Sp M_R(A)\longrightarrow \Spec A,
    \qquad 
    \mathfrak{p}\longmapsto v_R^{-1}(\mathfrak{p})
    \]
    is well-defined and continuous.
    To prove it is bijective, define for each $\mathfrak{q}\in\Spec A$ the set
    \[
    \mathfrak{p}_{\mathfrak q}=\{M\in M_R(A)\mid M\subseteq \mathfrak q\}.
    \]
    It is easily seen that $\mathfrak{p}_\mathfrak{q}$ is a subtractive prime ideal in $M_R(A)$ that satisfies $v_R^*(\mathfrak p_{\mathfrak q})=\mathfrak q$.
    Conversely, take~$\mathfrak{p}\in\Sp M_R(A)$.
    For all $M=\langle a_1,\ldots,a_n\rangle_R\in M_R(A)$ it holds that
    \[
    \mathfrak{p}\notin\bD(M)
    \iff
    \mathfrak{p}\notin\bD(\langle a_i\rangle_R)\ \forall i=1,\ldots,n
    \]
    thanks to \cref{lem: D for finitely generated submodules}.
    Thus $M\in\mathfrak{p}$ if and only if $\langle a_i\rangle_R\in\mathfrak{p}$ for all $i=1,\ldots,n$.
    But the last condition is equivalent to the fact that $\langle a\rangle_R\in\mathfrak{p}$ for all $a\in M$ since
    \[
    \langle a\rangle_R+\langle a_1\rangle_R+\ldots+\langle a_n\rangle_R=\langle a_1\rangle_R+\ldots+\langle a_n\rangle_R
    \]
    and $\mathfrak{p}$ is substractive.
    We deduce that $M\in\mathfrak{p}$ if and only if $v_R(a)\in\mathfrak{p}$ for all $a\in M$, and so
    \[
    \mathfrak{p}_{v_R^*(\mathfrak{p})}
    =
    \{M\in M_R(A)\mid v_R(M)\subseteq\mathfrak{p}\}
    =
    \mathfrak{p},
    \]
    concluding the proof of bijectivity.

    Finally, we need to check that $v_R^*$ is an open map.
    To do this, notice that for all $a\in A$ we have
    \[
    \mathfrak{p}\in \bD(\langle a\rangle_R)
    \iff
    v_R(a)\notin\mathfrak{p}
    \iff
    a\notin v_R^{-1}(\mathfrak{p})
    \iff
    v_R^*(\mathfrak{p})\in D(a).
    \]
    Therefore,
    \begin{equation}\label{eq: image of principal open by universal valuation}
    v_R^*(\bD(\langle a\rangle_R))
    =
    D(a),
    \end{equation}
    which is an open set in~$\Spec A$.
    By \cref{lem: D for finitely generated submodules}, this is enough to conclude that $v_R^*$ is an open map, finishing the proof.
\end{proof}

\begin{remark}
    Another way to look at the bijection in \cref{thm: G-valuations induce homeomorphism sp spec} is to recall that, with the same identifications as in the proof of \cref{prop: valuation induces morphism of spectra}, the map $v_R^*$ corresponds to
    \[
    \mathrm{Hom} (M_R(A),\mathbb{B}) \longrightarrow \mathrm{Val} (A,\mathbb{B}),\qquad f\longmapsto f\circ v_R.
    \]
    Since any $G$-valuation with values in $\mathbb{B}$ has integral image, the fact that such assignment is a bijection follows from the universal property in \cref{thm: universal G-valuation satisfies a universal property}.
\end{remark}

\subsection{A presheaf associated to the universal valuation}

Let us consider again a semiring $R$ and an $R$-semialgebra~$A$.
The homeomorphism in \cref{thm: G-valuations induce homeomorphism sp spec} suggests the definition of a distinguished presheaf $\mathcal{F}$ on~$\Sp M_R(A)$.
Namely, for any open set~$U\subseteq\Sp M_R(A)$, we can consider the corresponding open set $v_R^*(U)\subseteq\Spec A$ and define
\[
\mathcal{F}(U)=M_R(\mathcal{O}_{\Spec A}(v_R^*(U))).
\]
This is meaningful, as $\mathcal{O}_{\Spec A}(v_R^*(U))$ is naturally an $A$-semialgebra, and hence an~$R$-semialgebra.

\begin{remark}
    Notice that, even in the case~$A=R$, we are not considering the presheaf assigning to every open $U$ the idempotent semiring of finitely generated ideals in~$\mathcal{O}_{\Spec A}(v_R^*(U))$, as it is done in \cite[Section~4.2]{BarilBoudreau_Garay}, but the larger idempotent semiring of its finitely generated $A$-subsemimodules. 
\end{remark}

The presheaf $\mathcal{F}$ rejoices pleasant properties on the basis of the topology of $\Sp M_R(A)$ given in \cref{lem: D for finitely generated submodules}.
Indeed, because of \eqref{eq: image of principal open by universal valuation} and \cref{thm: equivalence of definitions} we have for all~$a\in A$ that
\[
\mathcal{F}(\bD(\langle a\rangle_R))\cong M_R(A[a^{-1}]).
\]
In particular, its global sections are, as desirable,
\[
\mathcal{F}(\Sp M_R(A))\cong M_R(A).
\]
Moreover, the semirings of local sections of $\mathcal{F}$ on these principal open sets are localizations of the semiring of global sections, thanks to the following.

\begin{lemma}
    For all $a\in A$ there is an isomorphism
    \[
    M_R(A[a^{-1}])\cong M_R(A)\big[v_R(a)^{-1}\big].
    \] 
\end{lemma}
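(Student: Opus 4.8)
The plan is to realize the isomorphism as the canonical map induced by the functoriality of $M_R$ applied to the localization homomorphism $\varphi_a\colon A\to A[a^{-1}]$. Write $j=M_R(\varphi_a)\colon M_R(A)\to M_R(A[a^{-1}])$ for the morphism of \eqref{eq: induced morphism between semirings of fg subsemimodules}. Since $v_R$ is multiplicative by \cref{lem: universal G-valuation is G-valuation}, we have $v_R(a)^n=\langle a^n\rangle_R$, and $j$ sends it to $\langle a^n/1\rangle_R$; the latter is invertible in $M_R(A[a^{-1}])$ with inverse $\langle a^{-n}/1\rangle_R$, because $\langle a^n/1\rangle_R\cdot\langle a^{-n}/1\rangle_R=\langle 1\rangle_R$. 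By the universal property of localization, $j$ therefore factors uniquely through a morphism $\Phi\colon M_R(A)\big[v_R(a)^{-1}\big]\to M_R(A[a^{-1}])$ with $\Phi\big(N/v_R(a)^n\big)=j(N)\cdot\langle a^{-n}/1\rangle_R$. The whole content of the lemma is then that $\Phi$ is bijective.

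Surjectivity should be routine: a finitely generated $R$-subsemimodule of $A[a^{-1}]$ is $\langle b_1/a^{m_1},\dots,b_k/a^{m_k}\rangle_R$, and after rescaling each generator to the common denominator $a^n$ with $n=\max_i m_i$ it takes the shape $\langle c_1/a^n,\dots,c_k/a^n\rangle_R=\langle a^{-n}/1\rangle_R\cdot j(\langle c_1,\dots,c_k\rangle_R)=\Phi\big(\langle c_1,\dots,c_k\rangle_R/v_R(a)^n\big)$.

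Injectivity is the substantial point, and the key reduction is the following claim: \emph{if $N_1,N_2\in M_R(A)$ satisfy $j(N_1)=j(N_2)$, then $v_R(a)^M N_1=v_R(a)^M N_2$ for some $M\ge 0$.} Granting this, an arbitrary equality $\Phi(N_1/v_R(a)^{n_1})=\Phi(N_2/v_R(a)^{n_2})$ is cleared of denominators by multiplying with $j(v_R(a)^{n_1+n_2})$, yielding $j(\langle a^{n_2}\rangle_R N_1)=j(\langle a^{n_1}\rangle_R N_2)$; the claim then gives some $M$ with $v_R(a)^{M+n_2}N_1=v_R(a)^{M+n_1}N_2$, which is precisely the relation making $N_1/v_R(a)^{n_1}$ and $N_2/v_R(a)^{n_2}$ equal in the localization. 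To prove the claim, write $N_1=\langle b_1,\dots,b_p\rangle_R$ and $N_2=\langle c_1,\dots,c_q\rangle_R$. From $j(N_1)\subseteq j(N_2)$ each generator $b_i/1$ lies in $\langle c_1/1,\dots,c_q/1\rangle_R$, so $b_i/1=(\textstyle\sum_j r_{ij}c_j)/1$ in $A[a^{-1}]$ for suitable $r_{ij}\in R$, hence by the definition of the localization there is $k_i\ge 0$ with $a^{k_i}b_i=a^{k_i}\sum_j r_{ij}c_j$. Setting $m=\max_i k_i$ and multiplying by $a^{m-k_i}$ gives $a^m b_i=\sum_j r_{ij}(a^m c_j)\in\langle a^m\rangle_R N_2$ for every $i$, so $\langle a^m\rangle_R N_1\subseteq\langle a^m\rangle_R N_2$; the symmetric inclusion $j(N_2)\subseteq j(N_1)$ yields $m'$ with $\langle a^{m'}\rangle_R N_2\subseteq\langle a^{m'}\rangle_R N_1$, and taking $M=m+m'$ and using associativity of multiplication together with its monotonicity (which holds in the idempotent semiring $M_R(A)$) gives $\langle a^M\rangle_R N_1=\langle a^M\rangle_R N_2$, i.e.\ $v_R(a)^M N_1=v_R(a)^M N_2$.

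The main obstacle I expect is exactly this injectivity step, or rather the bookkeeping inside it: in contrast with the ring case one cannot work with the ideal generated by $a$ but only with the one-generator $R$-subsemimodules $\langle a^k\rangle_R$, and these are not nested in general, so the equalities must first be brought to a common power of $a$ before the subsemimodules $\langle a^m\rangle_R N_i$ can be compared, and one must be careful that multiplication by $\langle a^{m'}\rangle_R$ is order-preserving, which is a consequence of the idempotency of $M_R(A)$. Everything else is a formal consequence of the universal property of localization and of the definitions of $M_R$ and $v_R$.
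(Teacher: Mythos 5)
Your proof is correct, but it takes a genuinely different route from the paper. Both proofs start by using the universal property of localization to produce the morphism $\Phi\colon M_R(A)\bigl[v_R(a)^{-1}\bigr]\to M_R(A[a^{-1}])$. The paper then constructs the inverse map abstractly: it shows that $b/a^n\mapsto v_R(b)/v_R(a)^n$ is a $1$-bounded $G$-valuation on $A[a^{-1}]$, and invokes the universal property of the universal $G$-valuation (\cref{thm: universal G-valuation satisfies a universal property}) to produce the morphism $M_R(A[a^{-1}])\to M_R(A)\bigl[v_R(a)^{-1}\bigr]$; it then argues that the two composites must be identities because both arrows arise from universal properties. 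You instead prove bijectivity of $\Phi$ by hand: surjectivity by clearing denominators, and injectivity via the key claim that $j(N_1)=j(N_2)$ forces $v_R(a)^M N_1=v_R(a)^M N_2$ for some $M$, which you establish by writing out generators, bringing the localization identities to a common power of $a$, and using monotonicity of multiplication in the idempotent semiring $M_R(A)$. The paper's proof is shorter and conceptually cleaner, reusing the adjunction-like property of $M_R$; yours is more elementary (no appeal to \cref{thm: universal G-valuation satisfies a universal property}) and makes the content of the isomorphism completely explicit at the level of elements, which also shows directly how the argument for localizations of rings extends despite the fact that $\langle a^k\rangle_R$ is only an $R$-subsemimodule and the $\langle a^k\rangle_R$ need not be nested. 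Both arguments are sound.
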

\begin{proof}
    Recall that the localization $A[a^{-1}]$ comes equipped with a natural homomorphism~$\varphi_a\colon A\to A[a^{-1}]$.
    As in \eqref{eq: induced morphism between semirings of fg subsemimodules} this induces a semiring homomorphism $M_R(\varphi_a)\colon M_R(A)\to M_R(A[a^{-1}])$ mapping $v_R(a)$ to
    \[
    M_R(\varphi_a)(v_R(a))
    =
    M_R(\varphi_a)(\langle a\rangle_R)
    =
    \langle \varphi_a(a)\rangle_R.
    \]
    This element is invertible in~$M_R(A[a^{-1}])$, with inverse $\langle 1_A/a\rangle_R$.
    Thus, the universal property of localization gives a unique morphism
    \[
    M_R(A)\big[v_R(a)^{-1}\big]\longrightarrow M_R(A[a^{-1}])
    \]
    recovering $M_R(\varphi_a)$ from the localization morphism.

    Conversely, it is easy to check with the help of \cref{lem: universal G-valuation is G-valuation} that the map $v\colon A[a^{-1}] \to M_R(A)\big[v(a)^{-1}\big]$ defined by $v(b/a^n)=v_R(b)/v_R(a)^n$ for all $b\in A$ is a $G$-valuation for which all elements of $R$ are integral.
    Therefore, thanks to \cref{thm: universal G-valuation satisfies a universal property} there exists a unique semiring homomorphism
    \[
    M_R(A[a^{-1}])\longrightarrow M_R(A)\big[v_R(a)^{-1}\big]
    \]
    recovering $v$ from the universal $G$-valuation of~$A[a^{-1}]$.

 Since both maps come from universal properties, one can see that their composition is the identity, concluding the proof.
 \end{proof}

However, given $a_i \in A$ for $i\in I$ such that $\bigcup_{i\in I} \bD(\langle a_i\rangle_R) = \Sp M_R(A)$, the morphism
\begin{equation}\label{eq: sequence with equalizer for universal G-valuation}
    M_R(A) \longrightarrow\operatorname{Eq}\bigg(\prod_{i\in I} M_R(A[a_i^{-1}]) \rightrightarrows \prod _{i,j\in I}  M_R( A[(a_ia_j)^{-1}])\bigg)
\end{equation}
does not need to be an isomorphism.
This means that the presheaf $\mathcal{F}$ is not in general a sheaf on the basis of open sets described in \cref{lem: D for finitely generated submodules}.


\begin{remark} 
This approach ought to be equivalent to the one presented by \cite{Giansiracusa_this_volume} and reinterprets bend relations.
The problem of the morphism in \eqref{eq: sequence with equalizer for universal G-valuation} not being an isomorphism is the same as the globalization one in \cite{Giansiracusa_this_volume}, seen from a different perspective.
\end{remark}

Finally, let us note that the presheaf $\mathcal{F}$ defined here is not in general equal to the presheaf $\mathcal{L}_{M_R(A)}$ defined in \cref{sec: structure sheaf for Sp}.
In fact, their sections can differ on principal open sets, since it can happen that
\[
M_R(A)\big[v_R(a)^{-1}\big]\not\cong \widetilde{S}_{v_R(a)}^{-1}M_R(A)
\]
for some~$a\in A$, as the next example shows.

\begin{example}
Consider the ring $R=K[x,y]$ of polynomials in two variables over a field~$K$, and let~$A=R$.
Then, $M_R(R)$ is the semiring of (finitely generated) ideals of~$R$.
Consider also the polynomial $x$ in~$R$ and the maximal ideal~$\mathfrak{m}=\langle x,y\rangle_R=xR+yR$.

On the one hand, by \cref{lem: D for finitely generated submodules}
\[
\bD(\mathfrak{m})=\bD(\langle x\rangle_R)\cup \bD(\langle y\rangle_R) \supseteq \bD(\langle x\rangle_R)=\bD(v_R(x)).
\]
Therefore, seeing it as an element of~$M_R(R)$, we have $\mathfrak{m}\in \widetilde{S}_{v_R(x)}$ and so its image is invertible in the localization~$\widetilde{S}_{v_R(x)}^{-1}M_R(R)$. 

On the other hand, since the multiplicative submonoid of $M_R(R)$ generated by $v_R(x)$ is saturated, the only invertibles in $M_R(R)\big[v_R(x)^{-1}\big]$ are the images of powers of~$v_R(x)$, because of \cref{lem: Saturation}.
In particular, the image of $\mathfrak{m}$ is not invertible in~$M_R(R)\big[v_R(x)^{-1}\big]$.
\end{example}

\subsection{Globalizing $G$-valuations}\label{subs: global G-valuation}

The constructions from the previous subsections can be generalized to the case of the semischemes defined in \cref{sec: semiring schemes}.

\begin{definition}
    A \emph{$G$-valuation} on a semischeme $(\mathfrak{X},\mathcal{O}_\mathfrak{X})$ is the datum of an idempotently semiringed space $(\mathfrak{S}
    ,\mathcal{V}_\mathfrak{S})$ (meaning, a topological space equipped with a sheaf of idempotent semirings),  
    and of a morphism $v\colon\mathfrak{S}\rightarrow \mathfrak{X}$ such that
    \[
    v^\#(U)\colon\mathcal{O}_{\mathfrak{X}}(U)\to\mathcal{O}_{\mathfrak{S}}(v^{-1}(U))
    \]
    is a $G$-valuation for every open set~$U\subseteq \mathfrak{X}$.
    
    We also say that the $G$-valuation is \emph{$1$-bounded} if for every open set $U\subseteq\mathfrak{X}$ the above map is a 1-bounded $G$-valuation.
\end{definition}

With this definition, given any morphism of semischemes $\mathfrak{X}\rightarrow\mathfrak{Y}$ one can construct a \emph{relative universal $G$-valuation} $v_{\mathfrak{Y}}$ fitting in the diagram

\begin{center}
    \begin{tikzcd}[column sep=small]
    M_{\mathfrak{Y}}(\mathfrak{X}) \arrow{r}{v_{\mathfrak{Y}}}  \arrow{rd}{} 
    & \mathfrak{X} \arrow{d}{} \\
    & \mathfrak{Y}
\end{tikzcd}
\end{center}
with the diagonal arrow being a $1$-bounded $G$-valuation, and satisfying a universal property analogous to~\cref{thm: universal G-valuation satisfies a universal property}.

We refer the interested reader to a forthcoming paper \cite{Xarles} by the last author for the details.

\begin{small}
 \bibliographystyle{alpha}
 \bibliography{ref}
\end{small}

\end{document}